\documentclass[12pt]{amsart}
\usepackage[margin=1.3in]{geometry} 

\usepackage{amsmath, amssymb, amsthm}
\usepackage{graphicx} 
\usepackage{mathrsfs}
\usepackage{mathtools}
\usepackage{xcolor}
\usepackage{ulem}

\usepackage{natbib}
 \usepackage{lineno}
\usepackage{hyperref}
\usepackage{tikz-cd}
\usepackage{yfonts}
\usepackage{bbm}

\newtheorem{theorem}{Theorem}[section]

\newtheorem{lemma}{Lemma}[section]
\newtheorem{example}{Example}[section]
\newtheorem*{remark}{Remark}

\newtheorem{proposition}{Proposition}[section]
\newtheorem*{proposition*}{Proposition}

\newcommand{\ad}{\mathrm{ad}}
\newcommand{\Vir}{\mathrm{Vir}}

\begin{document}

\title{Superconformal Algebras Over Arbitrary Rings of Coefficients}
\author{Consuelo Martínez}
\address{Departamento de Matem\'{a}ticas, Universidad de Oviedo, C/ Calvo Sotelo
s/n, Oviedo, 33007, Spain, and}
\address{Real Academia de Ciencias Exactas, Físicas y Naturales 
de España (RAC). C/ Valverde 22, 28004-Madrid, 
Spain.}
\email{cmartinez@uniovi.es}
\author{Efim Zelmanov}
\address{SICM, Southern University of Science and Technology, Shenzhen, 518055, China.}
\email{efim.zelmanov@gmail.com}
\author{Zezhou Zhang}
\address{Department of Mathematics, Beijing Normal University, Beijing, 100875, China.}
\email{zz2d@bnu.edu.cn}


\begin{abstract}
We define analogs of superconformal algebras over an arbitrary commutative associative superalgebra. In the finitely generated case the universal central extensions of these algebras are finitely presented. In particular, we show that all known superconformal algebras are finitely presented.	\end{abstract}

\setcounter{section}{-1}

\maketitle

\section{Introduction}

Let \( \mathbb{C}[t,t^{-1}] \) be the algebra of Laurent polynomials over the field of complex numbers \( \mathbb{C} \).

By the (centerless) Virasoro algebra, also known as Witt algebra, we mean the Lie algebra \( \mathrm{Vir} = \operatorname{Der} \mathbb{C}[t,t^{-1}] \) of all derivations of the algebra \( \mathbb{C}[t,t^{-1}] \).

The elements \( e_i = -t^{i+1} \frac{\partial}{\partial t} \), \( i \in \mathbb{Z} \), form a basis of the algebra \( \mathrm{Vir} \), where \( [e_i, e_j] = (i - j)e_{i+j} \). The central extension \( \widehat{\mathrm{Vir}} = \sum_{i \in \mathbb{Z}} \mathbb{C} e_i + \mathbb{C} c \), \( [e_i, c] = 0 \), \( [e_i, e_j] = (i - j)e_{i+j} + \delta_{{i+j},0} \frac{i^3-i}{12}c \), is called the Virasoro algebra.

A search for superconformal algebras, i.e., superextensions of the Virasoro algebra with similar properties, started in the works of A. Neveu and J.H. Schwarz \cite{neveu1971}, P. Ramond \cite{ramond1971}, followed by M. Ademolo et al. \cite{ademollo1976}, K. Schoutens \cite{schoutens1987}, A. Schwimmer and N. Seiberg \cite{schwimmer1987}. 

In \cite{kac1989}, V. Kac and J. van de Leur defined a superconformal algebra as:
\[
\text{a } \mathbb{Z} \text{-graded Lie superalgebra } L = L_{\bar{0}} \oplus L_{\bar{1}} = \sum_{i \in \mathbb{Z}} L_i, \text{ such that}
\]
\begin{enumerate}
\item \( L \) is simple,
\item the dimensions \( \dim_{\mathbb{C}} L_i \), \( i \in \mathbb{Z} \), are uniformly bounded,
\item \( \mathrm{Vir} \subseteq L_{\bar{0}} \).
\end{enumerate}


Let \( G(n) \) be the Grassmann algebra on \( n \) Grassmann variables:
\[
G(n) = \mathbb{C}\langle 1, \xi_1, \ldots, \xi_n \mid  \xi_i^2 = 0, \quad \xi_i \xi_j + \xi_j \xi_i = 0, \quad 1 \leqslant i, j \leqslant n\rangle.
\]

Consider the associative commutative superalgebra
\[
\Lambda(1:n) = \mathbb{C}[t, t^{-1}] \otimes_\mathbb{C} G(n).
\]

The only known superconformal algebras are:
\begin{enumerate}
\item \( W(1:n) = \operatorname{Der} \Lambda(1:n) \),
\item \( S(n, \gamma) \),
\item contact superalgebras \( K(1:n) \) of Ramond and Neveu–Schwarz types,
\item the exceptional superconformal algebra \( CK(6) \),
\item twisted contact superalgebras \( K^{(2)}(1:2n) \).
\end{enumerate}

The explicit constructions of these superalgebras in a more general context will be given in Chapter  {\ref{sec:BasicConstrut}}. 

V. Kac and J. van de Leur \cite{kac1989} conjectured that these are the only superconformal algebras. The results in \cite{fattori2002} and \cite{kac2001} partially confirm this conjecture. 


The purpose of this paper is to introduce constructions of superconformal algebras using an arbitrary finitely generated associative commutative superalgebra \( A \) over a field \( F \) of characteristic \( \neq 2 \) (instead of the Laurent polynomials \( \mathbb{C}[t, t^{-1}] \)) \textcolor{.}{ that contains \(\sqrt{-1}\). \footnote{The assumption \(\sqrt{-1} \in F \) only appears substantively in the construction of type \(K\).   }} We study basic algebraic properties of these superalgebras: finite generation and finite presentation.

In Section {2} we prove the following theorem.

\renewcommand{\thetheorem}{\Alph{theorem}}
\setcounter{theorem}{0}

\begin{theorem}\label{theorem:A}\mbox{} Let \(F\) be a field where ${char}(F)\neq 2$ and $\sqrt{-1}\in F$. Let \( A \) be an arbitrary associative commutative superalgebra. Let \(n\) be  an integer. Then the following hold: 

\begin{enumerate}
    \item For \( n \geqslant 2 \), the superalgebras \( W(A:n) \), \( S(\delta,n) \) are perfect; 

\noindent for \( n\geqslant 3 \), the superalgebras \( [K(A:n), K(A:n)] \) and \( [K^{(2)}(A:n), K^{(2)}(A:n)] \) are perfect; 

\noindent for an arbitrary even derivation of \( A \), the superalgebra \( CK(A,d) \) is perfect.

\noindent For a finitely generated superalgebra \( A \), these five kinds of (Lie) superalgebras are finitely generated.

\item If \( A \) is spanned by all values of all (even and odd) derivations, then the conclusions in item (1) hold for the superalgebra \( W(A:1) \). 

\item If \( A \) is a contact algebra with the derivation
\([a, 1] = d(a) \) and
\[
A = \{[a, b] + a \cdot d(b) \mid a, b \in A\},
\]
then the conclusions in item (1)  also hold for the superalgebra \( K(A:2) \).

\item If \( A = A^{(0)} \oplus A^{(1)} \) is a commutative associative superalgebra that is also a \( \mathbb{Z}/2\mathbb{Z} \)-graded contact superalgebra, with the derivation
\([a, 1] = d(a) \) (see section 1.4) and
\[
A = \{[a, b] + a \cdot d(b) \mid a, b \in A\},
\]
then certain assumptions (specified in Theorem \ref{thm:AforKtwisted}) on \( A^{(0)}\) and \( A^{(1)}\) allow 
the conclusions in item (1) to hold for  \( K^{(2)}(A:2) \).


\end{enumerate}

\end{theorem}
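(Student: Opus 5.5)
The strategy is the same for each family: exhibit a copy of a finite-dimensional, essentially simple Lie superalgebra $\mathfrak g_0$ inside the algebra $L$ (e.g. $L=W(A:n)$), then use the $\mathfrak g_0$-module structure of $L$ to express every element as a sum of brackets. For $W(A:n)=\operatorname{Der} A\otimes G(n) \supseteq A\otimes G(n)\,\partial_{\xi_1}+\dots+A\otimes G(n)\,\partial_{\xi_n}$ plus $\operatorname{Der}(A)\otimes G(n)$, the natural choice is the subalgebra $W(n)=\operatorname{Der} G(n)\cong F\otimes W(n)$. For $n\geqslant 2$, $W(n)$ is simple and perfect. I will also use the grading operator $E=\sum_i \xi_i\partial_{\xi_i}$, whose eigenvalues on $a\,\xi_{I}\partial_{\xi_j}$ and $a\,\xi_I D$ ($D\in\operatorname{Der}A$) are $|I|-1$ and $|I|$.

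\textbf{Perfectness of $W(A:n)$.} For a homogeneous element $x$ of degree $k\neq 0$ we have $x=\frac1k[E,x]$. In degree $0$, I write $a\,\xi_i\partial_{\xi_j}$ with $i\ne j$ as $[a\,\xi_i\partial_{\xi_k}\,,\,\xi_k\partial_{\xi_j}]$ up to correction terms. Since $n\geqslant2$, there is room for index juggling: $a\partial_{\xi_j}$ equals $[\partial_{\xi_i}, a\,\xi_i\partial_{\xi_j}]$ for $i\neq j$, and $a\,\xi_i\partial_{\xi_i}$ follows similarly. The remaining degree-$0$ part is $\operatorname{Der}(A)\otimes 1$, and $D=[\partial_{\xi_1},\xi_1 D]$. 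Hence $W(A:n)=[W,W]$. The same bracket identities work for $S(\delta,n)$ once I restrict to divergence-free combinations, and for $[K,K]$, $[K^{(2)},K^{(2)}]$ and $CK(A,d)$. There the contact bracket replaces composition: the role of $E$ is played by the grading element (bracketing with $t$-like element $1$ or $\xi_i\xi_j$ pairs), and $\mathfrak{so}(n)$, spanned by the $\xi_i\xi_j$, acts with nonzero weights once $n\geqslant 3$ (or via the fixed structure of $CK(6)$). Here $\sqrt{-1}$ is used to diagonalize $\mathfrak{so}(n)$ by switching to $\xi_i\pm\sqrt{-1}\xi_j$.

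\textbf{Finite generation.} If $A$ is generated by $a_1,\dots,a_m$, the perfectness formulas show that $L$ is generated by $\mathfrak g_0$ together with the finitely many elements $a_i\cdot u$, where $u$ runs over a basis of $\mathfrak g_0$. The key point is that products $ab$ arise from brackets such as $[a\,\xi_1\partial_{\xi_2}, b\,\xi_2\partial_{\xi_2}]$, which contain $ab\,\xi_1\partial_{\xi_2}$ plus terms where derivations act on $A$. I therefore argue by induction on word length in the $a_i$, with $\mathfrak g_0$ moving the products to arbitrary Grassmann positions. The derivation parts $\operatorname{Der}(A)$ are not directly finitely generated, so this step needs care. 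I would show that $[a\partial_{\xi_1}, \xi_1 D]=aD$, which reduces the question to generators $D_1,\dots,D_r$ of $\operatorname{Der}A$ as an $A$-module. For finitely generated $A$ I expect this module to be finitely generated, e.g. by an embedding into $A^m$ via $D\mapsto(D(a_i))$, or else by appealing to the definitions in Section~1.

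\textbf{Items (2)–(4).} For $n=1$ (or $n=2$ in the contact case), $\mathfrak g_0$ is too small and the $E$-trick fails in degree $0$. The hypotheses replace it. If $A$ is spanned by values $D(a)$, then $D(a)\cdot D'$ can be produced as a commutator via $[D', aD'']$-type identities. In the contact case, $A=\{[a,b]+a\,d(b)\}$ is exactly the statement that the degree-$0$ component equals the bracket of degree $\pm$ components. I expect the main obstacle to be the degree-zero bookkeeping in these low-rank and twisted cases, particularly $K^{(2)}(A:2)$, where the parities $A^{(0)}$ and $A^{(1)}$ must be tracked through the twisted bracket and the stated additional assumptions.
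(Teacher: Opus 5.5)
\paragraph{The main gap.} Your proposal gives an actual argument only for $W(A:n)$. For $S(\delta,n)$, $[K,K]$ and $[K^{(2)},K^{(2)}]$ it only asserts that ``the same bracket identities work'', and the mechanism you rely on is not available there.

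\emph{The grading trick does not transfer.} The trick $x=\frac{1}{k}[E,x]$ fails in both families:
\begin{itemize}
\item In $S$: $\delta^{(n)}(\sum_i\xi_i\partial_{\xi_i})=-n$, so $E\notin\widetilde{S}(\delta,n)$ unless $n=0$ in $F$.
\item In $K(A:n)$: one has $[a\xi_X,1]=d(a)\xi_X$, so for a general contact algebra $A$ there is no analogue of $E$.
\end{itemize}
What remains is the Cartan subalgebra $H=\operatorname{Span}(\zeta_i\eta_i)$ of $\mathfrak{so}(n)$. Its zero-weight space contains the top component $A\xi_1\cdots\xi_n$, which is exactly where the difficulty lies.

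\emph{You never determine what must be proved.} The objects in the statement are derived algebras, $[K,K]$ and $S(\delta,n)=[\widetilde{S},\widetilde{S}]$. All the work lies in their top Grassmann components, which are in general smaller than those of $K$ and $\widetilde{S}$. For instance, $[K,K]=\sum_{r<n}A\,V^r+M_n\xi_1\cdots\xi_n$ with $M_n=\operatorname{Span}\{[a,b]+\frac{n}{2}a\,d(b)\}$. For $A=F[t,t^{-1}]$ with $[t^i,t^j]=(i-j)t^{i+j}$ one gets $1\notin M_4$, so $[K,K]\neq K$.

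\emph{The missing steps for $K$.} The heart of the matter is that $M_n\xi_1\cdots\xi_n$ is spanned by brackets $[a\xi_{X'},b\xi_{X''}]$ with $X'\sqcup X''=\{1,\dots,n\}$ and both parts proper. The paper gets this from the identity $[a,b]+\frac{1}{2}(|X'|a\,d(b)-|X''|d(a)b)=[a,b]+\frac{n}{2}a\,d(b)-\frac{|X''|}{2}d(ab)$. It combines this with $d(A)\xi_1\cdots\xi_n\subseteq[[K,K],[K,K]]$, obtained by specializing $b=1$ ($n$ even, $|X'|,|X''|$ odd) or $a=1$ ($n$ odd, $X'=\{1\}$). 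The lower components come from $[a\xi_{X'\cup\{i\}},b\xi_{X''\cup\{i\}}]=\pm ab\,\xi_X$ and from $H$-weights. The same ``going up'' identity gives finite generation.

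\emph{The missing steps for $S$ and $K^{(2)}$.}
\begin{itemize}
\item For $S(\delta,n)$ you must show that $S$ is generated by the nonzero root spaces $S^*$. These lie in $[S,S]$ because $H\subseteq S$. You must also show $\xi_1\cdots\xi_n[S(\delta),S(\delta)]\subseteq[S^*,S^*]$.
\item For $K^{(2)}(A:n)$ the derived algebra differs in the two components $\xi_1\cdots\xi_n$ and $\xi_1\cdots\xi_{n-1}$. The $K$-argument cannot be copied, because $1\notin A^{(1)}$.
\item For $n=2$, the hypotheses of item (4), namely $M_2^{(1)}=A^{(1)}$ and $M_1^{(0,0)}+A^{(1)}A^{(1)}=A^{(0)}$, say exactly that these two components are full.
\end{itemize}

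\paragraph{Problems even for $W$.} There are two.
\begin{itemize}
\item \emph{Characteristic.} Dividing by the degree $k$ needs $k\neq0$ in $F$, but the theorem allows any characteristic $p\neq 2$. For $p\leqslant n$ there are components whose degree is divisible by $p$, e.g.\ $a\,\xi_1\xi_2\xi_3D$ in characteristic $3$. The paper uses no grading at all, only the characteristic-free identities (6) and (7), which you could adopt.
\item \emph{$W=W(A:1)$.} Your remark targets the wrong component. $\operatorname{Der}A=[\partial_\xi,\xi\operatorname{Der}A]$ is never the issue; the obstruction is $A\xi\partial_\xi$. What gives $[W,W]\cap A\xi\partial_\xi=A^{\mathcal D}\xi\partial_\xi$ is precisely the correction term in $[\xi D,a\partial_\xi]=\pm aD\pm\xi D(a)\partial_\xi$. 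This is the same term you dropped when writing $[a\partial_{\xi_1},\xi_1D]=aD$.
\end{itemize}

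\paragraph{What works.} Your proof that $\operatorname{Der}A$ is a finitely generated module, via the injective map $D\mapsto(D(a_i))_i\in A^m$, is sound. It works because $A$ is a finitely generated module over the Noetherian ring $A_{\bar 0}$. The paper instead cites Billig--Futorny for this.
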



In Section {3} we discuss universal central extensions of Lie superalgebras.

In Section {4}, Section {5} we show that for a finitely generated associative commutative superalgebra \( A \), the universal central extensions of all  superalgebras in Theorem \ref{theorem:A} are finitely presented.

\begin{theorem}\label{theorem:B}
Let \( A \) be a finitely generated associative commutative superalgebra over a ground field of characteristic \( \neq 2 \). Under the assumptions of Theorem  \ref{theorem:A}, the universal central extensions of superalgebras \( W(A:n) \), \( S(\delta,n)  \), \( [K(A:n), K(A:n)] \), \( [K^{(2)}(A:n), K^{(2)}(A:n)] \), \( CK(A,d) \) are finitely presented.

Separately, we show that for a field \( F \) of zero characteristic the superalgebra 
\( K^{(2)}(F[t, t^{-1}]:2) \) is finitely presented.
\end{theorem}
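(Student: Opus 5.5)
Throughout, $L$ denotes one of the superalgebras in Theorem \ref{theorem:A}, which by that theorem is perfect and finitely generated when $A$ is finitely generated. The plan rests on a general principle, to be established in Section 3. If $L$ is a perfect Lie superalgebra, its universal central extension $\widehat{L}$ exists and is again perfect. It can be written as $\widehat L=\widetilde L/I$, where $\widetilde L$ is a free Lie superalgebra on generators $X$ of $L$ and $I$ is generated by the relations of $L$ commutated once more. Consequently, to show that $\widehat L$ is finitely presented it suffices to find finitely many relations $R$ among finitely many generators with two properties. First, the Lie superalgebra $\Gamma=\langle X\mid R\rangle$ maps onto $L$. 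Second, the kernel of $\Gamma\to L$ is central in $\Gamma$ and $\Gamma$ is perfect. Then $\Gamma$ is a perfect central extension, so $\widehat L$ maps onto $\Gamma$, while the universal property gives $\widehat L\to\Gamma$ splitting. This forces $\widehat L\cong\Gamma$ (using that a perfect central extension of $\widehat L$ is trivial). The whole proof therefore reduces to producing, for each type, a finite presentation of some central cover.

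For each family I would exploit the grading by a distinguished semisimple element. Examples are $\sum_i \xi_i\partial_i$ in $W(A:n)$ and its analogues in $S$, $K$, $K^{(2)}$ and $CK$, or the copy of $\mathfrak{sl}_2$, $\mathfrak{osp}$ or $\mathfrak{gl}_n$ acting on the Grassmann variables. Let $a_1,\dots,a_m$ be generators of $A$. I would take as generators of $\Gamma$ finitely many elements of low degree in the Grassmann variables with coefficients in $\{1,a_1,\dots,a_m\}$, enough to generate $L$ by Theorem \ref{theorem:A}. The relations would be the following finite list.

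\begin{enumerate}
\item The multiplication table of the finite-dimensional "constant" subalgebra (e.g.\ $W(n)$ or $\mathfrak{osp}$-type pieces).
\item Relations saying that each generator $a_i\otimes v$ spans a module of the prescribed type over this subalgebra.
\item Finitely many bilinear relations $[a_i\otimes u, a_j\otimes v]=\dots$ in low degrees, encoding commutativity and associativity of $A$ together with the Leibniz rule.
\end{enumerate}

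The key step is to show that in $\Gamma$ the span $\Gamma'$ of all brackets decomposes as "$A\otimes(\text{module})$" plus a central part. I would first define elements $a\otimes v$ for arbitrary monomials $a$ in the $a_i$ by iterated brackets with degree-raising/lowering constant elements. Using the module relations, the result is independent of the chosen bracketing, and $a\mapsto a\otimes v$ descends to an $F$-linear map from $A$. Because $A$ is only finitely generated, not finitely presented, the relations of $A$ are not needed explicitly. They should hold automatically modulo the center. The hope is that a relation $f(a_1,\dots,a_m)=0$ in $A$ gives an element of $\Gamma$ annihilated by the adjoint action of the constant subalgebra and of the generators, hence central.

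The main obstacle I expect is exactly this: proving that the kernel of $\Gamma\to L$ is central, i.e.\ that $[\Gamma,\ker]=0$. This is a Steinberg-group-type argument. I would try to prove that the zero-weight (with respect to the grading element) component of $\Gamma$ is spanned by brackets of the generators with elements of nonzero weight. Then the kernel, which must lie in weight zero after applying the grading, commutes with everything by a Jacobi-identity computation using the relations of type (2). The delicate cases are small $n$ ($W(A:1)$, $K(A:2)$, $K^{(2)}(A:2)$), where there is too little room in the Grassmann part. There I would use the hypotheses of Theorem \ref{theorem:A}(2)--(4), writing $A$ as spanned by values of derivations or by $[a,b]+a\,d(b)$, to manufacture the needed bracket expressions.

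Finally, for $K^{(2)}(F[t,t^{-1}]:2)$ in characteristic zero, I would argue directly. This algebra is $\mathbb{Z}$-graded with finite-dimensional components and contains a Virasoro subalgebra. Presentations of $\mathrm{Vir}$ by finitely many elements $e_{-2},\dots,e_2$ are classical. I would extend them by adding the odd generators of degrees $\pm\tfrac12,\pm\tfrac32$ together with finitely many relations, and check degree by degree that the resulting algebra has the right dimension in each degree. Its second homology vanishes in the appropriate sense, so no central element appears.
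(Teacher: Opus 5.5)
Your outline stalls exactly at the step you call the main obstacle, and the one concrete idea you offer there is false.

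First, a finitely generated supercommutative algebra over a field \emph{is} finitely presented: it is a quotient of $F[x_1,\dots,x_p]\otimes G(q)$, which is Noetherian. So there is nothing to gain by avoiding the relations of $A$. Second, those relations do \emph{not} hold automatically modulo the center. Suppose $f(a_1,\dots,a_m)=0$ in $A$ and $v$ has nonzero weight $\lambda$ for your grading element $h$ (e.g.\ $v=\partial/\partial\xi_2$ or $v=\zeta_1$). Then the element $f\otimes v\in\Gamma$ built by iterated brackets satisfies $[h,f\otimes v]=\lambda\, f\otimes v$, so it is central only if it is already $0$. Hence, for $\Gamma\to L$ to have central kernel, infinitely many identities must already hold in $\Gamma$, i.e.\ follow from your finite list. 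These are the relations of $A$, together with the commutativity, associativity and Leibniz identities for products of arbitrary length that you need to make $a\mapsto a\otimes v$ well defined. Your claim that the kernel ``must lie in weight zero'' presupposes exactly this, and the proposal gives no mechanism for it.

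The paper's mechanism runs as follows.
\begin{enumerate}
\item Pick $h$ with $\ad h$-eigenvalues in $[-m,m]$ and set $V=U_{\bar0}h\subset L_0$. On each $L_i$ the operators $\ad(u'h)\ad(u''h)-i\,\ad(u'u''h)$ then span a square-zero space $S_i$; in other words, $\ad(uh)$ acts as multiplication by $iu$ up to a nilpotent correction.
\item Each $L_i$ is a finitely generated, hence finitely presented, module over the Noetherian algebra $\mathcal{A}$ generated by $\ad V$. Relation (R7) thus encodes $A$ with finitely many relations.
\item The polarization identities of Lemmas 4.1--4.2 and $[V,[V,L_0]]=(0)$ drive an induction on the number of $\ad V$ factors. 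From the finitely many base cases (R3)--(R4) this derives $[\mathcal{A}X_i,\mathcal{A}X_j]\subseteq\mathcal{A}X_{i+j}$ and $[[\mathcal{A}X_i,\mathcal{A}X_{-i}],\mathcal{A}X_j]\subseteq\mathcal{A}X_j$.
\end{enumerate}
Only after that does the kernel land in degree $0$ and become central.

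Your reduction in the first paragraph also runs backwards. If $\Gamma$ is a perfect central extension of $L$, the surjection $\widehat L\to\Gamma$ makes $\widehat L$ a central extension of $\Gamma$. This does not yield $\widehat L\cong\Gamma$: take $\Gamma=L=\Vir$, which is perfect and finitely presented with $H_2\neq0$. The reduction can be repaired in either of two ways:
\begin{enumerate}
\item For a finitely presented perfect $\Gamma=F/R$, the space $R/[F,R]$ is spanned by the images of the finitely many relators, so $\widehat\Gamma=\widehat L$ is again finitely presented.
\item As in the paper, ensure the chosen relations hold in $\widehat L$ itself. The presentation $\widehat L=\langle X\mid R^*=(0)\rangle$ of Section 3 guarantees this for relations of nonzero $h$-degree.
\end{enumerate}

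Finally, for $K^{(2)}(F[t,t^{-1}]:2)$ you cannot invoke vanishing second homology. This is the twisted $N=2$ algebra, which carries the usual nontrivial central charge. What is needed is a direct proof that finitely many relations imply all the others. The paper derives the infinite families (28)--(36) inductively from relations of degree $\leqslant 5$ and the finite presentation of $\Vir(2)$, and then checks that the span (27) is closed under brackets.
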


D.B. Fairlie, J. Myers and C.K. Zachos \cite{fairlie1988} proved that the Neveu–Schwarz and the Ramond superalgebras are finitely presented. V. Kac and J. van de Leur \cite{kac1989} computed the second cohomology spaces for all known superconformal algebras. In particular, they showed that these spaces are finite dimensional.

These results, together with  Theorem \ref{theorem:B} and Theorem \ref{thm5.13manuscript} imply:

\begin{theorem}\label{theorem:C}
All known (Kac-Van de Leur) superconformal algebras are finitely presented.
\end{theorem}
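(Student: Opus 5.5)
The plan is to reduce Theorem \ref{theorem:C} to Theorem \ref{theorem:B} (together with the separate result on \(K^{(2)}(F[t,t^{-1}]:2)\)) and a general lemma relating finite presentation of a perfect Lie superalgebra to that of its universal central extension. Take \(A=\mathbb{C}[t,t^{-1}]\). It is finitely generated, and for the right choice of \(n\), \(\delta\) and \(d = \frac{\partial}{\partial t}\) it recovers each superconformal algebra of the Kac--van de Leur list. Namely, \(W(A:n)\) gives \(W(1:n)\), \(S(\delta,n)\) gives \(S(n,\gamma)\), \([K(A:n),K(A:n)]\) gives \(K(1:n)\) (Ramond and Neveu--Schwarz, by choice of the grading of the odd generators), \(CK(A,d)\) gives \(CK(6)\), and \([K^{(2)}(A:n),K^{(2)}(A:n)]\) gives \(K^{(2)}(1:2n)\).

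First I check the hypotheses of Theorem \ref{theorem:A} in the small-\(n\) cases.
\begin{itemize}
\item For \(W(1:1)\), \(A\) is spanned by values of the derivation \(\partial/\partial t\), since \(\partial_t(t^{k+1}/(k+1))=t^k\) for \(k\neq -1\). However \(t^{-1}\) is not such a value, so I instead use the odd derivations \(\xi\partial_t,\partial_\xi\) on \(\Lambda(1:1)\), or else treat \(W(1:1)\) directly by hand.
\item For \(K(A:2)\), the condition \(A=\{[a,b]+a\,d(b)\}\) is verified by an explicit computation with \(a=t^i\), \(b=t^j\).
\item For \(K^{(2)}(F[t,t^{-1}]:2)\), I invoke the separate statement in Theorem \ref{theorem:B}.
\end{itemize}
Theorem \ref{theorem:B} then yields that the universal central extension \(\widehat{L}\) of each such \(L\) is finitely presented.

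Next I pass from \(\widehat{L}\) to \(L\) itself. By Kac--van de Leur, \(H^2(L,\mathbb{C})\) is finite dimensional, so the kernel \(Z\) of \(\widehat{L}\to L\) is finite dimensional (it is dual to \(H^2\) for perfect \(L\)). Writing \(\widehat L = \langle X\mid R\rangle\) with \(X,R\) finite, we get \(L=\langle X\mid R\cup\{z_1,\dots,z_m\}\rangle\), where the \(z_i\) are Lie words in \(X\) representing a basis of \(Z\). Hence \(L\) is finitely presented. Finite presentation is meant here in the category of Lie superalgebras, and a finite set of relations is added. If the centrally extended versions (e.g.\ Virasoro-type with central charge) are also meant, they are intermediate quotients \(\widehat L/Z'\) and are handled the same way.

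The main obstacle is the identification step. I must make sure each algebra in the list, with its specific parameters (\(\gamma\) versus \(\delta\), the twisted \(K^{(2)}\), and \(CK(6)\) as \(CK(A,d)\)), is exactly one of the constructions from Section \ref{sec:BasicConstrut} with \(A\) Laurent polynomials. I also need the low-rank cases \(n=1,2\) to satisfy the extra hypotheses, and Fairlie--Myers--Zachos covers the Neveu--Schwarz and Ramond cases of \(K(1:1)\) not included in Theorem \ref{theorem:A}.
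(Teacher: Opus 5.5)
Your proposal follows the same route as the paper. Theorem B is applied with $A=\mathbb{C}[t,t^{-1}]$, and Kac--van de Leur's finite-dimensionality of $H^2$ makes the central kernel of $\widehat L\to L$ finite dimensional, so finitely many extra relations present $L$. Fairlie--Myers--Zachos covers the Neveu--Schwarz and Ramond algebras, and Theorem~\ref{thm5.13manuscript} covers $K^{(2)}(F[t,t^{-1}]:2)$.

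The one sub-step that is wrong as written is the $W(1:1)$ check, though it is easily repaired. The hypothesis of Theorem A(2) asks that $A$ be spanned by the values of \emph{all} derivations of $A$, not of $\partial/\partial t$ alone. Since $\operatorname{Der}A=A\,\partial/\partial t$, you have $(t^{k}\partial/\partial t)(t)=t^{k}$ for every $k\in\mathbb{Z}$; equivalently, $A^{\mathcal{D}}$ is an ideal containing $\partial_t(t)=1$. So $A^{\mathcal{D}}=A$ at once, and $t^{-1}$ is no obstruction. Your suggested workaround through $\xi\,\partial_t$ and $\partial_\xi$ would not help anyway. Those are derivations of $\Lambda(1:1)$, whereas the condition concerns $\operatorname{Der}A$, and the purely even algebra $A$ has no nonzero odd derivations.

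Two smaller inaccuracies in the identification step:
\begin{itemize}
\item The Neveu--Schwarz and Ramond versions of $K(1:n)$ come from the two contact brackets $f'g-fg'$ and $t(f'g-fg')$ on $A$. They do not come from a choice of grading on the odd variables.
\item The twisted family must be built on the Ramond bracket, since only for it is $F[t^2,t^{-2}]\oplus tF[t^2,t^{-2}]$ a $\mathbb{Z}/2\mathbb{Z}$-grading. Also, $K^{(2)}(1:2n)$ corresponds to $K^{(2)}(A:2n)$, not $K^{(2)}(A:n)$. With that indexing, only $2n=2$ lies outside the range $n\geqslant 3$ covered by Theorem A.
\end{itemize}
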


Consider the polynomial algebra \( F[p_1, q_1, \ldots, p_m, q_m] \) with the classical Poisson bracket
\[
[f(p_i, q_j), g(p_i, q_j)] = \sum_{i=1}^m \left( \frac{\partial f}{\partial p_i} \frac{\partial g}{\partial q_i} - \frac{\partial f}{\partial q_i} \frac{\partial g}{\partial p_i} \right).
\]
It gives rise to the Hamiltonian superalgebra
\[
H(2m, n) = K(F[p_1, q_1, \ldots, p_m, q_m]: n) = F[p_1, q_1, \ldots, p_m, q_m] \otimes G(n).
\]

In comparison, the polynomial algebra \( F[p_1, q_1, \ldots, p_m, q_m, t] \) with the contact bracket
\[
[f, g] = \sum_{i=1}^m \left( \frac{\partial f}{\partial p_i} \frac{\partial g}{\partial q_i} - \frac{\partial f}{\partial q_i} \frac{\partial g}{\partial p_i} \right) + \frac{t}{2} \left( \frac{\partial f}{\partial t} g - \frac{\partial g}{\partial t} f \right).
\] gives rise to the contact superalgebra
\[
K(2m+1, n) = K(F[p_1, q_1, \ldots, p_m, q_m, t]: n).
\]

Theorem \ref{theorem:B} and the results of \cite{martinez2025} on cyclic homology imply:

\begin{theorem}\label{theorem:D}{~}
For arbitrary \( m, n \geqslant 1 \), the superalgebra \( H(2m, n) \) is finitely presented.
\end{theorem}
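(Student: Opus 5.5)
The plan is to reduce Theorem \ref{theorem:D} to Theorem \ref{theorem:B} by way of the universal central extension. Take \(A = F[p_1, q_1, \ldots, p_m, q_m]\) with the classical Poisson bracket. Viewed as a contact algebra, \(d(a) = [a,1] = 0\), so \(d\) is the zero derivation. \(A\) is finitely generated as an associative commutative algebra, and \(H(2m,n) = K(A:n)\).

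First I would verify the hypotheses of Theorem \ref{theorem:A} in this setting. For \(n \geqslant 3\), item (1) gives that \(L = [K(A:n),K(A:n)]\) is perfect and finitely generated. For \(n = 1, 2\), I would instead check directly that \(A = \{[a,b] + a\,d(b)\} = [A,A]\); this holds because every monomial is a bracket, e.g.
\[
p^\alpha q^\beta = \frac{1}{\beta_1+1}\,\bigl[p_1,\ p^\alpha q_1^{\beta_1+1}\cdots\bigr].
\]
I also need to show that \(K(A:n)\) itself equals its derived algebra, so that \(L = H(2m,n)\). The constants \(1 \otimes 1\) lie in \([A,A]\) as \([p_1,q_1]\), and the Grassmann parts are handled by brackets with \(p_i, q_i\). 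Since \(m \geqslant 1\), I expect this to go through, perhaps up to a finite-dimensional quotient in the top Grassmann degree. If such a quotient appears, finite presentation of the derived algebra still transfers, because a finite-codimension ideal extension of a finitely presented algebra is finitely presented.

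Next, Theorem \ref{theorem:B} says the universal central extension \(\widehat L\) is finitely presented. Since \(L\) is perfect, \(L = \widehat L / Z\), where the kernel \(Z\) is identified with \(H_2(L)\). A quotient of a finitely presented algebra by a finitely generated ideal is again finitely presented. It therefore suffices to show that \(Z\) is finite dimensional, or at least finitely generated as an ideal; since \(Z\) is central, this means finite dimensional.

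The main obstacle is this last step, namely computing \(H_2(H(2m,n))\). Here I would invoke \cite{martinez2025}, which expresses the second homology of \(K(A:n)\) in terms of the cyclic homology of the Poisson algebra \(A\) together with a bounded contribution from \(G(n)\). For the polynomial algebra, cyclic and de Rham-type homology are finite dimensional in each degree by the Poincaré lemma, so \(H_2\) is finite dimensional. Lifting a basis of \(Z\) to \(\widehat L\) and adding these finitely many central elements as relations then yields a finite presentation of \(H(2m,n)\).
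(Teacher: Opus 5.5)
For $n\geqslant 2$ your plan matches the paper's one-line deduction of Theorem~\ref{theorem:D}. Since $d=0$, your computation $[p_1,g]=\partial g/\partial q_1$ gives $M_n=[A,A]=A$. This uses characteristic $0$, which Theorem~\ref{theorem:D} tacitly needs: in characteristic $p$, $[A,A]\subseteq\sum_i(\partial_{p_i}A+\partial_{q_i}A)$ has infinite codimension. Consequently Lemma~\ref{lem: KperfFirstLemma} gives $K(A:n)=[K(A:n),K(A:n)]$ outright, and your hedge about a top-degree quotient is unnecessary. Then Theorem~\ref{theorem:B} makes $\widehat L$ finitely presented. Finite dimensionality of $\ker(\widehat L\to L)=H_2(L)$, taken from \cite{martinez2025}, lets you add finitely many central relations.

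The genuine gap is $n=1$. Checking $A=[A,A]$ brings $K(A:2)$ under item~(3) of Theorem~\ref{theorem:A}, but Theorems~\ref{theorem:A} and \ref{theorem:B} say nothing about $K(A:1)$. Nor can the method of Section~\ref{sec:CondFinPresUCE} be applied to it. Here $L_{\bar 0}=A$, and for $h\in A$ the operator $\operatorname{ad}(h)$ is a derivation of the domain $A$; it acts on $A\xi$ by $[h,a\xi]=[h,a]\xi$. So its eigenvalues form an additive monoid, and if there are finitely many of them they are all $0$. Then $L$ is not generated by nonzero eigenspaces, so no admissible $h$ exists. The paper's one-line deduction from Theorem~\ref{theorem:B} does not visibly cover $n=1$ either, so this case needs a separate argument.

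Separately, your gloss on \cite{martinez2025} is wrong as stated. Ordinary cyclic homology of a polynomial ring is not finite dimensional: $HC_0(A)=A$, and $HC_k(A)\supseteq\Omega^k_A/d\Omega^{k-1}_A$ is infinite dimensional for $k<2m$. The Poincar\'e lemma gives finiteness of de Rham cohomology, equivalently of the Poisson homology of the symplectic algebra $A$. Since $\dim H_2(L)<\infty$ is a cited black box in both your plan and the paper, this affects only your justification, not the structure.
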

\renewcommand{\thetheorem}{\thesection.\arabic{theorem}}

\section{Basic Constructions}\label{sec:BasicConstrut}

\subsection{The Superalgebras \( W(A:n) \)}\mbox{}\\

From now on, all vector spaces are considered over a field \( F \) of characteristic \( \neq 2 \).

Let \( A \) be an associative commutative superalgebra with 1. The Grassmann algebra
\[
G(n) = F\langle 1, \xi_1, \ldots, \xi_n \mid \xi_i^2 = 0, \xi_i \xi_j + \xi_j \xi_i = 0, 1 \leqslant i, j \leqslant n \rangle
\]
is viewed as an associative commutative superalgebra \( G(n) = G(n)_{\bar{0}} \oplus G(n)_{\bar{1}} \), where
\[
\xi_{i_1} \cdots \xi_{i_k} \in G(n)_{\bar{\iota}} \quad \text{for } k \equiv \iota \pmod{2}.
\]

Consider the associative commutative superalgebra
\[
\Lambda = \Lambda(A:n) = A \bigotimes_F G(n).
\]
If \( n = 0 \) then \( \Lambda = A \).

Consider the Lie superalgebra
\[
W(A:n) = \operatorname{Der} \Lambda(A:n).
\]
It is easy to see that \(\Lambda \operatorname{Der} \Lambda \subset \operatorname{Der} \Lambda\). In addition, an arbitrary derivation \( d \in \operatorname{Der} A \) extends to a derivation of \( \Lambda \) via \( d(\xi_i) = 0 \), \( 1 \leqslant i \leqslant n \). Hence, \( \operatorname{Der}A \subseteq \operatorname{Der}\Lambda. \)

 We have
\[
L = \operatorname{Der} \Lambda = \Lambda \operatorname{Der} A + \sum_{i=1}^n \Lambda \frac{\partial}{\partial \xi_i}.
\]

If \( n \geqslant 2 \), \( 1 \leqslant i \neq j \leqslant n \), then the even derivations
\[
e = \xi_i \frac{\partial}{\partial \xi_j}, \quad f = \xi_j \frac{\partial}{\partial \xi_i}, \quad h = \xi_i \frac{\partial}{\partial \xi_i} - \xi_j \frac{\partial}{\partial \xi_j}
\]
form an \( \mathfrak{sl}(2) \)-triple, i.e., \( [e,f] = h \), \( [h,e] = 2e \), \( [h,f] = -2f \).

We call the abelian subalgebra
\[
H = \operatorname{Span}_F \left( \xi_i \frac{\partial}{\partial \xi_i} - \xi_j \frac{\partial}{\partial \xi_j} : 1 \leqslant i,j \leqslant n \right)
\]
the Cartan subalgebra of \( L \). The superalgebra \( L \) decomposes as a direct sum of eigenspaces relative to the action of \( H \):
\[
L = L_0 + \sum_{0 \neq \alpha \in H^*} L_{\alpha},
\]
\[ \mbox{ where }
L_0 = C_L(H) = {\color{.}(F\cdot 1 + {\xi}_1 \cdots {\xi}_n)\operatorname{Der} A} + \sum_{i=1}^{n} A \xi_i \frac{\partial}{\partial \xi_i},
\]
and the subspace \( L_{\alpha} \) is the eigenspace that corresponds to the eigenfunctional \( 0 \neq \alpha \in H^* \). We call the set
\[
\Delta = \{ 0 \neq \alpha \in H^* \mid L_{\alpha} \neq (0) \}
\]
the \textit{root system} of the superalgebra \( L \).

Let \( \widetilde{H} = \operatorname{Span}_F (\xi_i \frac{\partial}{\partial \xi_i} \mid 1 \leqslant i \leqslant n) \), \( H \subset \widetilde{H} \).

Let \( \widetilde{\omega}_i \in \widetilde{H}^* \) be the linear functional such that
\[
\langle \widetilde{\omega}_i, \xi_j \frac{\partial}{\partial \xi_j} \rangle = \delta_{ij}, \quad 1 \leqslant i,j \leqslant n.
\]

Let \( \omega_i \) be the restriction of the linear functional \( \widetilde{\omega}_i \) to \( H \). Then
\begin{align*}
\Delta =& \{ \omega_{i_1} + \cdots + \omega_{i_k}, 1 \leqslant i_1 < \cdots < i_k \leqslant n, 1 \leqslant k ~{\color{.}\lneq}~ n \}\\
        &\bigcup \{ \omega_{i_1} + \cdots + \omega_{i_k} - \omega_j, 1 \leqslant i_1 < \cdots < i_k \leqslant n, j \notin \{ i_1, \ldots, i_k \}, k \geqslant 0 \},\\
&L_{\omega_{i_1} + \cdots + \omega_{i_k}} = \xi_{i_1} \cdots \xi_{i_k} \operatorname{Der} A + \sum_{i=1}^n A \xi_{i_1} \cdots \xi_{i_k} \xi_i \frac{\partial}{\partial \xi_i},\\
&L_{\omega_{i_1} + \cdots + \omega_{i_k} - \omega_j} = A \xi_{i_1} \cdots \xi_{i_k} \frac{\partial}{\partial \xi_j}.
\end{align*} 

For \( n = 1 \) the Lie algebra \( L = W(A:1) \) decomposes into the sum of eigenspaces relative to the action of the operator \( \operatorname{ad}(\xi_1 \frac{\partial}{\partial \xi_1}) \),
\[
L = A \frac{\partial}{\partial \xi_1} + (\operatorname{Der} A + A \xi_1 \frac{\partial}{\partial \xi_1}) + \xi_1 \operatorname{Der} A.
\]

\subsection{The Superalgebras \( S(\delta, n) \)}\label{subsec:BasicConstrut-S}\mbox{}\\

Let \( A \) be an associative commutative superalgebra. We view \( A \) as a module over the Lie superalgebra \( \operatorname{Der} A \), given by \( [d, a] = d(a) \), \( d \in \operatorname{Der} A \), \( a \in A \).

An \textbf{even} linear mapping \( \delta : \operatorname{Der} A \to A \) is called a \textit{divergence} if

1) \( \delta \) is a 1-cocycle, i.e. \( \delta([d_1, d_2]) = [\delta(d_1), d_2] + [d_1, \delta(d_2)] \) for arbitrary derivations \( d_1, d_2 \in \operatorname{Der} A \);

2) \( \delta(a d) = a \delta(d) + (-1)^{|a| \cdot |d|} d(a) \).

\begin{remark}
In \cite{kosmann2002} such an operator \( \delta \) is referred to as divergence with zero curvature.
\end{remark}

\begin{example}
Let \( \Lambda(m:n) = F[x_1, x_1^{-1}, \ldots, x_m, x_m^{-1}] \otimes G(n) \),
\[
d = \sum_{i=1}^m f_i \frac{\partial}{\partial x_i} + \sum_{j=1}^n h_j \frac{\partial}{\partial \xi_j}; \quad f_i, h_j \in \Lambda(m:n).
\]
Then
\[
\operatorname{div}(d) = \sum_{i=1}^m \frac{\partial f_i}{\partial x_i} + \sum_{j=1}^n (-1)^{|h_j|} \frac{\partial h_j}{\partial \xi_j}
\]
is a divergence.
\end{example}

\begin{example}
Let \( \delta : \operatorname{Der} A \to A \) be a divergence. Fix an element \( a \in A_{\bar{0}} \). Then \( \delta_1(d) = d(a) + \delta(d) \) is a divergence as well.
\end{example}

\begin{example}
Let \( \delta : \operatorname{Der} A \to A \) be a divergence. Fix an invertible element \( b \in A_{\bar{0}} \). Then
\[
\delta_2(d) = \frac{1}{b} \delta(b d)
\]
is a divergence as well.
\end{example} 




Let \( (A, \delta) \) be an associative commutative superalgebra with divergence. Consider the superalgebra
\[
\Lambda = \Lambda(A:n) = A \otimes_F G(n), \quad n \geqslant 2.
\]
As we have seen above, \( \operatorname{Der} \Lambda = G(n) \operatorname{Der} A + \sum_{i=1}^n \Lambda \frac{\partial}{\partial \xi_i} \).  Define   
\[
\delta^{(n)}:   \sum_i u_i d_i + \sum_{j=1}^n a_j \frac{\partial}{\partial \xi_j} \longmapsto \sum u_i \delta(d_i) + \sum_{j=1}^n (-1)^{|a_j|} \frac{\partial a_j}{\partial \xi_j};
\]
where \( u_i \in G(n) \), \( d_i \in \operatorname{Der}A \), \( a_j \in \Lambda \). Then \( \delta^{(n)} \) is a divergence on \( \Lambda \).

Consider the Lie superalgebras
\[
\widetilde{S}(\delta, n) = \{ d \in \operatorname{Der} \Lambda \mid \delta^{(n)}(d) = 0 \},
\]
\[
S(\delta, n) = [\widetilde{S}(\delta, n), \widetilde{S}(\delta, n)].
\]

\begin{example}
    {\textcolor{.}{As a special case of Example 1.3,} let \( A = \mathbb{C}[t, t^{-1}] \), let \( \gamma \in \mathbb{C}^\times \). The divergence \( \delta \) is defined as  \( \delta(t^{i+1} \frac{\partial}{\partial t}) = (\gamma + i + 1)t^i \),  \(i~\in  \mathbb{Z} \). The superalgebras \( S(\delta, n) \) were introduced in \cite{ademollo1976,schwimmer1987} under the name ''\( SU_2 \)-superconformal algebras".}
\end{example}


The Cartan subalgebra \( H = \operatorname{Span}_F (\xi_i \frac{\partial}{\partial \xi_i} - \xi_j \frac{\partial}{\partial \xi_j} \mid 1 \leqslant i, j \leqslant n) \) lies in \( S = S(\delta, n) \). Hence, the root decomposition of \( S \) (resp. \(\widetilde{S}(\delta,n)\)) with respect to the action of \( H \) is inherited from the root decomposition of the Lie superalgebra \( W(A:n) \):
\[
S = S_0 + \sum_{\alpha \in \Delta} S_{\alpha}.
\]

The root system is the same as in the case of \( W(A:n) \). {\color{.} Given the form of \(H\), it is convenient to note} that \( \Delta \subseteq (\bigoplus_{i=1}^{n} \mathbb{Z} \omega_i) / \mathbb{Z}(\omega_1 + \cdots + \omega_n) \).
{\color{.} Specifically, \(S\) decomposes into the following subspaces:}
\[
S_{(\omega_{i_1} + \cdots + \omega_{i_r} - \omega_j)} = A \xi_{i_1} \cdots \xi_{i_r} \frac{\partial}{\partial \xi_j},
\]
\[
S_{(\omega_{i_1} + \cdots + \omega_{i_r})} = \]
\[ \{ \xi_{i_1} \cdots \xi_{i_r}  ( d + \sum_{\color{.} j \notin \{i_1,\ldots ,i_r\} } a_j \xi_j \frac{\partial}{\partial \xi_j}  )  |~   {\color{.}a_j \in A, \mbox{ }} d \in \operatorname{Der} A, \mbox{ } {\color{.}\delta(d) - \sum_{j \notin \{i_1,\ldots, i_r\}} a_j = 0}  \}, 
\]

\textcolor{.}{Similarly, \(\widetilde{S}(\delta,n) = \widetilde{S}_0 + \sum_{\alpha \in \Delta} \widetilde{S}_{\alpha}\). Note that \(\widetilde{S}_{\alpha}={S}_{\alpha}\), and} \[
\widetilde{S}_0 =\] \[\{ d_1 + \xi_1 \cdots \xi_n d_2 + \sum_{j=1}^n a_j \xi_j \frac{\partial}{\partial \xi_j} |~ a_j \in A,\mbox{ } d_1, d_2 \in \operatorname{Der} A, \mbox{ } {\color{.}\delta(d_1) = \sum_{j=1}^n a_j},\mbox{ } \delta(d_2) = 0 \}.
\]

We will also use
\[
\tag{1} A (\xi_i \frac{\partial}{\partial \xi_i} - \xi_j \frac{\partial}{\partial \xi_j}) \subseteq [S_{\omega_i-\omega_j}, S_{\omega_j-\omega_i}],
\]
\[
\tag{2}d_1 + \sum_{k=1}^n a_k \xi_k \frac{\partial}{\partial \xi_k} \in [S_{\omega_i-\omega_j}, S_{\omega_j-\omega_i}] + \sum_{i=1}^n [S_{\omega_i}, S_{-\omega_i}], \quad \delta(d_1) = {\color{.}\sum_{k=1}^n a_k},\] \[\textrm{ where }  a_k \in A ,  d_1 \in \operatorname{Der}A.
\]



\begin{align*}\tag{3}
 &\sum [S_{\omega_{i_1} + \cdots + \omega_{i_p}}, S_{\omega_{j_1} + \cdots + \omega_{j_q}}] \\ 
= \quad & \xi_1 \cdots \xi_n \cdot \operatorname{Span} \left( [d_1, d_2] - (-1)^{|d_1| \cdot |d_2|} \delta(d_2) d_1 + \delta(d_1) d_2 \middle| d_1, d_2 \in \operatorname{Der} A \right),
\end{align*}
where the summation runs over all divisions \( \{i_1, \ldots, i_p\} \sqcup \{j_1, \ldots, j_q\} = \{1, \ldots, n\} \), \( p+q=n \).



\subsection{Superalgebras \( K(A:n) \)}\label{subsec:BasicConstrut-K}\mbox{}\\

We start with definitions of Poisson and contact brackets.

Let \( A \) be an associative commutative superalgebra with 1. A graded bracket \( [\cdot, \cdot] : A \times A \to A \) is called a \textit{Poisson bracket} if

1) \( (A, [\cdot, \cdot]) \) is a Lie superalgebra,

2) \( [a b, c] = a [b, c] + (-1)^{|b| \cdot |c|} [a, c] b \) for arbitrary elements \( a, b, c \in A_{\bar{0}} \cup A_{\bar{1}} \); \( |b| \) is the parity of an element \( b \).

We say that a (super)algebra \( A \) with a Poisson bracket is a \textit{Poisson (super)algebra}.

A bracket \( [\cdot, \cdot] : A \times A \rightarrow A \) is called a \textit{contact bracket} if

1) \( (A, [\cdot, \cdot]) \) is a Lie superalgebra;

2) the linear mapping \( D(a) = [a, 1] \) is an even derivation of \( A \);

3) \( [ab, c] = a[b, c] + (-1)^{|b| \cdot |c|}[a, c]b + abD(c) \) for arbitrary elements \( a, b, c \in A_{\bar{0}} \cup A_{\bar{1}} \).

We call a (super)algebra \( A \) with a contact bracket a \textit{contact (super)algebra}.




\begin{proposition}\label{prop1}
\mbox{}

{\color{.} \begin{itemize}
    \item Let \( A \) be a contact superalgebra with a contact bracket \( [\cdot, \cdot] \) and the derivation \( D(a) = [a, 1] \), \( a \in A \);

    \item Let \( B = \sum_{i \geqslant 0} B_i \) be a graded Poisson (super)algebra. For an element \( b \in B_s \) we denote \( \deg(b) = s \). Abusing notation,  we denote the Poisson bracket on \( B \) by the same symbol \( [\cdot, \cdot] \);

    \item Suppose that there exists an integer \( k \neq 0 \) such that
\[
[B_i, B_j] \subseteq B_{i+j-k}
\]
for all \( i,j \geqslant 0 \), and the integer \( k \) is not equal to 0 in the field \( F \).
\end{itemize}}
Under these three assumptions, \textcolor{.}{with \( a_i \in A_{\bar{0}} \cup A_{\bar{1}} \), \( b_i \in B_{\bar{0}} \cup B_{\bar{1}} \) one can  define a contact} bracket on \( A \otimes_F B \) \textcolor{.}{ by: }
\[
[a_1 \otimes b_1, a_2 \otimes b_2] = (-1)^{|b_1| \cdot |a_2|} [a_1, a_2] \otimes b_1 b_2
\]
\[
+ (-1)^{|b_1| \cdot |a_2|} a_1 a_2 \otimes [b_1, b_2] \tag{5}
\]
\[
 +(-1)^{|b_1| \cdot |a_2|} \frac{1}{k} (\deg(b_1) a_1 D(a_2) - \deg(b_2) D(a_1) a_2) \otimes b_1 b_2, 
\]
\end{proposition}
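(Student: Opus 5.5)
We have to show that formula (5) defines a contact bracket on $C = A\otimes_F B$. Concretely, we check three things: (i) the bracket is a Lie superalgebra bracket; (ii) $\widehat D(c)=[c,1]$ is an even derivation of $C$; (iii) the contact Leibniz rule $[xy,z]=x[y,z]+(-1)^{|y||z|}[x,z]y+xy\widehat D(z)$ holds.

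\textbf{Step 1: the derivation $\widehat D$.} Take $b_2=1\in B_0$, so $\deg(1)=0$. Because $[b_1,1]=0$ in a Poisson algebra, formula (5) gives
\[
[a\otimes b,1\otimes 1]=D(a)\otimes b+\tfrac{\deg b}{k}\,a\otimes b .
\]
Hence $\widehat D = D\otimes \mathrm{id}+\mathrm{id}\otimes \tfrac1k E$, where $E$ is the Euler (degree) derivation of $B$. Both summands are even derivations, so $\widehat D$ is an even derivation of $C$.

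\textbf{Step 2: the Leibniz-type identity.} Condition (iii) is trilinear, so it suffices to check it on homogeneous tensors $x=a_1\otimes b_1$, $y=a_2\otimes b_2$, $z=a_3\otimes b_3$. We expand both sides and sort the terms by type.
\begin{itemize}
\item The terms containing $[a_i,a_3]$ match because $A$ is a contact superalgebra. The leftover term $a_1a_2D(a_3)\otimes b_1b_2b_3$ is absorbed into the $D\otimes\mathrm{id}$ part of $xy\widehat D(z)$.
\item The terms containing $[b_i,b_3]$ match by the Poisson Leibniz rule in $B$.
\item The terms with factor $1/k$ must be tracked using the additivity $\deg(b_1b_2)=\deg b_1+\deg b_2$, together with $D(a_1a_2)=D(a_1)a_2+a_1D(a_2)$. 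After this bookkeeping, what remains is $\tfrac{\deg b_3}{k}\,a_1a_2a_3\otimes b_1b_2b_3$. This is exactly the $E$-part of $xy\widehat D(z)$.
\end{itemize}
The Koszul signs are handled uniformly. The prefactor $(-1)^{|b_1||a_2|}$ is precisely the sign of the superalgebra isomorphism that reorders $a_1\otimes b_1\otimes a_2\otimes b_2$ into $a_1a_2\otimes b_1b_2$.

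\textbf{Step 3: supersymmetry and the Jacobi identity.} Supersymmetry is immediate. Each of the three lines of (5) changes sign with the correct Koszul factor under swapping the two arguments. For the third line this holds because its two terms are antisymmetric in $(1,2)$.

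For the Jacobi identity I would avoid a direct expansion and use a structural reduction. For a bracket that satisfies the contact Leibniz rule (iii) with a derivation $\widehat D$, the Jacobian $J(x,y,z)$ behaves like a derivation in each argument, up to terms involving $\widehat D$ and $J$ itself. I would verify this as a lemma; this is the standard fact that a contact bracket is determined by a bivector field together with a vector field. Granting the lemma, it suffices to check Jacobi on the generators $a\otimes 1$ and $1\otimes b$ of $C$. On these the checks are short:
\begin{itemize}
\item $[a\otimes1,a'\otimes1]=[a,a']\otimes1$, so triples of $A$-generators reduce to Jacobi in $A$.
\item $[1\otimes b,1\otimes b']=1\otimes[b,b']$, since $D(1)=0$, so triples of $B$-generators reduce to Jacobi in $B$.
\item $[a\otimes1,1\otimes b]=-\tfrac{\deg b}{k}\,D(a)\otimes b$, so the mixed triples reduce to two facts: $[D,\mathrm{ad}\,a]=\mathrm{ad}\,D(a)$, which holds because $D=\mathrm{ad}(\cdot)$ applied to $1$ in a Lie superalgebra, and $E$ being a derivation of the bracket of degree $-k$, i.e.\ $\deg[b,b']=\deg b+\deg b'-k$.
\end{itemize}
The grading hypothesis $[B_i,B_j]\subseteq B_{i+j-k}$ is used exactly here, since it says that $\tfrac1k E-\mathrm{id}$ acts on brackets correctly.

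\textbf{Main obstacle.} I expect the hardest part to be the reduction lemma for the Jacobi identity, namely showing that the Jacobian of a contact-type bracket is controlled by its values on generators. This requires a careful sign-laden computation with the term $xy\widehat D(z)$. If the lemma proves too messy, I would fall back on a direct verification on $(a_i\otimes b_i)_{i=1,2,3}$. There I would group the terms by how many $1/k$ factors they carry: the $1/k^2$ terms cancel by antisymmetry, and the $1/k$ terms cancel using $\deg[b_i,b_j]=\deg b_i+\deg b_j-k$.
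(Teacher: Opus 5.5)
Your plan is workable: verify $\widehat D$, the contact Leibniz rule and supersymmetry directly, then reduce the Jacobi identity to algebra generators. However, the explicit values on which Steps 1--3 rest are miscomputed, and as written Steps 2 and 3 verify false identities.

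\textbf{Step 1.} Set $a_2=1$, $b_2=1$ in (5). The third line is $\frac1k\bigl(\deg(b)\,a\,D(1)-\deg(1)\,D(a)\bigr)\otimes b=0$. This is because $D(1)=[1,1]=0$ (skew-symmetry, $\operatorname{char}F\neq 2$; you use this yourself in Step 3) and $\deg 1=0$. Hence $\widehat D=D\otimes\mathrm{id}$, with no Euler part.

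\textbf{Step 2.} The residue $\frac{\deg b_3}{k}a_1a_2a_3\otimes b_1b_2b_3$ you claim cannot arise, since every $\frac1k$-term of (5) carries a factor $D(\cdot)$. In fact the $\frac1k$-terms of $[xy,z]$ and of $x[y,z]+(-1)^{|y||z|}[x,z]y$ agree exactly, by $\deg(b_1b_2)=\deg b_1+\deg b_2$ and the Leibniz rule for $D$. The only leftover is $a_1a_2D(a_3)\otimes b_1b_2b_3$, coming from the contact rule in $A$, and that is all of $xy\,\widehat D(z)$. With your $\widehat D$, identity (iii) would be false: the extra Euler term on the right has nothing to match it.

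\textbf{Step 3.} You dropped the first line of (5). The correct value is $[a\otimes 1,1\otimes b]=[a,1]\otimes b-\frac{\deg b}{k}D(a)\otimes b=\bigl(1-\frac{\deg b}{k}\bigr)D(a)\otimes b$. This is not cosmetic. Write the mixed bracket as $\lambda(\deg b)\,D(a)\otimes b$. Then, for even elements, the coefficient of $D(a)\otimes[b,b']$ in $J(a\otimes 1,1\otimes b,1\otimes b')$ is $\lambda(\deg b)+\lambda(\deg b')-\lambda(\deg b+\deg b'-k)$.
\begin{itemize}
\item For the correct $\lambda(c)=1-c/k$ this vanishes. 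Equivalently, $\mathrm{id}-\frac1kE$ is a derivation of the bracket of $B$, which is exactly what $[B_i,B_j]\subseteq B_{i+j-k}$ gives.
\item For your $\lambda(c)=-c/k$ it equals $-1$, so Jacobi would fail.
\end{itemize}
Your closing remark about $\frac1kE-\mathrm{id}$ names the right fact, but it contradicts your displayed formula.

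\textbf{The reduction lemma.} This part is correct and worth keeping. From skew-symmetry and (iii) alone one gets, up to Koszul signs, $J(xy,z,w)=x\,J(y,z,w)+J(x,z,w)\,y-xy\,J(1,z,w)$. So $J$ vanishes once it vanishes on triples from a generating set containing $1$, such as $\{a\otimes 1\}\cup\{1\otimes b\}$. This is a cleaner organisation than the paper's bare ``straightforward computation'', but the generator checks must be redone with the corrected brackets.
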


\begin{remark}
    A particular case of this formula is contained in \cite[Cor.6.2]{SOLARTE2016291}.
\end{remark}

\begin{proof}
A straightforward computation.
\end{proof}


The \textit{Grassmann superalgebra} is an associative commutative graded superalgebra
\[
G(n) = \sum_{i \geqslant 0} G(n)_i
\]
with the Poisson bracket
\[
\{f, g\} = (-1)^{|f|} \sum_{i=1}^{n} \frac{\partial f}{\partial \xi_i} \frac{\partial g}{\partial \xi_i},
\]
\[
[G(n)_i, G(n)_j] \subseteq G(n)_{i+j-2}.
\]

For an arbitrary contact superalgebra \( A \) we consider the tensor product \( A \otimes_F G(n) \) with the contact bracket given in  \( (5) \).

This gives rise to the superalgebra
\[
K(A:n) = (A \otimes_F G(n), [\cdot, \cdot]).
\]

Choosing \( A = \mathbb{C}[t,t^{-1}] \) with the bracket
\[
\{f, g\} = f'g - fg' \quad \text{or} \quad \{f, g\} = t(f'g - fg')
\]
we obtain the superalgebras \( K(1:n) \) of Neveu-Schwarz or Ramond types known as 
``\( SO_n \)-superconformal algebras", which were introduced in \cite{ademollo1976}\cite{schoutens1987}.

\begin{remark}
    The class of superalgebras \( K(A:n) \) includes Hamiltonian superalgebras \( H(2k:n) \) (see \cite{kac1977}).
\end{remark}

\textcolor{.}{Recall that \(\sqrt{-1}\in{F}\). The parity of \(n\) is required for the following structural description of \(K(A:n)\):} 

 {\textit{Let the integer \( n = 2m \) be even.}} Up to a change of variables, we can assume that the Grassmann variables are \( \zeta_1, \ldots, \zeta_m, \eta_1, \ldots, \eta_m \) 
that 
\[ [\zeta_i, \zeta_j] = [\eta_i, \eta_j] = 0, \quad 
[\zeta_i, \eta_j] = \delta_{ij}, \quad 1 \leqslant i,j \leqslant m.
\]


We call the subalgebra
\[
H = \operatorname{Span}_F(\zeta_i \eta_i, 1 \leqslant i \leqslant m)
\]
the Cartan subalgebra of \( K(A:2m) \). 

Consider the linear functionals \( \omega_i \in H^* \), 
defined by \(\langle \omega_i, \zeta_j \eta_j \rangle = \delta_{ij}\). The action of \( \operatorname{ad}(H) \) on \( K(A:n) \) defines the root decomposition
\[
K(A:2m) = K(A:2m)_0 + \sum_{\alpha \in \Delta} K(A:2m)_{\alpha},
\]
\[
\Delta = \{ \omega_{i_1} + \cdots + \omega_{i_p} - \omega_{j_1} - \cdots - \omega_{j_q} \mid 1 \leqslant i_1 < \cdots < i_p \leqslant m;
\]
\[
1 \leqslant j_1 < \cdots < j_q \leqslant m; \{ i_1, \ldots, i_p \} \cap \{ j_1, \ldots, j_q \} = \emptyset;
\]
\[
p \geqslant 0, q \geqslant 0, p + q \geqslant 1 \};
\] the centralizer \( K(A:n)_0 \) of \( H \) in \( K(A:n) \) is \[ K(A:2m)_0 = A + \sum_{r\geqslant 1} A \zeta_{i_1} \eta_{i_1} \cdots \zeta_{i_r} \eta_{i_r} .\]


\vskip .3cm 

 {\textit{\textcolor{.}{As for an odd integer \( n=2m+1 \geqslant 3 \),}}} we change from the \textcolor{.}{classical} Grassmann variables \( \xi_1, \ldots, \xi_n \) to the set of variables \( \zeta_1, \ldots, \zeta_m, \eta_1, \ldots, \eta_m, \textcolor{.}{\xi_n'} \), where \( [\zeta_i, \zeta_j] = [\eta_i, \eta_j] = 0 \), \( [\zeta_i, \eta_j] = \delta_{ij} \), \( [\zeta_i, \textcolor{.}{\xi_n'}] = [\eta_i, \textcolor{.}{\xi_n'}] = 0 \); \( 1 \leqslant i,j \leqslant m \); \( [\textcolor{.}{\xi_n'}, \textcolor{.}{\xi_n'}] = 1 \).


As above, we call the subalgebra
\[
H = \operatorname{Span}_F(\zeta_i \eta_i, 1 \leqslant i \leqslant m)
\]
the Cartan subalgebra of \( K(A:n) \). The action of \( \operatorname{ad}(H) \) defines the root decomposition
\[
K(A:n) = K(A:n)_0 + \sum_{\alpha \in \Delta} K(A:n)_{\alpha}
\]
with the same root system \( \Delta \subset \bigoplus_{i=1}^m \mathbb{Z} \omega_i \)
as in the even case.

\subsection{Twisted Subalgebras \( K^{(2)}(A:n) \), \( n \geqslant 2 \)} \label{subsec:BasicConstrut-K twisted} \mbox{}\\ 

Let \( A \) be an associative commutative superalgebra with a contact bracket \( [\cdot, \cdot] \). Let \( A = A^{(0)} \oplus A^{(1)} \) be a \( \mathbb{Z}/2\mathbb{Z} \)-grading of \( A \) as a contact algebra, i.e. \( A^{(i)} A^{(j)} \subseteq A^{(i+j \bmod 2)} \),
\( [A^{(i)}, A^{(j)}] \subseteq A^{(i+j \bmod 2)} \).

We assume that this grading is compatible with the grading \( A = A_{\bar{0}} \oplus A_{\bar{1}} \), \textcolor{.}{i.e. \(A^{(i)}=A^{(i)}_{\bar{0}} \oplus A^{(i)}_{\bar{1}}\).}

\begin{example}\label{eg: grading on F[t,t^-1]}
Let \( A = F[t, t^{-1}] = F[t^2, t^{-2}] \oplus tF[t^2, t^{-2}] \). This is a \( \mathbb{Z}/2\mathbb{Z} \)-grading for the contact bracket
\[
[t^i, t^j] = (i-j)t^{i+j}
\]
of Ramond type, but not for the contact bracket \( [t^i, t^j] = (i-j)t^{i+j-1} \) of Neveu-Schwarz type.
\end{example}

Let \( \xi_1, \ldots, \xi_n \) be Grassmann variables of \( G(n) \), \( \xi_i \xi_j + \xi_j \xi_i = 0 \), \( 1 \leqslant i,j \leqslant n \). Let \( G(n-1) = \langle 1, \xi_1, \ldots, \xi_{n-1} \rangle \). Then
\[
K(A:n) = (A^{(0)} \otimes G(n-1) + A^{(1)} \otimes \xi_n G(n-1)) \oplus (A^{(1)} \otimes G(n-1) + A^{(0)} \otimes \xi_n G(n-1))
\]
is a \( \mathbb{Z}/2\mathbb{Z} \)-grading of the Lie superalgebra \( K(A:n) \).

The Lie superalgebra
\[
K^{(2)}(A:n) = A^{(0)} \otimes G(n-1) + A^{(1)} \otimes \xi_n G(n-1)
\]
is a \textit{twisted superalgebra of the series \( K \)}.

\subsection{Exceptional Cheng-Kac Superalgebras \( CK(A,d) \)}\label{subsec:CK construction} \mbox{}\\

In \cite{martinez2001}, a Lie superalgebra \( CK(A,d) \) was constructed for an arbitrary associative commutative superalgebra \( A \) with an \textbf{even} derivation \( d \). Choosing \( A = \mathbb{C}[t, t^{-1}] \), \( d = \frac{\partial}{\partial t} \) or \( t \frac{\partial}{\partial t} \), one \textcolor{.}{ recovers} the superalgebra \( CK(6) \). We briefly recall the construction of \( CK(A,d) \) following \cite{martinez2001,martinez2003}:



The simple finite dimensional Lie superalgebra \( P(n-1) \) is the superalgebra of \( 2n \times 2n \) matrices of the type
\[
\begin{pmatrix}
a & k \\
h & -a^T
\end{pmatrix},
\]
where \( a, k, h \) are \( n \times n \) matrices over \( F \),  \(
\operatorname{tr}(a) = 0\), \( k^T = -k\),  \(h^T = h.\)
The superalgebras \( P(n) \) (the so called ``strange" series), \( n \geqslant 4 \), are centrally closed. However, \( P(3) \) has a nontrivial universal central extension \( \widehat{P(3)} \) (see \cite{martinez2003}). Its existence is related to the fact that the Lie algebra \( \mathrm{Skew}_4(F) \) of skew-symmetric \( 4 \times 4 \) matrices is a direct sum of two ideals,
\[
\mathrm{Skew}_4(F) = \mathfrak{sl}(2) \oplus \mathfrak{sl}(2).
\]



For an arbitrary element \( k \in \mathrm{Skew}_4(F) \) we consider the decomposition \( k = k_+ + k_- \) and let
\[
\varphi(k) = k_+ - k_-.
\]
The universal central extension  \( \widehat{P(3)} \) of \( P(3) \) can be realized as the superalgebra of \( 8 \times 8 \) matrices over the polynomial algebra \( F[d] \) of the type
\[
\begin{pmatrix}
a & k \\
\varphi(k)d + h & -a^T
\end{pmatrix} + \gamma d \cdot I_8,
\]
where \( a, k, h \) are \( 4 \times 4 \) matrices over \( F \), \( \operatorname{tr}(a) = 0 \), \(k^T = -k\),  \(h^T = h\),  \(\gamma \in F\) \text{and}   \(I_8\) \text{is the identity} \(8 \times 8\) \text{matrix, see \cite{martinez2003}}.

\textcolor{.}{Recall that} \(A\) is an associative commutative superalgebra with an even derivation \( d \). Consider the Weyl algebra \( W = \sum_{i \geqslant 0} A d^i \), where the variable \( d \) does not commute with coefficients \( a \in A \), but \( da = ad + d(a) \).

The superalgebra \( CK(A,d) \) is the subsuperalgebra of the Lie superalgebra \( \mathfrak{gl}_8(W) \) of \( 8 \times 8 \) matrices over \( W \) generated by \( \widehat{P(3)} \) and by all matrices 
\[
\begin{pmatrix} e_{ij}(a) & 0 \\ 0 & -e_{ji}(a) \end{pmatrix}, \quad a \in A, \quad 1 \leqslant i \neq j \leqslant 4,
\]
where \( e_{ij}(a) \) is the \( 4 \times 4 \) matrix having the element \( a \) at the intersection of the \( i \)-th row and the \( j \)-th column, and zeros everywhere else.

The Cartan subalgebra in this case is
\[
H = \{ \operatorname{diag}(\gamma_1, \gamma_2, \gamma_3, \gamma_4, -\gamma_1, -\gamma_2, -\gamma_3, -\gamma_4) \mid \gamma_i \in F, \mbox{ } \sum_{i=1}^4 \gamma_i = 0 \}.
\]

As above, define the functionals \( \omega_i \in H^* \),
\[
\langle \omega_i, \operatorname{diag}(\gamma_1, \gamma_2, \gamma_3, \gamma_4, -\gamma_1, -\gamma_2, -\gamma_3, -\gamma_4) \rangle = \gamma_i, \quad 1 \leqslant i \leqslant 4.
\]

\textcolor{.}{Abusing notation,} the dual space \( H^* \) will be identified with
\[
\bigoplus_{i=1}^4 F \omega_i / F(\omega_1 + \omega_2 + \omega_3 + \omega_4).
\]
The coset of \( \omega_i \) 
maps
\(
h = \operatorname{diag}(\gamma_1, \gamma_2, \gamma_3, \gamma_4, -\gamma_1, -\gamma_2, -\gamma_3, -\gamma_4)\) \text{ to } \(\gamma_i\) \text{ for } \(1 \leqslant i \leqslant 4\).

The even roots are \( \Delta_{\bar{0}} = \{ \omega_i - \omega_j \mid 1 \leqslant i \neq j \leqslant 4 \} \) and the odd roots \footnote{\textcolor{.}{We remind the reader that \(\omega_1 + \omega_2 + \omega_3 + \omega_4=0\) in \(H^*\)}} are \( \Delta_{\bar{1}} = \{ -\omega_i - \omega_j \mid 1 \leqslant i \leqslant j \leqslant 4 \} \), \(\Delta = \Delta_{\bar{0}} \sqcup \Delta_{\bar{1}}.
\)


Consider the following  elements of \( CK(A,d) \):
\[
e_{\omega_i - \omega_j}(a) = \begin{pmatrix} e_{ij}(a) & 0 \\ 0 & -e_{ji}(a) \end{pmatrix},
\]
\[
h_{\omega_i - \omega_j}(a) = \begin{pmatrix} e_{ii}(a) - e_{jj}(a) & 0 \\ 0 & e_{jj}(a) - e_{ii}(a) \end{pmatrix},
\]
\[
q_{-\omega_i - \omega_j}^{(-)}  = \begin{pmatrix} 0 & 0 \\ e_{ij}(1) + e_{ji}(1) & 0 \end{pmatrix}, \quad 1 \leqslant i,j \leqslant 4.
\]

Also define
\[
q_{-\omega_i - \omega_j}^{(-)}(a) = [q_{-\omega_i - \omega_k}^{(-)}, e_{\omega_k - \omega_j}(a)], \quad 1 \leqslant i,j,k \leqslant 4; \quad k \neq i,j, \quad a \in A.
\]


This definition does not depend on a choice of \( k \). In particular, \( q_{-2\omega_i}(a) = [q_{-\omega_i - \omega_k}^{(-)}, e_{\omega_k - \omega_i}(a)] \), \( k \neq i \).

For {\color{.} \( i \neq j \)} we also define the element
\[
q_{\omega_i + \omega_j}^{(+)} = \begin{pmatrix} 0 & e_{ij}(1) - e_{ji}(1) \\ \varphi(e_{ij}(1) - e_{ji}(1)){\color{.}d} & 0 \end{pmatrix}.
\]

In \cite{martinez2003} it was shown that
\[
CK(A,d)_{\omega_i - \omega_j} = e_{\omega_i - \omega_j}(A), \quad 1 \leqslant i \neq j \leqslant 4;
\]
\[
CK(A,d)_{-2\omega_i} = q_{-2\omega_i}(A);
\]
\[
CK(A,d)_{\omega_i + \omega_j} = CK(A,d)_{-\omega_k - \omega_\ell} = [q_{\omega_i + \omega_k}^{(+)}, e_{\omega_j - \omega_k}(A)] + q_{-\omega_k - \omega_\ell}^{(-)}(A),
\]
where \( \{i, j, k, \ell\} = \{1, 2, 3, 4\} \).


For an arbitrary element \( a \in A \) consider the elements
\begin{align*}
\mathrm{Vir}(a) = &\mbox{ }[[e_{\omega_4 - \omega_1}(a), q_{\omega_3 + \omega_1}^{(+)}], q_{\omega_2 + \omega_1}^{(+)}]. 
\end{align*} \textcolor{.}{Let \(\textrm{Vir}(A)=\operatorname{Span}_F \{\textrm{Vir}(a) \mid a \in A \}\);  \textrm{Vir}(A) is a matrix Lie superalgebra.}

\textcolor{.}{We give an explicit formula for \(a\) even:}
\begin{align*}
    \mathrm{Vir}(a) = & \mbox{ }I_8 (a d) - \begin{pmatrix} e_{11}(a') & 0 \\ 0 & -e_{11}(a') + I_4(a') \end{pmatrix},
\end{align*}
where \( a' = [a, d] = {\color{.} -d(a)} \), \( I_4(a) = \operatorname{diag}(a, a, a, a) \). 



The mapping
\[
a d \mapsto \mathrm{Vir}(a)
\]
is an embedding \textcolor{.}{(of Lie superalgebras)}. In \cite{martinez2003} it was shown that \textcolor{.}{the centralizer of \(H\) in \(CK(A,d)\) is }
\[
C_{CK(A,d)}(H) = \sum_{1 \leqslant i \neq j \leqslant 4} h_{\omega_i - \omega_j}(A) + \mathrm{Vir}(A) \cong A \otimes_F H + \mathrm{Vir}(A).
\] 


\section{Finite Generation}

\subsection{Superalgebras \( W(A:n) \)}\label{subsec:FinGen-W} \mbox{}\\

Let \( A \) be an associative commutative superalgebra. Let \( A^{\mathcal{D}} \) denote the linear span of all values of all (even and odd) derivations of the superalgebra \( A \),
\[
A^{\mathcal{D}} = \operatorname{Span}_F \{ d(a) \mid d \in \operatorname{Der} A, a \in A \}.
\]
Clearly, \( A^{\mathcal{D}} \) is an ideal of \( A \).

Recall that a Lie (super)algebra \( L \) is \textit{perfect} if \( L = [L, L] \).

We will prove Theorem \ref{theorem:A}  for the class \(W.\)

\begin{theorem}
\mbox{}

(1) The Lie superalgebras \( W(A:n), n \geqslant 2 \), are perfect. If the superalgebra \( A \) is finitely generated then so is the Lie superalgebra \( W(A:n) \).

(2) The Lie superalgebra \( W(A:1) \) is perfect if and only if \( A^{\mathcal{D}} = A \). If the superalgebra \( A \) is finitely generated then the Lie superalgebra \( W(A:1) \) is finitely generated if and only if \( A^{\mathcal{D}} \) \textcolor{.}{(as an \(F\)-submodule)} has finite codimension in \( A \).
\end{theorem}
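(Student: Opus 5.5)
We work with the root decomposition of \(L=W(A:n)\) relative to \(H\), resp. the \(\mathbb{Z}\)-grading by \(\ad(\xi_1\frac{\partial}{\partial\xi_1})\) when \(n=1\). The basic tool is the bracket
\[
\Big[a\,\xi_{i_1}\cdots\xi_{i_k}\frac{\partial}{\partial\xi_j},\ b\,u\frac{\partial}{\partial\xi_l}\Big],
\]
together with brackets against \(\operatorname{Der}A\) and \(\xi\)-monomials times \(\operatorname{Der}A\). Throughout, \(a,b\in A\) and \(u\) is a Grassmann monomial.

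\textbf{Perfectness for \(n\geqslant 2\).}
1. Every nonzero root space \(L_\alpha\) lies in \([H,L_\alpha]\subseteq [L,L]\), since \(\alpha\neq 0\) on \(H\).
2. It remains to treat \(L_0=(F\cdot1+\xi_1\cdots\xi_n)\operatorname{Der}A+\sum_i A\xi_i\frac{\partial}{\partial\xi_i}\).
   (a) For \(i\neq j\), \([a\xi_i\frac{\partial}{\partial\xi_j},\ \xi_j\frac{\partial}{\partial\xi_i}]=a(\xi_i\frac{\partial}{\partial\xi_i}-\xi_j\frac{\partial}{\partial\xi_j})\) up to sign.
   (b) \([a\frac{\partial}{\partial\xi_j},\ \xi_j\xi_k\frac{\partial}{\partial\xi_k}]=a\xi_k\frac{\partial}{\partial\xi_k}\) up to sign, for \(k\neq j\). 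This uses \(n\geqslant2\) and, with (a), gives all of \(A\xi_i\frac{\partial}{\partial\xi_i}\).
   (c) For \(d\in\operatorname{Der}A\), \([\xi_j d,\ \frac{\partial}{\partial\xi_j}]=\pm d\) plus possibly nothing extra, since \(d(\xi)=0\). The same trick with \(\xi_1\cdots\xi_n d\) against \(\frac{\partial}{\partial\xi_j}\) paired appropriately, i.e. \([\xi_1\cdots\widehat{\xi_j}\cdots\xi_n d,\ \xi_j\xi_k\frac{\partial}{\partial\xi_k}]\) type brackets, or simply \([\xi_1\cdots\xi_n d,\ \xi_j\frac{\partial}{\partial\xi_j}]\) computed directly, gives \(\xi_1\cdots\xi_n\operatorname{Der}A\).

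Hence \(L=[L,L]\).

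\textbf{Finite generation for \(n\geqslant 2\).} Let \(a_1,\dots,a_r\) generate \(A\). Take as generators the finite-dimensional \(\mathfrak{gl}\)-type part \(\operatorname{Span}_F(\xi_u\frac{\partial}{\partial\xi_j})\) over all monomials \(u\), together with \(a_s\frac{\partial}{\partial\xi_1}\) and \(a_s\xi_1\frac{\partial}{\partial\xi_2}\). Brackets such as \([a\xi_1\frac{\partial}{\partial\xi_2},\ b\xi_2\frac{\partial}{\partial\xi_3}]\), or for \(n=2\) brackets with \(\xi_2\frac{\partial}{\partial\xi_1}\) steps, produce \(ab\,\xi_1\frac{\partial}{\partial\xi_3}\). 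Since \(d(\xi)=0\), no derivative terms of coefficients arise. Thus the generated subalgebra contains \(A\xi_i\frac{\partial}{\partial\xi_j}\), and then, by the \(G(n)\)-action via brackets with the finite part, all of \(\Lambda\frac{\partial}{\partial\xi_j}\).

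The remaining piece is \(\Lambda\operatorname{Der}A\), which is not finitely generated as an \(A\)-module in general. This is the main obstacle. The key identity is
\[
\Big[\xi_j d,\ a\frac{\partial}{\partial\xi_j}\Big]=\pm\big(d(a)\tfrac{\partial}{\partial\xi_j}\cdot\xi_j\ \text{terms}\big)\pm a\,d,
\]
so from a single \(\xi_jd\) we get \(ad\) modulo already-obtained pieces. The difficulty is obtaining \(\xi_j d\) for every \(d\). I would argue from perfectness: \(\operatorname{Der}A\)-components are reached by \([\Lambda\frac{\partial}{\partial\xi_j},\ \Lambda\operatorname{Der}A]\). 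Alternatively, I would avoid needing arbitrary \(d\) by showing the subalgebra generated is an ideal containing \(\Lambda\frac{\partial}{\partial\xi}\) and noting \([\xi_jd,\ \frac{\partial}{\partial\xi_j}]=d\). Then finite generation reduces to: the ideal generated by \(\sum\Lambda\frac{\partial}{\partial\xi_j}\) is everything, and \(d\) arises as \([\xi_j d,\partial_j]\) with \(\xi_j d=[\xi_j\xi_k\partial_k,\ \ldots]\). I expect to need a trick here: \(d\) is determined by \(d(a_s)\), and I would attempt to express \(\xi_1\xi_2 d\) via brackets \([\,\cdot\,,\ a_s\ldots]\). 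I will check carefully whether finite generation genuinely holds for \(\operatorname{Der}A\) infinitely generated, and if not, use the paper's convention of which derivations appear.

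\textbf{\(n=1\).} Here \(L_{-1}=A\partial\), \(L_0=\operatorname{Der}A+A\xi\partial\), \(L_1=\xi\operatorname{Der}A\), with \(\partial=\frac{\partial}{\partial\xi_1}\).
1. \([L,L]\cap L_{-1}=[L_0,L_{-1}]=\operatorname{Span}(d(a)\partial,\ a\partial)\), using \([a\xi\partial,\ \partial]=\pm a\partial\). So \(L_{-1}\) is fine.
2. In \(L_0\), the \(A\xi\partial\) component of \([L_{-1},L_1]+[L_0,L_0]\) is \(\operatorname{Span}(d(a))\xi\partial\), and \([L_{-1},L_{-1}]=0\). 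So the \(A\xi\partial\)-part of \([L,L]\) is exactly \(A^{\mathcal D}\xi\partial\).
3. Hence \(L\) is perfect iff \(A^{\mathcal D}=A\), the other components being handled as before. Likewise \(L/[L,L]\supseteq A/A^{\mathcal D}\), so finite generation forces finite codimension.
4. Conversely, if \(A^{\mathcal D}\) has finite codimension, the \(n\geqslant2\) argument applies after adjoining finitely many representatives of \(A/A^{\mathcal D}\) times \(\xi\partial\).
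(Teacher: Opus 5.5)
Your perfectness argument for $n\geqslant 2$ is correct. It differs mildly from the paper's, which uses the two brackets (6), (7) directly instead of weight spaces plus the zero-weight part. Your $n=1$ analysis is also correct up through the necessity direction: the abelianization of $W(A:1)$ is $A/A^{\mathcal D}$, as in the paper.

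\textbf{The gap is in every sufficiency claim for finite generation.} For $n\geqslant 2$ you never show that your finitely generated subalgebra contains $\Lambda\operatorname{Der}A$. The obstacle you name is not real. For finitely generated $A$, which is the only case at issue, $\operatorname{Der}A$ \emph{is} a finitely generated $A$-module. The paper gets this from Billig--Futorny, and it is elementary. If $a_1,\dots,a_r$ generate $A$, then $d\mapsto (d(a_1),\dots,d(a_r))$ embeds $\operatorname{Der}A$ as an $A_{\bar 0}$-submodule of $A^r$. This module is Noetherian, because $A_{\bar 0}$ is a finitely generated commutative algebra and $A$ is a finite $A_{\bar 0}$-module. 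None of your proposed workarounds can replace this fact. Passing to the ideal generated by $\sum_j\Lambda\frac{\partial}{\partial\xi_j}$, or trying to recover $d$ from the values $d(a_s)$, still requires $\xi_j d$ for arbitrary $d$ inside the generated subalgebra.

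Once you write $\operatorname{Der}A=\sum_{s=1}^k A d_s$ and add the finitely many generators $\xi_j d_s$, your own bracket finishes the argument. For every $a\in\Lambda$,
\[
\Big[\xi_j d_s,\ a\tfrac{\partial}{\partial\xi_j}\Big]=\xi_j d_s(a)\tfrac{\partial}{\partial\xi_j}\mp a\,d_s .
\]
The first term already lies in the generated subalgebra, so $\Lambda d_s$ does too, and hence so does $\Lambda\operatorname{Der}A$.

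The same missing ingredient breaks your step 4 for $n=1$. That step also cannot literally reuse the $n\geqslant 2$ argument, since there is no $\xi_1\frac{\partial}{\partial\xi_2}$ when there is only one odd variable. With one odd variable $\xi$, take as generators $\frac{\partial}{\partial\xi}$, $a_i\xi\frac{\partial}{\partial\xi}$, $\xi d_s$ and $u_k\xi\frac{\partial}{\partial\xi}$, where the $u_k$ represent a basis of $A/A^{\mathcal D}$. Then:
\begin{itemize}
\item Iterating $[a\frac{\partial}{\partial\xi},a_i\xi\frac{\partial}{\partial\xi}]=\pm aa_i\frac{\partial}{\partial\xi}$ gives $A\frac{\partial}{\partial\xi}$.
\item Iterating $[a\xi d_s,a_i\xi\frac{\partial}{\partial\xi}]=\pm a_ia\,\xi d_s$ gives $\xi\operatorname{Der}A$.
\item $[\frac{\partial}{\partial\xi},\xi d]=d$ gives $\operatorname{Der}A$.
\item $[\xi d,a\frac{\partial}{\partial\xi}]=\xi d(a)\frac{\partial}{\partial\xi}\mp ad$ gives $A^{\mathcal D}\xi\frac{\partial}{\partial\xi}$, and adding the $u_k$ yields all of $A\xi\frac{\partial}{\partial\xi}$.
\end{itemize}
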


\begin{proof}
Let \( a', a'' \in \Lambda(A:n) = A \otimes G(n) \) \textcolor{.}{ be odd or even}. Suppose that \( n \geqslant 2 \). If the element \( a' \) does not involve \( \xi_2 \) and the element \( a'' \) does not involve \( \xi_1 \), then
\[
[a' \frac{\partial}{\partial \xi_1}, a'' \xi_1 \frac{\partial}{\partial \xi_2}] = (-1)^{|a''|} a' a'' \frac{\partial}{\partial \xi_2}. \tag{6}
\]

If an element \( a' \) does not involve \( \xi_2 \), \( a'' \) is an arbitrary element from \( \Lambda(A:n) \) and \( d \in \operatorname{Der} A \), then
\[
[a' \xi_2 d, a'' \frac{\partial}{\partial \xi_2}] = \pm {a''} {a'} d \pm {a'} \xi_2 d(a'') \frac{\partial}{\partial \xi_2}. \tag{7}
\]

The equality (6) implies the inclusion
\[
\sum_{i=1}^n \Lambda(A:n) \frac{\partial}{\partial \xi_i} \subseteq [W(A:n), W(A:n)].
\]

The equality (7) implies
\[
\Lambda(A:n) \operatorname{Der} A \subseteq [W(A:n), W(A:n)] + \sum_{i=1}^n \Lambda(A:n) \frac{\partial}{\partial \xi_i} = [W(A:n), W(A:n)].
\]

This proves that the superalgebra \( W(A:n), n \geqslant 2 \) is perfect.

Suppose that the superalgebra \( A \) is generated by elements \( a_1 = 1, a_2, \ldots, a_m \). In \cite{billig2018} Billig and Futorny showed that the Lie superalgebra \( \operatorname{Der} A \) is a finitely generated \( A \)-module. Let \( \operatorname{Der} A = \sum_{j=1}^k A d_j \). The equalities (6), (7) imply that the Lie superalgebra \( W(A:n) \) is generated by elements \( a_i \frac{\partial}{\partial \xi_k}, a_i \xi_j\frac{\partial}{\partial \xi_k}, \xi_j d_s \);  \( 1 \leqslant i \leqslant m, 1 \leqslant j,k \leqslant n, 1 \leqslant s \leqslant r \).

This completes the proof of the part (1) of the theorem.

Suppose now that \( n = 1 \). We have
\[
[W(A:1), W(A:1)] = A \frac{\partial}{\partial \xi_1} + (\operatorname{Der} A + A^{\mathcal{D}} \xi_1 \frac{\partial}{\partial \xi_1}) + \xi_1 \operatorname{Der} A.
\]

From this formula it follows that \( W(A:1)/[W(A:1), W(A:1)] \) and \( A/A^{\mathcal{D}} \) are isomorphic as vector spaces. Hence the Lie superalgebra \( W(A:1) \) is perfect if and only if \( A = A^{\mathcal{D}} \). 

Suppose that the superalgebra \( W(A:1) \) is finitely generated. If the superalgebra \( W(A:1) \) is finitely generated, then
\[
\dim_F W(A:1)/[W(A:1), W(A:1)] < \infty,
\]
which implies
\[
\dim_F A/A^{\mathcal{D}} < \infty.
\]

Now suppose that \( \dim_F A/A^{\mathcal{D}} < \infty \). To show that the Lie superalgebra \( W(A:1) \) is finitely generated, we need the equalities
\[
[a' \frac{\partial}{\partial \xi_1}, a'' \xi_1\frac{\partial}{\partial \xi_1}] = \pm a' a''\frac{\partial}{\partial \xi_1} \tag{9}
\]
\[
[a' \xi_1 d, a''\xi_1 \frac{\partial}{\partial \xi_1}] = \pm a' a'' \xi_1 d\tag{10}
\]
\[
[\frac{\partial}{\partial \xi_1}, a\xi_1 d] = \pm ad \tag{11}
\]
\[
[\xi_1 d, a \frac{\partial}{\partial \xi_1}] = \pm a d \pm \xi_1 d(a) \frac{\partial}{\partial \xi_1} \tag{12}
\]
for arbitrary elements \( a', a'', a \in A_{\bar{0}} \cup A_{\bar{1}} \), \( d \in (\operatorname{Der} A)_{\bar{0}} \cup (\operatorname{Der} A)_{\bar{1}} \).

As above, let \( \operatorname{Der} A = \sum_{j=1}^k A d_j \). Suppose that the superalgebra \( A \) is generated by elements \( a_1 = 1, a_2, \ldots, a_m \) and, according to our assumption,
\[
A = \sum_{i=1}^s F u_i + \sum_{j=1}^k A d_j(A).
\]

We claim that the Lie superalgebra \( W(A:1) \) is generated by elements \[\frac{\partial}{\partial \xi_1}, \mbox{ } a_i\xi_{1}\frac{\partial}{\partial \xi_1},\mbox{ } \xi_1 d_j ,\mbox{ } u_k \xi_1 \frac{\partial}{\partial \xi_1}, \quad   1 \leqslant i \leqslant m, 1 \leqslant j \leqslant r, 1 \leqslant k \leqslant s, \]

Let \( W' \) be the Lie superalgebra of \( W(A:1) \) generated by these elements. The equality (9) implies that \( A \frac{\partial}{\partial \xi_1} \subseteq W' \). The equality (10) implies \( \xi_1 \operatorname{Der}A \subseteq W' \). Since \( \operatorname{Der} A = [\frac{\partial}{\partial \xi_1}, \xi_1 \operatorname{Der}A] \), by (11) we conclude that \( \operatorname{Der} A \subseteq W' \). Equality (12) implies that
\[
A^{\mathcal{D}} \xi_1 \frac{\partial}{\partial \xi_1} \subseteq [\xi_1 \operatorname{Der} A, A \frac{\partial}{\partial \xi_1}] + \operatorname{Der} A \subseteq W'
\]
and therefore
\[
A \xi_1 \frac{\partial}{\partial \xi_1} = \sum_{k=1}^{s} F u_k \xi_1 \frac{\partial}{\partial \xi_1} + A^{\mathcal{D}} \xi_1 \frac{\partial}{\partial \xi_1} \subseteq W'.
\]

We have proved that \( W' = W(A:1) \). This completes the proof of the theorem.
\end{proof}

\begin{remark}
    P. Etingof and T. Schedler (see \cite{etingof2016}) constructed an example of a finitely generated associative commutative algebra \( A \) such that \( \dim_F A/A^{\mathcal{D}} = \infty \).
\end{remark}


\subsection{Superalgebras \( S(\delta, n) \), \( n \geqslant 2 \)} \mbox{}\\

As above, we consider an associative commutative superalgebra \( A \) with a divergence \( \delta : \operatorname{Der} A \rightarrow A \):
and extend \( \delta \) to the divergence \( \delta^{(n)} : \operatorname{Der} \Lambda(A:n) \rightarrow \Lambda(A:n) \).

We denote
\[
\widetilde{S}(\delta, n) = \{ d \in W(A:n) \mid \delta^{(n)}(d) = 0 \},
\]
\[
S = S(\delta, n) = [\widetilde{S}(\delta, n), \widetilde{S}(\delta, n)].
\]

Denote also
\[
S^* = \sum_{\alpha \in \Delta} S_\alpha = \sum_{\alpha \in \Delta} \widetilde{S}(\delta, n)_\alpha.
\]

Let \( \alpha \in \Delta \). From
\(
\xi_i \frac{\partial}{\partial \xi_i} - \xi_j \frac{\partial}{\partial \xi_j} = [\xi_i \frac{\partial}{\partial \xi_j}, \xi_j \frac{\partial}{\partial \xi_i}]
\)
we conclude that \( S_\alpha \subseteq [S, S] \), \( S^* \subseteq [S, S] \).

Let us prove Theorem A for \textcolor{.}{class} \( S(\delta, n) \), \( n \geqslant 2 .\)

\begin{theorem}\mbox{}

(1) The Lie superalgebra \( S = S(\delta, n), n \geqslant 2 \), is perfect;

(2) if the superalgebra \( A \) is finitely generated then so is the superalgebra \( S \).
\end{theorem}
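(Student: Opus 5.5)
Since $S=[\widetilde S,\widetilde S]$ by definition, I will not prove $S=[S,S]$ directly. Instead I will show that $S$ is spanned by $S^*$ together with brackets $[S_\alpha,S_\beta]$ of root spaces. The first step uses the root decomposition $\widetilde S=\widetilde S_0+S^*$. A bracket of two elements of $\widetilde S$ decomposes into $[\widetilde S_0,\widetilde S_0]$, $[\widetilde S_0,S^*]\subseteq S^*$, and $[S^*,S^*]$. We already know $S^*\subseteq[S,S]$, and also $[S^*,S^*]\subseteq[S,S]$, since every root space lies in $S$.

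So the main task is the term $[\widetilde S_0,\widetilde S_0]$. Write elements of $\widetilde S_0$ as $d_1+\xi_1\cdots\xi_n d_2+\sum_j a_j\xi_j\frac{\partial}{\partial\xi_j}$. For the part not involving $\xi_1\cdots\xi_n$, formula (2) places the element $d_1+\sum a_k\xi_k\frac{\partial}{\partial\xi_k}$ in $[S_{\omega_i-\omega_j},S_{\omega_j-\omega_i}]+\sum_i[S_{\omega_i},S_{-\omega_i}]\subseteq[S,S]$. Hence such elements already lie in $[S,S]$, and so do their brackets with anything in $S$.

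It remains to control the top component $\xi_1\cdots\xi_n\operatorname{Der}A$ of $[\widetilde S_0,\widetilde S_0]$. A bracket $[d_1+\sum a_j\xi_j\partial_j,\ \xi_1\cdots\xi_n d_2]$ with $\delta(d_1)=\sum a_j$ and $\delta(d_2)=0$ equals
\[
\xi_1\cdots\xi_n\Bigl([d_1,d_2]+\bigl(\textstyle\sum a_j\bigr)d_2\Bigr)
\]
up to terms $\xi_1\cdots\xi_n d_2(a_j)$-type corrections, which lie in weight-zero components also reached by (3). Using $\delta(d_2)=0$, this has the form $[d_1,d_2]-(-1)^{|d_1||d_2|}\delta(d_2)d_1+\delta(d_1)d_2$. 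By (3) it therefore lies in $\sum[S_{\omega_{i_1}+\dots+\omega_{i_p}},S_{\omega_{j_1}+\dots+\omega_{j_q}}]\subseteq[S,S]$. The bracket $[\xi_1\cdots\xi_n d_2,\xi_1\cdots\xi_n d_2']$ vanishes because $n\ge2$. This gives $S=[\widetilde S,\widetilde S]\subseteq[S,S]$. I expect this bookkeeping of the top degree component to be the main obstacle: one must match it exactly with the span in (3), including parity signs.

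For finite generation I follow the proof for $W(A:n)$. Take generators $1=a_1,\dots,a_m$ of $A$ and $\operatorname{Der}A=\sum_{s}Ad_s$, which is possible by Billig--Futorny. The candidate generating set consists of:
\begin{itemize}
\item the elements $a_i\xi_{k_1}\cdots\xi_{k_r}\frac{\partial}{\partial\xi_j}$ with $j\notin\{k_1,\dots,k_r\}$, which lie in $S$;
\item the elements $\xi_k\bigl(d_s-\delta(d_s)\xi_l\frac{\partial}{\partial\xi_l}\bigr)$ with $l\ne k$, which lie in $S_{\omega_k}$.
\end{itemize}
Relation (6), restricted to root vectors, produces all of $A\,\xi\cdots\frac{\partial}{\partial\xi_j}$ by multiplying coefficients. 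Brackets of the second family with $A\frac{\partial}{\partial\xi_k}$, as in (7), give the remaining root spaces $S_{\omega_{i_1}+\dots+\omega_{i_r}}$ with Der$A$ parts over $A$. So all of $S^*$ is generated. Finally, $S$ is spanned by $S^*$ and $[S^*,S^*]$ by the first part, so $S$ is finitely generated. The delicate point here is generating every $S_{\omega_{i_1}+\dots+\omega_{i_r}}$, which carries the divergence constraint. I would handle it by the following reduction. Every element $\xi_I(d+\sum a_j\xi_j\partial_j)$ is a sum of $\xi_I\bigl(ad_s-\delta(ad_s)\xi_l\partial_l\bigr)$ and elements of the form $\xi_I\, b(\xi_j\partial_j-\xi_l\partial_l)$. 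The latter are brackets of odd-root vectors already obtained.
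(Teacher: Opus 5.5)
Your part (1) is correct and is essentially the paper's argument. You split $\widetilde S_0$ into two pieces. The first is the part $d_1+\sum_j a_j\xi_j\frac{\partial}{\partial\xi_j}$ with $\delta(d_1)=\sum_j a_j$, which (2) places in $[S,S]$. The second is the top part $\xi_1\cdots\xi_n S(\delta)$, whose brackets with $\widetilde S_0$ land in the span in (3). Your explicit handling of these brackets is, if anything, more careful than the paper's ``$\xi_1\cdots\xi_n[S(\delta),S(\delta)]$''.

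Two small remarks on part (1):
\begin{itemize}
\item The ``correction terms'' you hedge about vanish identically, because they carry the factor $\xi_1\cdots\xi_n\xi_j=0$.
\item For a homogeneous element one has $|a_j|=|d_1|$, so the bracket is exactly $(-1)^{n|d_1|}\xi_1\cdots\xi_n([d_1,d_2]+\delta(d_1)d_2)$. The sign bookkeeping you anticipated as the main obstacle therefore causes no trouble.
\end{itemize}

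Part (2) follows the route the paper only indicates (``similarly to $W(A:n)$''), but as written three steps fail. Write $\partial_l=\frac{\partial}{\partial\xi_l}$.

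\textbf{The sign in your second family is wrong.} One has $\delta^{(n)}(a\xi_l\partial_l)=-a$, hence $\delta^{(n)}(\xi_k(d_s-\delta(d_s)\xi_l\partial_l))=2\xi_k\delta(d_s)$. So these elements are not even in $\widetilde S(\delta,n)$ unless $\delta(d_s)=0$. The constraint in the paper's description of $S_{\omega_{i_1}+\cdots+\omega_{i_r}}$ is $\delta(d)=\sum_j a_j$. The generators should therefore be $\xi_kX_s$ with $X_s=d_s+\delta(d_s)\xi_l\partial_l$. The same sign must be fixed in your final reduction: with the minus sign the leftover has coefficient sum $2\delta(d)$, so it is not a combination of the $\xi_j\partial_j-\xi_l\partial_l$.

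\textbf{The bracket with $A\partial_k$ lands in the wrong weight.} Bracketing $\xi_kX_s$ (weight $\omega_k$) with $A\partial_k$ (weight $-\omega_k$) gives weight $0$, so it never produces $S_{\omega_I}$ with $I\neq\emptyset$. You need the bracket with $a\xi_I\partial_k$, where $k\notin I$:
\begin{itemize}
\item If $k,l\notin I$ are distinct, the bracket equals, up to signs, $\xi_I(ad_s+\delta(ad_s)\xi_l\partial_l)+\xi_I d_s(a)(\xi_k\partial_k-\xi_l\partial_l)$. The second summand is a bracket of root vectors you already have.
\item If $|I|=n-1$, take $k$ to be the missing index and $l\in I$. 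The bracket is then exactly $\pm\xi_I(ad_s+\delta(ad_s)\xi_k\partial_k)$.
\end{itemize}

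\textbf{Relation (6) does not cover every root vector.} Restricted to root vectors, (6) needs a spare index $m\notin K\cup\{j\}$, and none exists when $K\cup\{j\}=\{1,\dots,n\}$. In that case use $[a\xi_K\partial_j,\,b(\xi_p\partial_p-\xi_j\partial_j)]=\pm2ab\,\xi_K\partial_j$ for some $p\in K$. Here $b(\xi_p\partial_p-\xi_j\partial_j)=[b\xi_p\partial_j,\xi_j\partial_p]$, and the factor $2$ is invertible because the characteristic is not $2$.

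With these repairs, your reduction together with $S=\langle S^*\rangle$ from part (1) does give finite generation.
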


\begin{proof}
We will show that the superalgebra \( S \) is generated by \( S^* \), i.e. \( S = \langle S^* \rangle \). \textcolor{.}{ Formulas (1), (2) from section  {\ref{subsec:BasicConstrut-S}}.} imply that
\[
\widetilde{S}(\delta, n) = \langle S^* \rangle + \xi_1 \cdots \xi_n \cdot S(\delta),
\]
where \( S(\delta) = \{ d \in \operatorname{Der} A \mid \delta(d) = 0 \} \).

Now,
\[
S(\delta, n) = [\widetilde{S}(\delta, n), \widetilde{S}(\delta, n)] = \langle S^* \rangle + \xi_1 \cdots \xi_n [S(\delta), S(\delta)].
\]

Formulas (3), (4) imply that
\[
\xi_1 \cdots \xi_n [S(\delta), S(\delta)] \subseteq [S^*, S^*].
\]

This proves the part (1) of the theorem.

The part (2) is proved similarly to the corresponding part of Theorem  \ref{theorem:A} for superalgebras \( W(A:n) \), see  Section { \ref{subsec:FinGen-W}} . This completes the proof of the theorem.
\end{proof}

\subsection{Superalgebras \( K(A:n) \), \( n \geqslant 2 \)}\label{subsec:FinGen-K(no.1)} \mbox{}\\

Let \( A \) be a contact superalgebra with derivation \( d(a) = [a, 1] \), \( a \in A \). As above,
\[
G(n) = \langle 1, \xi_1, \ldots, \xi_n \mid \xi_i^2 = 0, \xi_i \xi_j + \xi_j \xi_i = 0, 1 \leqslant i, j \leqslant n \rangle
\]
is the Grassmann superalgebra. In the Grassmann superalgebra \( G(n) \) we define the Poisson bracket \( [\cdot, \cdot] \) such that \( [\xi_i, \xi_j] = \delta_{ij} \), \( 1 \leqslant i, j \leqslant n \). \footnote{When \(\sqrt{-1} \in F\), change of variables equate this bracket to the one in Section {\ref{subsec:BasicConstrut-K}}.}

Now we assume that \( 0 \neq \sqrt{2} \in F, \sqrt{-1} \in F \), and form new variables 
\[
\zeta_i = \frac{1}{\sqrt{2}} (\xi_{2i-1} + \sqrt{-1} \xi_{2i}), \quad 
\eta_i = \frac{1}{\sqrt{2}} (\xi_{2i-1} - \sqrt{-1} \xi_{2i}), \quad 1 \leqslant i \leqslant m.
\]

\begin{itemize}
    \item For even \( n = 2m \), clearly  \( [\zeta_i, \zeta_j] = [\eta_i, \eta_j] = 0 \); \( [\zeta_i, \eta_j] = \delta_{ij} \), \( 1 \leqslant i, j \leqslant m \).
    \item For odd \( n = 2m+1 \), clearly \( \zeta_i, \eta_j, 1 \leqslant i,j \leqslant m \) bracket together as above. In addition, we set \( \mu = \xi_{2m+1} \).
\end{itemize} 

 For an arbitrary integer \( r \in [0,n]\), consider the subspace
\[
M_r = \operatorname{Span}_F \left\{ [a, b] + \frac{r}{2} a d(b) \middle| a, b \in A \right\}.
\]

For arbitrary elements \( a, b \in A_{\bar{0}} \cup A_{\bar{1}} \) we have
\[
\left( [a, b] + \frac{r}{2} a d(b) \right) + (-1)^{|a||b|} \left( [b, a] + \frac{r}{2} b d(a) \right) = \frac{r}{2} d(ab),
\]
which implies 
\[
d(A) \subseteq M_r.
\]


\subsection{Root Decomposition of \( K(A:n) \)}\label{subsec:FinGen-K(no.2)} \mbox{}\\

Let \( L = K(A:n) \). Assuming that \( n \geqslant 2 \) let
\[
L = L_0 + \sum_{\alpha \in \Delta} L_\alpha
\]
be the root decomposition, see Section {\ref{subsec:BasicConstrut-K}} .

Denote \( L^* = \sum_{\alpha \in \Delta} L_\alpha \). Let \( V = \sum_{i=1}^n F \xi_i \). 
\textcolor{.}{We stress that as in Section {\ref{subsec:FinGen-K(no.1)}}, there are two generating sets of variables for 
\(G(n)\): the set of \(\xi\) variables and the set of \(\{\zeta, \eta, \mu\}\) variables. Clearly both sets form \(F\)-bases for \( V\).}

\begin{lemma}\label{lem: KperfFirstLemma} Let \(V^{r}\) be the product of \(r\) copies of \(V\), then: 
\[
[L, L] = \sum_{r=0}^{n-1} A\cdot V^{ r} + M_n \xi_1 \cdots \xi_n. \tag{13}
\]
\end{lemma}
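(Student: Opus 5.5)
The plan is to compute brackets in $L=K(A:n)$ directly from formula (5), taking $B=G(n)$ with $k=2$ and $\deg$ the Grassmann degree. By bilinearity it suffices to take $x=a\otimes \xi_S$ and $y=b\otimes \xi_T$, where $\xi_S=\xi_{s_1}\cdots\xi_{s_p}$ and $\xi_T$ are monomials with $|S|=p$, $|T|=q$. Formula (5) then reads, up to a global sign $(-1)^{|\xi_S||b|}$,
\[
[x,y]=\pm\Big([a,b]+\tfrac12\big(p\,a\,d(b)-q\,d(a)\,b\big)\Big)\,\xi_S\xi_T \pm ab\,\{\xi_S,\xi_T\}.
\]
Here $\{\xi_S,\xi_T\}$ is nonzero only when $|S\cap T|=1$, and it is then a monomial of degree $p+q-2$. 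I will prove the two inclusions in (13) separately. The two-variable identities below are the same regardless of whether we use the $\xi$ basis or the $\zeta,\eta,\mu$ basis, since both span $V$.

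\emph{Inclusion $\supseteq$.} For the lower part, fix $r\le n-1$, a monomial $\xi_S$ of degree $r$, and an index $i\notin S$; such an $i$ exists because $r<n$. Consider $[1\otimes \xi_i,\ a\otimes \xi_i\xi_S]$. The term with $\xi_S\xi_T$ vanishes because $\xi_i\xi_i\xi_S=0$. The Poisson term equals $a\{\xi_i,\xi_i\xi_S\}=\pm a\,\xi_S$, using $[\xi_i,\xi_i]=1$. Hence $A\xi_S\subseteq[L,L]$, and summing over $S$ gives $A\cdot V^r\subseteq [L,L]$. For the top part, take $S=\{1,\dots,n\}$ and $T=\emptyset$, so $p=n$ and $q=0$. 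The Poisson term vanishes, and we get $[a\,\xi_1\cdots\xi_n,\ b]=\pm\big([a,b]+\tfrac n2 a\,d(b)\big)\xi_1\cdots\xi_n$. This is exactly the spanning element of $M_n$, times $\xi_1\cdots\xi_n$.

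\emph{Inclusion $\subseteq$.} Every component of degree $\le n-1$ of any bracket lies in $\sum_{r<n}A V^r$ automatically, so only the coefficient of $\xi_1\cdots\xi_n$ needs attention. First I check that the Poisson term never reaches the top degree. If $|S\cap T|=1$ and $p+q-2=n$, then $|S\cup T|=n+1>n$, which is impossible. Hence the top-degree contributions come only from pairs with $S\sqcup T=\{1,\dots,n\}$, so $p+q=n$, and they are multiples of
\[
[a,b]+\tfrac p2\,a\,d(b)-\tfrac q2\,d(a)\,b .
\]
Since $d$ is an even derivation, $d(a)\,b=d(ab)-a\,d(b)$. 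Substituting, the expression becomes
\[
[a,b]+\tfrac{n}{2}\,a\,d(b)-\tfrac q2\,d(ab).
\]
The first two terms form a generator of $M_n$. The last term lies in $d(A)\subseteq M_n$ by the identity established in Section \ref{subsec:FinGen-K(no.1)}. Hence the top component lies in $M_n\xi_1\cdots\xi_n$.

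I expect the main obstacle to be sign bookkeeping in the super setting. The places to watch are the factor $(-1)^{|b_1||a_2|}$ in (5), the sign in $d(a)b$ versus $b\,d(a)$ when $a$ and $b$ are odd, and the sign of $\{\xi_i,\xi_i\xi_S\}$. I will handle these by working with homogeneous $a,b$ throughout. No sign can affect the conclusion: membership in a subspace is insensitive to an overall sign, and the derivation identity $d(ab)=d(a)b+a\,d(b)$ carries no sign because $d$ is even.
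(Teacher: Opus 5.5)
Your proof is correct. The inclusion of $[L,L]$ in the right-hand side is the paper's argument. It uses formula (14), the fact that only disjoint pairs with $S\sqcup T=\{1,\dots,n\}$ reach the top degree, and the rewriting $[a,b]+\frac p2 a\,d(b)-\frac q2 d(a)b=[a,b]+\frac n2 a\,d(b)-\frac q2 d(ab)$. (Note that $d(A)\subseteq M_n$ is immediate from $b=1$, since $d(1)=0$.)

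For the reverse inclusion you take a genuinely shorter route than the paper:
\begin{itemize}
\item the single contraction $[\xi_i,\,a\,\xi_i\xi_S]=\pm a\,\xi_S$, with $i\notin S$, handles every degree $r<n$ uniformly;
\item $[a\,\xi_1\cdots\xi_n,\,b]=\pm\bigl([a,b]+\frac n2 a\,d(b)\bigr)\xi_1\cdots\xi_n$ produces the spanning elements of $M_n\xi_1\cdots\xi_n$ directly.
\end{itemize}
The paper instead splits into $n$ even and $n$ odd. It uses the root decomposition ($L^*\subseteq[L,L]$ for odd $r$ when $n$ is even) and formulas (16) and (18). It reaches the top component via $[a\xi_{X'},b\xi_{X''}]$ with $X'\sqcup X''=\{1,\dots,n\}$ and both parts proper, which requires first isolating $d(A)\xi_1\cdots\xi_n$.

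What that extra work buys is that every spanning element of $[L,L]$ appears as a bracket of elements of degree $<n$ (root vectors or elements already in $[L,L]$). This is precisely what Lemma \ref{lem:Kperf>2} (perfectness of $[L,L]$ for $n\geqslant 3$) and Lemma \ref{lem:Kfingen>2} (finite generation) quote from this proof. Your brackets use $a\,\xi_1\cdots\xi_n$, both in the top step and in the lower step when $r=n-1$ (since then $\xi_i\xi_S=\pm\xi_1\cdots\xi_n$). That element is generally not in $[L,L]$, so your proof gives (13) but not those downstream facts.

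One small inaccuracy: your contraction identity does not transfer verbatim to the $\zeta,\eta,\mu$ basis. There $\{\zeta_i,\zeta_i\}=0$, and one must pair $\zeta_i$ with $\eta_i$. This is harmless, since you work only in the $\xi$ basis and both sides of (13) are basis-independent.
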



\begin{proof}
For a nonempty subset \( X = \{i_1 < \ldots < i_s\} \subseteq \{1, 2, \ldots, n\} ,  \) we define
\[
\xi_X = \xi_{i_1} \cdots \xi_{i_s}, \quad \xi_{\emptyset} = 1.
\]

For arbitrary elements \( a, b \in A \) we have
\[
[a \xi_X, b \xi_Y] = 
\begin{cases} 
\pm \left( [a, b] + \frac{1}{2} (|X| a \cdot d(b) - |Y| d(a) \cdot b) \right) \xi_{X{\cup}Y} & \text{if } X \cap Y = \emptyset; \\
\pm ab \xi_{Z}, \quad Z=(X\cup Y) \backslash (X \cap Y)   & \text{if } |X \cap Y| = 1; \\
0 & \text{if } |X \cap Y| \geqslant 2.
\end{cases} \tag{14}
\]

Let us show that \( [L, L] \) lies in the right hand side of (13): By (14) we need only to check that \( [a \xi_X, b \xi_Y] \subseteq M_n {V^n} \), assuming that \( X \cap Y = \emptyset \), \( X \cup Y = \{1, 2, \ldots, n\} \), or, equivalently,
\[
  [a, b] + \frac{1}{2} (|X| a \cdot d(b) - |Y| d(a) \cdot b) \in M_n.
\]

Indeed, since \(|X|+|Y|=n , \) it follows that the left hand side of the inclusion above is equal to \([a, b] + \frac{n}{2} a \cdot d(b) - \frac{|Y|}{2} d(ab).\) The latter element lies in \(M_n\) ( since  \(d(A) \subset M_n\) ).

Let us show that \( A \cdot  V^r \), \(  r < n \), and \( M_n {V^n} \) lie in \( [L, L] \):

 {\it Case 1.} Suppose that \( n = 2m \) is even.

Let \( r \) be odd, \( r < 2m \). We claim that
\[
A \cdot  V^r \subseteq L^* \subseteq [L, L]. \tag{15}
\]

Indeed, choose \( r \) different  elements \( u_1, \ldots u_r \in  \{\zeta_{1},  \eta_{1}, \ldots \zeta_{m},  \eta_{m}\} \) . Then \( A u_1 \cdots u_r \subseteq L_\alpha \), \( \alpha = \pm \omega_{i_1} + \cdots + \pm \omega_{i_r} \neq 0 \). \textcolor{.}{This proves (15) as the \(A\)-span of \(u_1\ldots u_r\) contains \(A \cdot  V^r \).}

\textcolor{.}{Let \( r \) be even, \( r < 2m \). Consider}  \( X \subsetneq \{1, 2, \ldots, n\} \) with even \( |X|=r \). Decompose \( X \) into a disjoint union of two subsets \( X', X'' \) (these subsets may be empty) such that \( |X'|, |X''| \) are even. Since \( X \neq \{1, 2, \ldots, n\} \) there exists \( i \in \{1, 2, \ldots, n\} \setminus X \). Choose \( a, b \in A \). Then
\[
a \xi_{X' \cup \{i\}}, b \xi_{X'' \cup \{i\}} \in L^*.
\]

By (14) we have
\[
[a \xi_{X' \cup \{i\}}, b \xi_{X'' \cup \{i\}}] = \pm ab \xi_X. \tag{16}
\]

This implies \( A \xi_X \subseteq [L^*, L^*] \) \textcolor{.}{and thus gives \(A \cdot  V^r  \subseteq [L, L]\) for all \(r<n=2m\).}

\textit{It remains to show that \( M_n \xi_{\{1, 2, \ldots, n\}} \subseteq [L, L] \):}

The set \( X = \{1, 2, \ldots, n\} \) can be decomposed into a disjoint union \( X = X' \cup X'' \), where both \( |X'| \) and \( |X''| \) are odd. Set \( a, b \in A_{\bar{0}} \cup A_{\bar{1}} \).

We have
\[
[a \xi_{X'}, b \xi_{X''}] = \pm \left( [a, b] + \frac{1}{2} (|X'| a d(b) - |X''| d(a) b) \right) \xi_X \in [L^*, L^*]. \tag{17}
\]

Choosing \( b = 1 \) we get
\[
\left(1 - \frac{|X''|}{2}\right) d(a) \xi_X \in [L^*, L^*],
\]
which implies \( d(A) \xi_X \subseteq [L^*, L^*] \).

Arguing as above, we get
\[
[a, b] + \frac{1}{2} (|X'| a \cdot d(b) - |X''| d(a) b) = [a, b] + \frac{n}{2} a \cdot d(b) - \frac{|X''|}{2} d(ab),
\]
which implies \( M_n \xi_X \subseteq [L^*, L^*] \).

 {\it Case 2}. The integer \( n \) is odd, \( n \geqslant 3 \).

Let \( X \subsetneq \{1, 2, \ldots, n\} \). Without loss of generality we assume that \( X \subseteq \{1, 2, \ldots, n-1\} \). If \( |X| < n-1 \), then \( A \xi_X \subseteq [L, L] \), by Case 1.

Let \( X = \{1, 2, \ldots, n-1\} \). Then for an arbitrary element \( a \in A \) we have
\[
[a \xi_1 \cdots \xi_{n-2} \xi_n, \xi_n \xi_{n-1}] = a \xi_1 \cdots \xi_{n-1}. \tag{18}
\]

Finally, let \( X = \{1, 2, \ldots, n\} \). For arbitrary elements \( a, b \in A_{\bar{0}} \cup A_{\bar{1}} \) we have
\[
[a \xi_1, b \xi_2 \cdots \xi_{n-1} \xi_n] = \pm \left( [a, b] + \frac{1}{2} (a \cdot d(b) - (n-1) d(a) b) \right) \xi_1 \cdots \xi_n. \tag{19}
\]

Choosing \( a = 1 \) we get
\[
-\frac{1}{2} d(b) \xi_1 \cdots \xi_n \in [L, L],
\]
hence \( d(A) \xi_1 \cdots \xi_n \in [L, L] \).

Now,
\[
[a, b] + \frac{1}{2} (a \cdot d(b) - (n-1) d(a) b) = [a, b] \textcolor{.}{+\frac{n}{2}} a \cdot d(b) \textcolor{.}{- \frac{n-1}{2}} d(ab),
\]
which implies that \( M_n \xi_1 \cdots \xi_n \subseteq [L, L] \) and completes the proof of the lemma.
\end{proof}

\begin{lemma}\label{lem:Kperf>2}
For \( n \geqslant 3 \) the Lie superalgebra \( [K(A:n), K(A:n)] \) is perfect.
\end{lemma}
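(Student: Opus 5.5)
By Lemma \ref{lem: KperfFirstLemma}, the derived algebra is
\[
L' := [L,L] = \sum_{r=0}^{n-1} A\cdot V^{r} + M_n \xi_1\cdots\xi_n .
\]
We must show $[L',L'] = L'$. The plan is to check that the brackets used in the proof of Lemma \ref{lem: KperfFirstLemma} to produce each summand only involve elements that already lie in $L'$. The one component of $L$ that is missing from $L'$ is $A\xi_1\cdots\xi_n$, which is cut down to $M_n\xi_1\cdots\xi_n$. So it suffices to produce every summand of $L'$ as a sum of brackets of elements $a\xi_X$ with $|X|\le n-1$, since all such elements belong to $L'$.

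First consider the summands $A\cdot V^r$ with $r<n$.

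For $r$ odd and $n$ even, (15) only gives $A\cdot V^r\subseteq L^*$; it does not express it through brackets inside $L'$. Instead I would use a bracket as in (16). Take $a u_1\cdots u_r$ with $u_i\in\{\zeta,\eta,\mu\}$ distinct. If $r\le n-2$, pick a variable $u$ not among the $u_i$ and its partner $u^*$ with $[u,u^*]=1$, and write
\[
a u_1\cdots u_r = \pm[\,a u_1\cdots u_r u,\ u^*\,],
\]
which is justified by the $|X\cap Y|=1$ case of (14). If $r=n-1$, bracket $a u_1\cdots u_{r-1}w$ with $w\,u_r$, where $w$ is the one variable missing, in the same way as (18). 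Both elements in this bracket have degree at most $n-1$.

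For $r$ even, (16) already uses elements of degrees $|X'|+1$ and $|X''|+1$. These are at most $n-1$ provided we choose the split so that both $X'$ and $X''$ have size at most $n-2$. This is possible when $n\ge 3$, with the edge case $r=n-1$ handled as in (18): there the elements have degrees $n-1$ and $2$, and $2\le n-1$ since $n\ge3$. In general the same trick works: for $a\xi_X$ with $|X|\le n-1$, choose $i\notin X$ and write $a\xi_X$ as a bracket of $a\xi_{X\setminus\{j\}}\xi_i$ with $\xi_i\xi_j$, up to sign and suitable ordering. This covers every $X\ne\emptyset$, and both factors have degree at most $n-1$.

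For $X=\emptyset$ we need $A$ itself. Here I would use
\[
[a\xi_1,\ \xi_1] = \pm a ,
\]
since the two index sets meet in exactly one element. Both factors have degree $1\le n-1$.

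Next consider the top component $M_n\xi_1\cdots\xi_n$. Formulas (17) and (19) give it as brackets $[a\xi_{X'},b\xi_{X''}]$ with $X'\sqcup X''=\{1,\dots,n\}$ and both parts nonempty, so both factors have degree at most $n-1$ and lie in $L'$. In (17) we need $|X'|,|X''|$ odd; for $n$ even take $|X'|=1$ and $|X''|=n-1$. In (19) the degrees are $1$ and $n-1$. The argument there extracts $d(A)\xi_1\cdots\xi_n$ by taking $b=1$ or $a=1$, and the coefficients $1-\tfrac{|X''|}{2}$ and $-\tfrac12$ that appear must be nonzero in $F$.

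The main obstacle I expect is this top-degree step when $n$ is even. With $|X''|=n-1$, the coefficient $1-\tfrac{n-1}{2}$ might vanish in $F$. In that case I would switch to a different odd split, or use (19)-type brackets with $a=1$, whose coefficient is $-\tfrac12\ne0$, to obtain $d(A)\xi_1\cdots\xi_n$. After that, the combination $[a,b]+\tfrac n2 a\,d(b)-\tfrac{|X''|}{2}d(ab)$ gives all of $M_n\xi_1\cdots\xi_n$. The hypothesis $n\ge 3$ enters to guarantee that degree-$2$ elements such as $\xi_i\xi_j$ lie in $L'$, and that splittings with both parts nonempty and proper exist.
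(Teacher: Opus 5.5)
Your argument is correct once it rests on the $\xi$-basis identity $[a\xi_{(X\setminus\{j\})\cup\{i\}},\,\xi_i\xi_j]=\pm a\xi_X$ (with $i\notin X$, $j\in X$), together with $[a\xi_1,\xi_1]=\pm a$ and the brackets (17)/(19) for the top component. Its skeleton matches the paper's: every summand of $[L,L]$ in (13) is spanned by brackets of elements $a\xi_X$ with $|X|\leqslant n-1$.

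The real difference is in the odd-degree components. The paper keeps the parity restriction of (16) and treats odd $|X|$ through the root decomposition: $A\xi_X\subseteq[A\xi_X,H]$ with $\zeta_i\eta_i\in[L,L]$. This needs $A\xi_X\subseteq L^*$, so for odd $n$ one must first arrange $n\notin X$, since e.g.\ $A\xi_n$ has weight $0$. Your identity is (16) with $X'=X\setminus\{j\}$, $X''=\{j\}$ and no parity condition. It therefore handles all $1\leqslant|X|\leqslant n-1$ uniformly without any weight bookkeeping, and $n\geqslant 3$ enters exactly through $\deg(\xi_i\xi_j)=2\leqslant n-1$.

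You are also right that the coefficient $1-\tfrac{|X''|}{2}$ from (17) with $b=1$ could vanish in positive characteristic. Putting $a=1$ on a singleton part gives $-\tfrac12 d(b)\,\xi_1\cdots\xi_n$ for every $n$; the paper leaves this choice implicit for even $n$.

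One part of your write-up is wrong and should be deleted: the paragraph on odd $r$ in the variables $\zeta,\eta,\mu$. Formula (14) is a $\xi$-basis formula, in which each variable is paired with itself.
\begin{itemize}
\item In the $\zeta,\eta$ variables, $[a\,u_1\cdots u_r u,\,u^*]=\pm a\,u_1\cdots u_r$ holds in general only if $u^*$ already occurs among the $u_i$. Otherwise the Grassmann product is nonzero, and the contact terms of (5) add $\pm\tfrac12 d(a)\,u_1\cdots u_r u u^*$.
\item For $r=n-1$, every term of $\{u_1\cdots u_{r-1}w,\;w u_r\}$ still contains $w$, so that bracket never produces $u_1\cdots u_r$.
\end{itemize}
Your $\xi$-basis identity already covers every nonempty $X$, so nothing is lost by dropping that paragraph.
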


\begin{proof}
Let \( n \geqslant  3 \). Denote \( L = K(A:n) \). By the formulas (17), (19) above the subspace \( M_n \xi_1 \cdots \xi_n \) lies in the sum \( \sum [A \xi_{X'}, A \xi_{X''}] \) where \( |X'|, |X''| < n \). This implies that
\[
M_n \xi_1 \cdots \xi_n \subseteq [[L, L], [L, L]].
\]

 Let \( X \subsetneq \{1, 2, \ldots, n\} \). By (16), (18) the only case that needs attention is when \( n = 2m+1 \) is odd, \( |X| = r < n \) with \(r\) odd. The inclusion (15) is applicable to  this case, giving \( A \xi_X \subseteq [A \xi_X, H] \), \( H = \sum_{i=1}^m F \zeta_i \eta_i \). If \( n \geqslant 3 \), then \( \zeta_i \eta_i \in [L, L] \) by Section {\ref{subsec:BasicConstrut-K}}. Hence,
\[
A \xi_X \subseteq [  [L, L], [L, L]],
\]
which completes the proof of the lemma.
\end{proof}

\begin{lemma}\label{lem:Kfingen>2}
Let \( A \) be a finitely generated associative commutative superalgebra with a contact bracket. Let \( L = K(A:n), n \geqslant 3 \). Then the Lie superalgebra \( [L, L] \) is finitely generated.

The Lie superalgebra \( L \) is finitely generated if and only if the subspace \( M_n \) has finite codimension in \( A \).
\end{lemma}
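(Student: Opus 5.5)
By Lemma \ref{lem: KperfFirstLemma}, $[L,L]=\sum_{r<n} A V^r + M_n\xi_1\cdots\xi_n$, and $L/[L,L]\cong A/M_n$ as vector spaces, since $L=\sum_{r\le n}AV^r$. The second statement of the lemma will follow quickly from the first. If $L$ is finitely generated, then its abelianization $L/[L,L]$ is finite dimensional, so $M_n$ has finite codimension. Conversely, suppose $[L,L]$ is generated by a finite set $S$ and $A=\sum_{k=1}^s Fu_k+M_n$. Then $L$ is generated by $S$ together with the elements $u_k\xi_1\cdots\xi_n$. So the main task is to show that $[L,L]$ is finitely generated.

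Let $a_1=1,a_2,\dots,a_m$ generate $A$ as an associative superalgebra. I would take as candidate generators the finite set $T$ of elements $a_i\xi_X$, where $X\subsetneq\{1,\dots,n\}$ and $1\le |X|\le n-1$. I would also add the elements $a_i\cdot(\text{monomial in } \zeta,\eta,\mu)$ of nonzero root weight and the Cartan elements $\zeta_j\eta_j$. Let $L'$ be the subalgebra generated by $T$. The key tool is formula (14) in the case $|X\cap Y|=1$: $[a\xi_X,b\xi_Y]=\pm ab\,\xi_Z$ with $Z=X\triangle Y$.

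First I would show that $L'$ contains $A\xi_X$ for every $X$ with $1\le |X|\le n-1$, by induction on the length of a monomial in the $a_i$. Given $a\xi_X\in L'$ and a generator index $i$, I need $j\notin X$ and a second index $k$ with suitable $Y$, $Z$. Choose $j\notin X$ and $k\in X$; this requires $|X|\ge1$, which is why $n\ge3$ matters. Then
\[
[a\xi_X,\ a_i\xi_{\{k,j\}}]=\pm a a_i\,\xi_{(X\setminus\{k\})\cup\{j\}}.
\]
Applying this twice, first with the pair $(k,j)$ and then with $(j,k)$, returns $\pm a a_i a_{i'}\xi_X$. Taking $a_{i'}=1$, this gives $\pm aa_i\xi_X$. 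The subsets $\{k,j\}$ have size $2\le n-1$, so the elements $a_i\xi_{\{k,j\}}$ are in $T$. Since the sign conventions and parities only affect signs, this yields $A\xi_X\subseteq L'$ for all $1\le|X|\le n-1$, and hence $AV^r\subseteq L'$ for $1\le r\le n-1$.

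Next, I would obtain $A\cdot 1$ from $[a\xi_j,\xi_j]=\pm a$. For the top component, formulas (17) and (19) show that $M_n\xi_1\cdots\xi_n$ is spanned by the brackets $[a\xi_{X'},b\xi_{X''}]$ with $X'\sqcup X''=\{1,\dots,n\}$ and $|X'|,|X''|<n$. Both factors already lie in $L'$, so $M_n\xi_1\cdots\xi_n\subseteq L'$, and therefore $L'=[L,L]$.

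I expect the main obstacle to be the first step, namely the bookkeeping showing that products $a a_i$ can always be reached while staying inside proper subsets $X$. This matters especially at $|X|=n-1$, where the only index available outside $X$ is a single $j$. In that case I would use the two-step swap $X\to (X\setminus\{k\})\cup\{j\}\to X$ described above, which stays at size $n-1$. For $|X|=1$, I would check directly that $[a\xi_k,a_i\xi_k\xi_j]=\pm aa_i\xi_j$ lands in the correct space. Finally, I would verify that no contact-bracket correction terms from (14) arise in the cases used: all the brackets above have $|X\cap Y|=1$, so the formula $\pm ab\,\xi_Z$ is exact.
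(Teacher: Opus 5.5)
Your proof is correct and follows the paper's argument. You reduce via Lemma \ref{lem: KperfFirstLemma} and formulas (17), (19) to generating $\sum_{X\subsetneq\{1,\dots,n\}}A\xi_X$. You then multiply coefficients using the $|X\cap Y|=1$ case of (14) while keeping every index set proper: your swap with $a_i\xi_{\{k,j\}}$ is the paper's identity $[a\xi_{X'\cup\{i\}},b\xi_{X''\cup\{i\}}]=\pm ab\,\xi_X$ in the special case $|X''|=1$. The second claim you settle through $L/[L,L]\cong A/M_n$, as the paper does.

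Two minor remarks. The extra generators you add (weight monomials in $\zeta,\eta,\mu$ and Cartan elements) are superfluous. Also, $n\geqslant 3$ is needed because $\{k,j\}$ must be a proper subset, not because $|X|\geqslant 1$.
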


\begin{proof}
We showed in Lemma \ref{lem: KperfFirstLemma} (formulas (17), (19)) that the superalgebra \( [L, L] \) is generated by \( \sum_{X \subsetneq \{1, 2, \ldots, n\}} A \xi_X \). Let \( X \subsetneq \{1, 2, \ldots, n\} \),  \( i \notin X .\)

Let \( X = X' \cup X'' \). Since \( n \geqslant 3 \) we may assume that the subsets \( X' \cup \{i\} \) and \( X'' \cup \{i\} \) are proper. For arbitrary elements \( a, b \in A \) we have
\[
[a \xi_{X' \cup \{i\}}, b \xi_{X'' \cup \{i\}}] = \pm ab \xi_X.
\]

We showed that if the superalgebra \( A \) is generated by elements \( a_1, \ldots, a_m \in A_{\bar{0}} \cup A_{\bar{1}} \), then the Lie superalgebra \( [L, L] \) is generated by \( a_i \xi_X \), \( X \subsetneq \{1, 2, \ldots, n\} \).

If \( \dim_F A / M_n = \infty \) then \( \dim_F L / [L, L] = \infty \), which implies that the superalgebra \( L = K(A:n) \) is not finitely generated. On the other hand, if \( A = \sum_{i=1}^s F u_i + M_n \) then the superalgebra \( L \) is generated by the elements \( u_i \xi_1 \cdots \xi_n \), \( 1 \leqslant i \leqslant s \), and by generators of the superalgebra \( [L, L] \). This completes the proof of the lemma.
\end{proof}

\begin{lemma}\label{lem:K=2perf+fingen}
Let \( A \) be a finitely generated associative commutative superalgebra and let \( M_2 = A ,\) \((M_2 = \operatorname{Span}_F \left\{ [a, b] + a d(b) \middle| a, b \in A \right\}).\) Then the superalgebra \( K(A:2) \) is perfect and finitely generated.
\end{lemma}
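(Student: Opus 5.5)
For $n=2$ we have $L=K(A:2)=A+A\xi_1+A\xi_2+A\xi_1\xi_2$, and I will work directly with formula (14), since Lemma \ref{lem: KperfFirstLemma} already applies for $n=2$. By (13), $[L,L]=A+AV+M_2\xi_1\xi_2$, so the hypothesis $M_2=A$ gives $[L,L]=L$ at once. That settles perfectness, and what remains is finite generation.

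For finite generation I will use the variables $\zeta=\zeta_1,\eta=\eta_1$ with $[\zeta,\eta]=1$, $[\zeta,\zeta]=[\eta,\eta]=0$. Let $a_1=1,a_2,\dots,a_m$ be homogeneous generators of $A$, and let $L'$ be the subalgebra generated by the elements $a_i$, $a_i\zeta$, $a_i\eta$, $\zeta\eta$. The plan is to show that $L'$ contains each graded piece in turn.

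First I will show $A\zeta+A\eta\subseteq L'$. By (14), $[a\zeta,b\zeta\eta]=\pm ab\zeta$ because the two monomials share exactly one variable $\zeta$; likewise $[a\eta,b\zeta\eta]=\pm ab\eta$. This only works if $A\zeta\eta$ is available, so I will interleave it with the next step as an induction on the length of monomials in the generators. The shared-variable bracket $[a\zeta,b\eta]$ gives $\pm ab+(\text{terms})\zeta\eta$ with coefficients $[a,b]+\tfrac12(a\,d(b)-d(a)b)$, and the same coefficient is what feeds $A\zeta\eta$. For $\zeta\eta$ itself, the bracket with $a\zeta$ gives $\pm a\zeta$, so ad$(\zeta\eta)$ preserves $A\zeta$ and $A\eta$ up to sign. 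To get all of $A\zeta$, I will bracket $a\zeta$ with $b\in A$ using the formula for disjoint sets $X=\{\zeta\},Y=\emptyset$. This produces $([a,b]+\tfrac12 a\,d(b))\zeta$, which is not a pure product, and that is the obstacle.

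I therefore expect the main obstacle to be producing products $ab$ rather than brackets, and I will argue as follows. From $[a\zeta,b\eta]$ the $\zeta\eta$-parts and the scalar parts mix, but the scalar part is exactly $\pm ab$ and the $\zeta\eta$-part is $([a,b]+\tfrac12(a\,d(b)-d(a)b))\zeta\eta$. Hence modulo $A\zeta\eta$, products of generators arise, and I will run induction on the degree of monomials in the $a_i$ to get $A\zeta+A\eta+A\subseteq L'+A\zeta\eta$. For the top piece I will use (17) with $X'=\{1\},X''=\{2\}$: it gives $[a\xi_1,b\xi_2]$ with coefficient $[a,b]+a\,d(b)-\tfrac12 d(ab)$, and together with $d(A)\xi_1\xi_2$ (take $b=1$) this spans $M_2\xi_1\xi_2=A\xi_1\xi_2$. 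Since $\xi_1,\xi_2$ are linear combinations of $\zeta,\eta$, the spaces $A\xi_i$ are contained in $A\zeta+A\eta$, so the top component is generated once the degree-one components are.

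To break the circularity, I will do a simultaneous induction: if $L'_k$ denotes the span of elements $u$, $u\zeta$, $u\eta$, $u\zeta\eta$ with $u$ a monomial of degree at most $k$ in the generators, I will show that each such element lies in $L'$ modulo lower filtration terms. Should this bookkeeping fail, the fallback is to add to the generating set a finite spanning set of elements $u\zeta\eta$ for a finite set $u$ representing $A/M_2$. Since that quotient is zero, this reduces to controlling $[A,A]$ and $A\,d(A)$ through brackets of $A\xi_1$ with $A\xi_2$, which are in $L'$ by induction.
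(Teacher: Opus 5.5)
Your perfectness argument is correct: Lemma \ref{lem: KperfFirstLemma} applies verbatim for \(n=2\), and with \(M_2=A\) formula (13) reads \([L,L]=L\).

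The gap is in finite generation, at the step \(A\zeta+A\eta\subseteq L'\), which you never establish. Your generating set contains only \(\zeta\eta\) from the top component, and the available brackets do not produce products:
\begin{itemize}
\item \([u\zeta,\zeta\eta]=\pm u\zeta\) produces nothing new.
\item \([u\zeta,b]\) gives \(([u,b]+\tfrac12 u\,d(b))\zeta\) rather than \(ub\zeta\).
\item \([u\zeta,b\eta]\) gives \(\pm ub\) only modulo \(A\zeta\eta\). Bracketing such degree-zero elements with \(A\zeta\) again yields coefficients like \([c,b]-\tfrac12 d(c)\,b\), not products.
\end{itemize}
Your only route into \(A\zeta\eta\) is \(M_2\xi_1\xi_2=[A\xi_1,A\xi_2]\). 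Nothing controls how long (as monomials in the generators) the elements \(a,b\) must be in order to write a given \(u\) as \(\sum([a,b]+a\,d(b))\). So your induction on degree does not close. This is exactly the circularity you noticed, and neither remedy breaks it. The filtration argument is announced but not carried out. The fallback is vacuous, since \(A/M_2=0\) means it adds no generators. It is also circular, since it invokes \(A\xi_1+A\xi_2\subseteq L'\) ``by induction'', which is what was to be proved.

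The missing idea is to put the finitely many elements \(a_i\zeta\eta\), not just \(\zeta\eta\), into the generating set, as the paper does. Since \(\zeta\cdot\zeta\eta=0\), the first and third terms of (5) vanish. This gives \([u\zeta,a_i\zeta\eta]=\pm u a_i\zeta\) and \([u\eta,a_i\zeta\eta]=\pm u a_i\eta\), with no correction terms. Induction on the length of a monomial \(u\) in the \(a_i\) (with \(a_1=1\)) then gives \(A\zeta+A\eta\subseteq L'\), without needing all of \(A\zeta\eta\) beforehand.

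After that, your own observations finish the proof. First, \(A\xi_1+A\xi_2=A\zeta+A\eta\subseteq L'\). Then \(A\xi_1\xi_2=M_2\xi_1\xi_2=[A\xi_1,A\xi_2]\subseteq L'\) by (17). Finally \(A=[A\xi_1,A\xi_1]\subseteq L'\) by (14). This is precisely the paper's proof: \(M_2=A\) makes \(A\zeta_1+A\eta_1\) generate \(K(A:2)\), and the elements \(a_i\zeta_1,a_i\eta_1,a_i\zeta_1\eta_1\) generate \(A\zeta_1+A\eta_1\).
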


\begin{proof}
The equality \( M_2 = A \) implies that the Lie superalgebra \( K(A:2) \) is generated by \( A \zeta_1 \) and \( A \eta_1 \). We have
\[
[a \zeta_1, b \zeta_1 \eta_1] = (-1)^{|b|+1} ab \zeta_1,
\]
\[
[a \eta_1, b \zeta_1 \eta_1] = (-1)^{|b|} ab \eta_1.
\]

This implies that if elements \( a_1, \ldots, a_m \in A_{\bar{0}} \cup A_{\bar{1}} \) generate \( A \) then the elements \( a_i \zeta_1, a_i \eta_1, a_i \zeta_1 \eta_1 \), \( 1 \leqslant i \leqslant m \), generate \( K(A:2) \). This completes the proof of the lemma.
\end{proof}

\subsection{Twisted Superalgebras \( K^{(2)}(A:n)\), \(n \geqslant 2 \)}\label{subsec:FinGen-twisted K} \mbox{}\\

\textcolor{.}{Let \(A = A^{(0)} \oplus A^{(1)}\) be a \( \mathbb{Z}/2\mathbb{Z} \)-graded contact superalgebra,}  \textcolor{.}{and let \(K^{(2)}(A:n) = A^{(0)} \otimes G(n-1) + A^{(1)} \otimes \xi_n G(n-1)
\) be the twisted type \(K\) Lie superalgebra.}  (See Section \ref{subsec:BasicConstrut-K twisted}) 

For an arbitrary integer \( r \), \( 0 \leqslant r \leqslant n \), and an arbitrary parity \( \sigma = 0 \) or \( 1 \) let
\[
M_r^{(\sigma)} = M_r \cap A^{(\sigma)}.
\] 

\textcolor{.}{
Define also 
\[
M_r^{(i,j)} = \operatorname{Span}_F \left\{ [a, b] + \frac{r}{2} a d(b) \middle| a \in A^{(i)},  b \in A^{(j)}\right\}.
\] It is clear that  \(M_r^{(i,j)} \subset M_r^{(i+j)}\). The relations 
\[
\bullet  \mbox{ }  M_r^{(0)}=M_r^{(0,0)} + M_r^{(1,1)} ,  \quad \bullet  M_r^{(0,1)} \subseteq   M_r^{(1,0)} = M_r^{(1)},  \quad  \bullet  \mbox{ }d(A^{(i+j)}) \subseteq M_r^{(i,j)}\]always hold, 
\textcolor{.}{unless when \(i=j=1\) or when \(i=1,j=0, r=2\). \footnote{Actually, \(M_r^{(0,1)} =   M_r^{(1,0)} \) holds for all \(r\neq 2\). }} 
}


 Reasoning as in \textcolor{.}{Section {\ref{subsec:FinGen-K(no.2)}},} we see that the superalgebra \( [K^{(2)}(A:n), K^{(2)}(A:n)] \) differs from the superalgebra \( K^{(2)}(A:n) \) \textcolor{.}{ in two components:} 

\begin{enumerate}
    \item The \( \xi_1 \cdots \xi_n \) component of the superalgebra \( [K^{(2)}(A:n), K^{(2)}(A:n)] \) is  \textcolor{.}{ \( M_n^{(1)} \xi_1 \cdots \xi_n \);
    \item The \( \xi_1 \cdots \xi_{n-1} \) component of \( [K^{(2)}(A:n), K^{(2)}(A:n)] \) equals \( (M_{n-1}^{(0,0)}+A^{(1)} \cdot A^{(1)}) \xi_1 \cdots \xi_{n-1} \)}.
\end{enumerate} 

Now we can prove Theorem \ref{theorem:A} for the  \textcolor{.}{class} \( K^{(2)}(A:n) \), \( n \geqslant 2 .\)

\begin{theorem}\label{thm:AforKtwisted}\mbox{}

(1) Let \( n \geqslant 3 \). Then the superalgebra \( [K^{(2)}(A:n), K^{(2)}(A:n)] \) is perfect. If the superalgebra \( A \) is finitely generated then so is the Lie superalgebra \( [K^{(2)}(A:n), K^{(2)}(A:n)] \).

(2) If \( A \) is a finitely generated associative commutative superalgebra  \textcolor{.}{such that \( M_2^{(1)} = A^{(1)} \) and \((M_{1}^{(0,0)}+A^{(1)} \cdot A^{(1)})=A^{(0)}\)} then the superalgebra \textcolor{.}{\( K^{(2)}(A:2) \)} is perfect. \textcolor{.}{In addition if \(A\) is finitely generated while \(A^{(0)}\) is a finite \(A^{(1)} \cdot A^{(1)}\)-module, then \( K^{(2)}(A:2) \) } is  finitely generated. 
\end{theorem}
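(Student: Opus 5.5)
We will follow the proofs of Lemmas \ref{lem:Kperf>2}, \ref{lem:Kfingen>2} and \ref{lem:K=2perf+fingen}, keeping track of the $\mathbb{Z}/2\mathbb{Z}$-grading throughout. Write $L=K^{(2)}(A:n)$ and $\xi_X$ as in Lemma \ref{lem: KperfFirstLemma}. A monomial $a\xi_X$ lies in $L$ exactly when $a\in A^{(0)}$ and $n\notin X$, or $a\in A^{(1)}$ and $n\in X$. The bracket formula (14) respects this grading. In particular, in the case $|X\cap Y|=1$ the product $ab\xi_Z$ lands in the correct component, because the coefficient parities add exactly as the $\xi_n$-occurrences do. Note also that the Cartan elements $\zeta_i\eta_i$ built from $\xi_1,\dots,\xi_{n-1}$ have coefficient $1\in A^{(0)}$, so they lie in $L$, and for $n\geq 3$ we have $m=\lfloor (n-1)/2\rfloor\geq 1$ such elements.

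\emph{Part (1).} Let $n\ge3$. As in Lemma \ref{lem: KperfFirstLemma}, $[L,L]$ contains every homogeneous component $A^{(\sigma)}\xi_X$ with $|X|\le n-2$: those with nonzero $H$-weight are $[H,\cdot]$-images, and the others arise as $[a\xi_{X'\cup\{i\}},b\xi_{X''\cup\{i\}}]=\pm ab\xi_X$. Here the auxiliary index $i$ is taken among $1,\dots,n-1$, so that $b=1$ is allowed and the coefficient of the second factor can be taken to be $1\in A^{(0)}$. This also gives $\xi_n$-containing terms, with $i\ne n$ and $n\in X'$. Only the two top components listed before the theorem differ from $L$. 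So $[L,L]$ is spanned by the pieces with $|X|\le n-2$ together with $M^{(1)}_n\xi_1\cdots\xi_n$ and $(M^{(0,0)}_{n-1}+A^{(1)}A^{(1)})\xi_1\cdots\xi_{n-1}$.

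To prove perfectness, we show that each of these spanning pieces is a bracket of elements with $|X|<n-1$, or of elements of $[L,L]$. For the top components we realize the generating brackets, as in (17) and (19), as $[a\xi_{X'},b\xi_{X''}]$ with both $X',X''$ of size at most $n-2$. For $\xi_1\cdots\xi_n$ we split $\{1,\dots,n\}$ into two parts of size at most $n-2$, which is possible since $n\ge3$. For $\xi_1\cdots\xi_{n-1}$ we use $X'\sqcup X''=\{1,\dots,n-1\}$, giving $M^{(0,0)}_{n-1}$, together with $[a\xi_{X'\cup\{n\}},b\xi_{X''\cup\{n\}}]=\pm ab\,\xi_1\cdots\xi_{n-1}$ for $a,b\in A^{(1)}$, giving $A^{(1)}A^{(1)}$. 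When $n=3$ the second of these uses $|X'\cup\{n\}|=2=n-1$, so the factor $a\xi_{X'\cup\{n\}}$ is a top-type element. We check that this case still lies in $[[L,L],[L,L]]$, either because the relevant component of $[L,L]$ contains $A^{(1)}\xi_j\xi_3$, or by rewriting through the $H$-action. The pieces with $|X|\le n-2$ lie in $[[L,L],[L,L]]$ by the $H$-eigenvector and (16) arguments, since $H\subseteq[L,L]$.

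For finite generation, take homogeneous generators $a_k$ of $A$ adapted to the grading, and use the ``multiplication'' brackets $[a\xi_{X'\cup\{i\}},b\xi_{X''\cup\{i\}}]=\pm ab\xi_X$ with $i\le n-1$. These show that $[L,L]$ is generated by the elements $a_k\xi_X$ with $|X|\le n-2$ of the admissible type, together with $\xi_i\xi_j$. This is the same induction on the length of a monomial in the $a_k$ as in Lemma \ref{lem:Kfingen>2}.

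\emph{Part (2).} Let $n=2$, so $L=A^{(0)}+A^{(0)}\xi_1+A^{(1)}\xi_2+A^{(1)}\xi_1\xi_2$. By the description of $[L,L]$, the hypotheses $M_2^{(1)}=A^{(1)}$ and $M_1^{(0,0)}+A^{(1)}A^{(1)}=A^{(0)}$ say exactly that the $\xi_1\xi_2$ and $\xi_1$ components, which are the only possibly missing ones, are full, so $L$ is perfect. For finite generation we use $[a\xi_1,b\xi_1\xi_2]=\pm ab\,\xi_2$ and $[a\xi_2,b\xi_1\xi_2]=\pm ab\,\xi_1$. By perfectness the degree-$0$ and degree-$2$ parts are brackets of the degree-$1$ parts, so it suffices to generate $A^{(0)}\xi_1+A^{(1)}\xi_2$. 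Starting from finitely many $u\xi_1$ with $u\in A^{(0)}$ and $v\xi_2$, $v\xi_1\xi_2$ with $v\in A^{(1)}$, iterated products reach $A^{(1)}A^{(1)}\xi_1$ and then $A^{(1)}A^{(1)}A^{(0)}$-multiples. Here the finiteness of $A^{(0)}$ as an $A^{(1)}A^{(1)}$-module supplies the finite set of $u$'s, and finite generation of $A$ supplies finitely many generators of $A^{(1)}$ over $A^{(0)}$.

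The main obstacle is the bookkeeping of which brackets stay inside $L$ when $n$ is small. This concerns the $n=3$ case of perfectness and the check in part (2) that the multiplicative closure really recovers all of $A^{(1)}$ and $A^{(0)}$.
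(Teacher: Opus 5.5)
Your strategy is the paper's: read off $[L,L]$ componentwise from (14), show that every component is a bracket of elements of $[L,L]$, and then generate with the multiplicative brackets. However, the step ``$H\subseteq[L,L]$'' is false for $n=3$, and at $n=3$ the argument cannot be completed.

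\textbf{The case $n=3$.} Here $H$ is spanned by $\zeta_1\eta_1=-\sqrt{-1}\,\xi_1\xi_2$, which lies in the top component $\xi_1\cdots\xi_{n-1}$. So $H\subseteq[L,L]$ holds only if $1\in M_2^{(0,0)}+A^{(1)}A^{(1)}$. (For $n\ge4$, $\zeta_i\eta_i$ is not top, and your $H$-argument is fine.) The route via (16) fails as well. To produce $a\xi_j$ with $j\in\{1,2\}$, the auxiliary index is either the other element of $\{1,2\}$, which brings back a factor in $\xi_1\xi_2$, or $3$, which only yields $A^{(1)}A^{(1)}\xi_j$.

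A direct computation gives the $\xi_j$-component of $[[L,L],[L,L]]$ as $\bigl(M_1^{(0,0)}+(M_2^{(0,0)}+A^{(1)}A^{(1)})A^{(0)}\bigr)\xi_j$, and this can be proper. Concretely, take $A=F$; the contact bracket is then forced to be zero and $A^{(1)}=0$. One gets $K^{(2)}(F:3)=G(2)$, so $[L,L]=F+F\xi_1+F\xi_2$ is a Heisenberg superalgebra and $[[L,L],[L,L]]=F$. For $A=F[s]$ with the zero bracket, $[L,L]$ is two-step nilpotent and infinite-dimensional, hence not even finitely generated. Your generating set also contains $\xi_1\xi_2$, which need not lie in $[L,L]$.

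So $n=3$ needs an extra hypothesis, such as $1\in M_2^{(0,0)}+A^{(1)}A^{(1)}$; this holds in the Ramond example, where $A^{(1)}A^{(1)}=A^{(0)}$. The paper's proof does not supply the missing idea either. Its assertion that brackets of components $\xi_X$ with $X\not\supseteq\{1,\dots,n-1\}$ span every non-top component of $[L,L]$ already fails at $n=3$, for example for $A^{(0)}\xi_1$.

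\textbf{The case $n\ge4$.} The argument can be repaired, but the bookkeeping needs fixing. $[L,L]$ is not spanned by the pieces with $|X|\le n-2$ together with the two top pieces: you omit $A^{(1)}\xi_{\{1,\dots,n\}\setminus\{k\}}$ for $k\le n-1$. The correct dichotomy, as in the paper, is $X\not\supseteq\{1,\dots,n-1\}$ versus the two top components. Every non-top component is full in $[L,L]$ by $[\xi_j,b\xi_X]=\pm b\,\xi_{X\setminus\{j\}}$ with $j\in X\setminus\{n\}$. This makes your size restrictions unnecessary; note that splitting $\{1,\dots,n\}$ into parts of size $\le n-2$ is impossible for $n=3$. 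Instead, use $[a\xi_1,b\xi_{\{2,\dots,n\}}]$ for $M_n^{(1)}\xi_1\cdots\xi_n$, and take $X',X''$ both nonempty for $A^{(1)}A^{(1)}\xi_1\cdots\xi_{n-1}$.

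\textbf{Part (2).} Perfectness is fine, but the finite-generation step is unjustified. The two product identities never enlarge the $\xi_1\xi_2$-slot. So from finitely many $v\xi_1\xi_2$ they only produce $\xi_1$-coefficients in $F[vv']$ times your $u$'s, not all of $A^{(1)}A^{(1)}$. For example, in the Ramond case with $u\in\{1,t^{2},t^{-2}\}$ and $v=t$, you reach only $F[t^2]+Ft^{-2}$. You must also use non-multiplicative brackets such as $[A^{(0)}\xi_1,A^{(1)}\xi_2]$ and $[A^{(0)},A^{(0)}\xi_1]$, and that bootstrapping is the missing argument. The paper derives the same conclusion from the same two identities, so it does not close this gap either.
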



\begin{remark}
\textcolor{.}{
   Note that \(A^{(0)} \cap A^{(1)}=0\): in particular \(1 \not\subset A^{(1)}\). This is a main reason why the similar proofs in   Section {\ref{subsec:FinGen-K(no.2)}} cannot be  applied verbatim to \( K^{(2)}(A:n) \).}
\end{remark}

\begin{proof}{\color{.}

Let \(L^{(0)}=A^{(0)} \otimes G(n-1)\), \(L^{(1)}=A^{(1)} \otimes \xi_n G(n-1)\), and \( L = K^{(2)}(A:n) \).  

 {\textit{Case $n \geqslant 3$:}} For any \(X \subset \{1,2,\ldots, n\}\) such that \(X\) is not \(\emptyset\) or \(\{n\}\),  one can always find \(j \in X\backslash\{n\}\) so that \([\xi_j, b\xi_X]=\pm b\xi_{X \backslash \{j\}}\). 

This shows  that \([L,L]\) differs from \(L\) at only two components (as was stated above). 

Now clearly \([a\xi_Y, b\xi_X]\in [[L,L],[L,L]]\) for all  \(X,Y \not\supset  \{1,2,\ldots, n-1\}\); the equalities (14) imply that these elements span almost all \(\xi\)-components of \([L,L]\) -- except for the two  aforementioned outliers.

So it remains to verify that the \( \xi_1 \cdots \xi_n \) and \( \xi_1 \cdots \xi_{n-1} \) components of \([L,L]\) and \([[L,L],[L,L]]\) are equal. 

Let \([n]=\{1,2,\ldots, n\}\), and \(a^{(i)}, b^{(j)}\) be arbitrary elements from \(A^{(i)}, A^{(j)}\). 

As the first component is spanned by elements of form  \([a^{(0)}\xi_1,b^{(1)}\xi_{[n]\backslash\{1\}}] \) while the second component is spanned by elements of form \([a^{(0)}\xi_1,b^{(0)}\xi_{[n]\backslash\{1,n\}}] \), and \([a^{(1)}\xi_n,b^{(1)}\xi_{[n]}] \), the verification is complete. 

This gives   \([L,L]=[[L,L],[L,L]]\) for \(n \geqslant 3\).

For finite generation, take any \(   X \subset [n]\) where \([n-1] \not\subset X\) and \(\emptyset,\{n\}\neq X\). 

Take any \(i\in [n-1]\backslash X\) and any \(j \in X\backslash\{n\}\). Making use of the ``going up'' equalities \([a \xi_{X' \cup \{i\}}, b \xi_{X'' \cup \{i\}}] = \pm ab \xi_X\) (from the proof of Lemma \ref{lem:Kfingen>2}) and ``going down''  equalities \([\xi_j, b\xi_X]=\pm b\xi_{X \backslash \{j\}}\) (from the current proof), one can obtain the desired result. 

In other words, let \(S=\{a_j\} \subset A^{(0)} \cup A^{(1)}\) be a finite set of generators of \(A\), then for all \(n \geqslant 3\), \( [L, L] \) is generated by the finite set  \( \{a_i \xi_X \mid a_i \in S, [n-1]\not\subset X \subsetneq [n]\} \cap L .\)

 {\textit{Case $n=2$:}} Again let \( L = K^{(2)}(A:n) \). Clearly \[L=(A^{(0)} \otimes 1)  \oplus (A^{(1)} \otimes \xi_2) \oplus (A^{(0)} \otimes \xi_1) \oplus (A^{(1)} \otimes \xi_1\xi_2).\]
 As before, \(L\) and \([L,L]\) differ only at the \(\xi_1\) and \(\xi_1\xi_2\) components (which we already computed). This immediately gives that \( M_2^{(1)} = A^{(1)} \) and \((M_{1}^{(0,0)}+A^{(1)} \cdot A^{(1)})=A^{(0)}\) implies \(L=[L,L].\)

 Now we prove finite generation:
 
 Under the assumption \( M_2^{(1)} = A^{(1)} \) the subspaces \( A^{(1)}\xi_2 \) and \( A^{(0)}\xi_1 \) generate \(L\).  Now assume that \(S= S^{(0)} \bigsqcup S^{(1)}\) is a finite set of generators of \(A\), where \(I,J,R\) are finite sets,  \(S^{(0)}=\{1, a_i\}_{i\in I} \subset A^{(0)}, {} S^{(1)}=\{b_j\}_{j\in J} \subset  A^{(1)}\), and \(T^{(0)}=\{c_r\}_{r\in R} \subset A^{(0)}\) being generators of \(A^{(0)}\) as an \(A^{(1)} \cdot A^{(1)}\)-module. 

To proceed, we list several structural commutator  identities:
\begin{align*}
&[a^{(0)} \xi_1, b^{(0)}] = \pm  ([a^{(0)},b^{(0)}]+\tfrac{1}{2}a^{(0)}d(b^{(0)}))\xi_1, \\
&[a^{(1)} \xi_1\xi_2, b^{(0)}] = \pm  ([a^{(1)},b^{(0)}]+a^{(1)}d(b^{(0)}))\xi_1\xi_2, \\
&[a^{(0)} \xi_1, b^{(0)}  \xi_1] = \pm  a^{(0)}b^{(0)}, \quad\quad\quad\quad [a^{(1)} \xi_2, b^{(1)} \xi_2] = \pm  a^{(1)}b^{(1)},\\
&[a^{(0)} \xi_1, b^{(1)}\xi_1\xi_2] = \pm  a^{(0)}b^{(1)}\xi_2,\quad\quad\ [a^{(1)} \xi_1\xi_2, b^{(1)} \xi_2] = \pm  a^{(1)}b^{(1)}\xi_1.
\end{align*}
It follows from the last two of these identities that the finite set of elements \( \xi_1, a_i \xi_1, c_r \xi_1, b_j \xi_2, b_j \xi_1\xi_2 \), \( i\in I, j\in J, r\in R   \), generate \( A^{(1)}\xi_2 \) and \( A^{(0)}\xi_1 \) and therefore \( K^{(2)}(A:2) \).}
\end{proof}






\subsection{Exceptional Cheng-Kac Superalgebras} \mbox{}\\

In \cite{martinez2003} it is shown that for any associative commutative superalgebra \( A \) with an even derivation \( d \), the Lie superalgebra \( CK(A,d) \) is perfect. If the superalgebra \( A \) is finitely generated then so is \( CK(A,d) \).

\section{Universal Central Extensions}

\subsection{Basic Definitions} \mbox{}\\

Let \( L \) be a perfect Lie superalgebra. An epimorphism \( \varphi : \widetilde{L} \to L \) is said to be \textit{central} if the superalgebra \( \widetilde{L} \) is perfect and the kernel \( \ker \varphi \) lies in the center of the superalgebra \( \widetilde{L} \).

There exists a unique universal central extension \( \widehat{L} \to L \) such that for any other central extension \( \widetilde{L} \to L \) there exists a homomorphism \( \widehat{L} \to \widetilde{L} \) that makes the appropriate diagram commutative (I. Schur\cite{schur1904}, H. Garland\cite{garland1980}).

\subsection{Constructing the Universal Central Extension} \mbox{}\\

Let \( h \in L_{\bar{0}} \) be an element such that all eigenvalues of \( \operatorname{ad}(h) \) are integers if $charF=0$ and belong to $\{0,1,\ldots, p-1\}$ if $charF=p$, and the superalgebra \( L \) decomposes into a sum of root spaces
\[
L = \sum_{i  } L_i,
\]
where \( L_i \) is the root space \textcolor{.}{(i.e. generalized eigenspace)}  that belongs to the eigenvalue \( i \).

\textcolor{.}{Furthermore} suppose that the superalgebra \( L \) is generated by \(
\sum\limits_{i \neq 0  } L_i.
\) This assumption immediately implies that \(L\) is perfect. Choose a finite generating system \( \{a_i\in L, i \in I \}\)  that consists of even or odd elements lying in \(\bigcup\limits_{k \neq 0  } L_k.\)
If \( a \in L_k \) then we say that \( \deg(a) = k \).

Consider the free superalgebra \( L(X) \) on the set of free generators \( X = \{x_i, i \in I\} \), where an element \( x_i \) has the same parity as \( a_i \). We define a \( \mathbb{Z} \)-grading on \( L(X) \) by assigning degrees \( \deg(x_i) = \deg(a_i) \). Consider also the epimorphism \( \pi : L(X) \rightarrow L \), \( x_i \mapsto a_i \), \( i \in I \). Let \( R = \ker \pi \), \( R = \sum\limits_{k \in \mathbb{Z}} R_k \). Let \( R^* = \sum\limits_{k \neq 0  } R_k \).

\begin{lemma}
\( \widehat{L} = \langle X \mid R^* = (0) \rangle \) is a universal central extension of \( L \).
\end{lemma}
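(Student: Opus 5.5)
We must show $\widehat{L}=L(X)/I$ is a universal central extension of $L$, where $I$ is the ideal generated by $R^*$. Here $L(X)$ is $\mathbb{Z}$-graded and $R$ is a graded ideal, since $\pi$ respects degrees: $\deg(a_i)$ is the $\operatorname{ad}(h)$-eigenvalue and $L=\sum L_i$. I would take as the model of the universal central extension the quotient $L(X)/[R,L(X)]$. Since $L$ is perfect and generated by the $a_i$, standard arguments show that $L(X)/[R,L(X)]$, or more precisely its derived algebra, is the universal central extension. The plan is to prove that the ideal $I$ generated by $R^*$ and the ideal $[R,L(X)]$ coincide, up to passing to the derived algebra.

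\textbf{Step 1: $R/I$ is central in $L(X)/I$.} The ideal generated by $R^*$ is spanned by $[\ldots[r,x_{i_1}],\ldots,x_{i_s}]$ with $r\in R^*$. I must show that $[R_0,x_i]\subseteq I$ for every generator $x_i$. Since $\deg x_i\neq 0$, $[R_0,x_i]\subseteq R_{\deg x_i}\subseteq R^*$. Hence $[R,L(X)]\subseteq I$, because $L(X)$ is generated by the $x_i$ and $[R,\cdot]$ of a generator already lands in $I$; Jacobi then propagates this to arbitrary elements. Thus $R/I$ is central, and $\widehat{L}\to L$ is a central extension.

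\textbf{Step 2: $\widehat{L}$ is perfect.} This follows because $\widehat{L}$ is generated by images of $x_i$ of nonzero degree. Choose the grading element $\hat h$ acting on $\widehat{L}$ by $\deg$; this is a derivation, well defined since $I$ is graded. An element $y$ of degree $k\neq 0$ satisfies $y=\frac1k\,\hat h(y)$. To realize this inside $[\widehat L,\widehat L]$, I would use instead that each $x_i$ of degree $k\ne0$ maps to $a_i\in L_k$, and $a_i=\frac1k[h,a_i]$ with $h\in L=\pi(L(X))$. Lift $h$ to some $\tilde h\in L(X)_0$; then $x_i-\frac1k[\tilde h,x_i]\in R_k\subseteq R^*$, which is $0$ in $\widehat L$. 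Hence the generators lie in $[\widehat L,\widehat L]$, and $\widehat{L}$ is perfect. (In characteristic $p$ the eigenvalues are nonzero mod $p$, so $k$ is invertible.)

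\textbf{Step 3: universality.} Let $\varphi:\widetilde L\to L$ be a central extension. Lift each $a_i$ to some $\tilde a_i\in\widetilde L$ and define $\psi:L(X)\to\widetilde L$. Then $\psi(R)\subseteq\ker\varphi$ is central. I need $\psi(R^*)=0$. For $r\in R_k$ with $k\ne0$, $r=\frac1k[\tilde h,r]$ modulo $[R,L(X)]$ up to the correction coming from the derivation identity. Use Step 2's trick: $r\equiv \frac1k[\tilde h,r]$, so $\psi(r)=\frac1k[\psi(\tilde h),\psi(r)]=0$ by centrality. The main obstacle is justifying $k r-[\tilde h,r]\in$ something killed by $\psi$. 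Since $\operatorname{ad}\tilde h$ and the degree derivation agree on generators only modulo $R$, their difference $D$ is a derivation with $D(x_i)\in R$, and I would show by induction on commutator length that $D(L(X))\subseteq R$ and $D(R)\subseteq [R,L(X)]+\ldots$, whose image under $\psi$ is $0$ because $\psi(R)$ is central and $\widetilde L$ is perfect. Uniqueness of the induced map follows from perfectness of $\widehat L$.
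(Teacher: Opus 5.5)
Your Step 1 is exactly the paper's argument. Step 3, however, has a genuine gap: you lift each $a_i$ to an \emph{arbitrary} $\tilde a_i\in\widetilde L$, and once $\ker\varphi\neq 0$, $\psi(R^*)=0$ fails for all but one choice of lifts. Your own Step 2 shows why. The element $r=x_i-\frac1k[\tilde h,x_i]$ lies in $R_k\subseteq R^*$. Also $\tilde h\in L(X)_0$ has no linear part, since all generators have nonzero degree, and central elements drop out of brackets. So replacing $\tilde a_i$ by $\tilde a_i+z$ with $0\neq z\in\ker\varphi$ changes $\psi(r)$ by exactly $z$, and the relations in $R^*$ force one specific lift. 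Your derivation $D=\deg-\operatorname{ad}\tilde h$ shows where the argument breaks. If $\sum_i\lambda_i x_i$ is the linear part of $r\in R_k$, then $D(r)\equiv\sum_i\lambda_i D(x_i)$ modulo $[R,L(X)]$, so $\psi(r)=\frac1k\sum_i\lambda_i\,\psi(D(x_i))$. Here $\psi(D(x_i))=k\tilde a_i-[\psi(\tilde h),\tilde a_i]$ is a central element that depends on the lift. Neither centrality of $\psi(R)$ nor perfectness of $\widetilde L$ makes it vanish; your ``$\ldots$'' hides exactly this term.

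The missing idea, which is the core of the paper's proof, is to choose the lifts in root spaces of $\widetilde L$. Lift $h$ to $h'$. If a polynomial $p(\operatorname{ad}h)$ kills $\varphi(a)$, then $p(\operatorname{ad}h')a\in\ker\varphi$ is central, so $\operatorname{ad}(h')\,p(\operatorname{ad}h')a=0$. Hence $\widetilde L=\sum_k\widetilde L_k$, the generalized eigenspaces of $\operatorname{ad}h'$ with integer eigenvalues, and $\varphi(\widetilde L_k)=L_k$. Take $\tilde a_i\in\widetilde L_{\deg a_i}$. Then for $r\in R_k$, $k\neq0$, $\psi(r)$ is central and lies in $\widetilde L_k$. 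There $\operatorname{ad}(h')-k$ is nilpotent while $\operatorname{ad}(h')$ kills the center, so $(-k)^n\psi(r)=0$ and $\psi(r)=0$.

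A smaller point: Step 2 and your $D$ both use $[h,a_i]=ka_i$, but in the paper's setting $L_k$ is only a generalized eigenspace. You need $(\operatorname{ad}\tilde h-k)^n x_i\in R_k$ instead, as the paper does.

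Alternatively, you could have carried out your stated plan literally. Using $I=R^*+[R,L(X)]$ and Step 2, it suffices to show $R^*\cap[L(X),L(X)]\subseteq[R,L(X)]$. There the linear part is zero, so your $D$-argument works (with a power of $D$ if $\operatorname{ad}h$ is not semisimple). You could then cite that $[L(X),L(X)]/[R,L(X)]$ is the universal central extension.
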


\begin{proof}
For an element \( f \in L(X) \) let \( \hat{f} \) denote its image in \( \widehat{L} \). In particular, let \( \hat{x}_i \) be the image of a generator \( x_i \). 

There exists an element \( f \in L(X)_{\bar{0}} \) such that \( h = f(a_1, a_2, \ldots) \); for each \( i \in I \), there exists also an integer \( n_i \geqslant 1 \) such that
\[
(\operatorname{ad}(f(a_1, a_2, \ldots)) - \deg(a_i))^{n_i} a_i = 0.
\]
Hence
\[
(\operatorname{ad}(f({x}_1, {x}_2, \ldots)) - \deg(a_i))^{n_i} \cdot {x}_i \in R_{\deg(a_i)} \subseteq R^*.
\]
It implies that
\(
\hat{x}_i \in [\widehat{L}, \widehat{L}],
\) so the superalgebra \( \widehat{L} \) is perfect.

Let us show that 
\[
\operatorname{id}_{L(X)}(R^*) \supseteq [R, L(X)], \tag{20}
\] where \(\operatorname{id}_{L(X)}(R^*)\) is the ideal of \( L(X) \) generated by \(R^*\).

Indeed, \( [R_0, \hat{x}_i] \subseteq R_{\deg(\hat{x}_i)} \subseteq R^* \), which implies the inclusion (20). From (20) it follows that the kernel of the natural epimorphism \( \pi : \widehat{L} \rightarrow L \), \( \hat{x}_i \mapsto a_i \), \( i \in I \), lies in the center of \( \widehat{L} \). Therefore we have shown that \( \widehat{L} \to L \) is a central extension.

Now let \( L' \) be a perfect Lie superalgebra. Let \( \sigma : L' \to L \) be an epimorphism and let \( \ker \sigma \subseteq Z(L') \). Let \( h' \in L' \) be a preimage of the element \( h \) under the epimorphism \( \sigma \), \( \sigma(h') = h \).

For an arbitrary element \( a \in L' \) there exists \( n \geqslant 1 \) such that
\[
\prod_{i=-n}^n (\operatorname{ad}(h) - i)^{n} \sigma(a) = 0.
\]
Hence,
\[
\prod_{i=-n}^n (\operatorname{ad}(h') - i)^{n}  a \in Z(L'),
\]
hence
\[
\operatorname{ad}(h')   \prod_{i=-n}^n (\operatorname{ad}(h') - i)^{n}   a = 0.
\]
This implies that
\[
L' = \sum_{i \in \mathbb{Z}} L'_i,
\]
where \( L'_i \) is the root space of \( \operatorname{ad}(h') \) that corresponds to the eigenvalue \( i \).

Let \( a'_i \in L'_{\deg(a_i)} \) be a preimage of the element \( a_i \). Since elements \( a'_i \), \( i \in I \), generate \( L' \) modulo the center and the algebra \( L' \) is perfect, it follows that the elements \( a'_i \), \( i \in I \), generate \( L' \).

Let \( r(x_1, x_2, \ldots) \in R_i \), \( i \neq 0 \). We have \( r(a_1, a_2, \ldots) = 0 \), hence \( r(a'_1, a'_2, \ldots) \in Z(L') \). We have
\[
(\operatorname{ad}(h') - i)^n r(a'_1, a'_2, \ldots) = 0
\]
for some \( n \), which implies \( r(a'_1, a'_2, \ldots) = 0 \). Hence \( \hat{x}_i \mapsto a'_i \), \( i \in I \), extends to an epimorphism \( \widehat{L} \to L' \). This completes the proof of the lemma.
\end{proof}

\section{Sufficient Conditions for Finite Presentability of Universal Central Extensions}\label{sec:CondFinPresUCE} 


In this chapter we formulate some abstract conditions for finite presentability of universal central extensions. In the next chapter we will verify these conditions for superconformal algebras.

Suppose that a Lie superalgebra \( L \) contains an element \( h \in L_{\bar{0}} \) such that all eigenvalues of \( \operatorname{ad}(h) \) are integers and \( L \) decomposes into a sum of root spaces
\[
L = \sum_{i=-m}^{m} L_i,
\]
and the characteristic of the field \( F \) is zero or greater than \(2m\).

We assume that the superalgebra \( L \) is generated by \( \sum_{i \neq 0} L_i \). In other words,
\[
L_0 = \sum_{i=1}^{m} [L_{-i}, L_i].
\]

Suppose that \( (L_{\bar{0}})_0 \) contains a finite dimensional abelian subalgebra \( V \) with  \( h \in V \). The subspace \( V \) lies in a bigger   finite  dimensional abelian subalgebra \( \widetilde{V} \subset   (L_{\bar{0}})_0   \).


\textbf{Main Assumptions.}

We make the following assumptions:

(1) There exists a bilinear mapping \( f: V \times V \to \widetilde{V} \). For each \( i \neq 0 \), let \( S_i \) be the \( F \)-span of all operators
\[
\operatorname{ad}(v') \operatorname{ad}(v'') - i \operatorname{ad}(f(v', v'')) \in \operatorname{End}_F(L_i), \quad\quad \mbox{ where } v', v'' \in V.
\]
We assume that for each  \( i \neq 0 \) 
\[
S_i^2 = (0).
\]

(2) Let \( \mathcal{A} \) denote the associative commutative subalgebra of \( \operatorname{End}_F(L) \) generated by the identity operator and by all operators \( \operatorname{ad}(v): L \to L \), \( v \in V \). We assume that all root spaces \( L_i \) are finitely generated \( \mathcal{A} \)-modules. More precisely, there exist finite dimensional graded subspaces \( X_i \subseteq L_i \), \( i \neq 0 \), such that \( L_i = \mathcal{A} X_i \).

(3) We assume that
\[
[V, [V, L_0]] = (0). \tag{21}
\]

It implies that for arbitrary elements \( x \in L_{-i} \), \( y \in L_i \), \( i \neq 0 \), we have 
\[
[\operatorname{ad}(V)^2 x, y] \subseteq [\operatorname{ad}(V)x, \operatorname{ad}(V)y] + [x, \operatorname{ad}(V)^2 y].
\]

Our aim in this chapter is to prove the following proposition:

\begin{proposition}\label{prop: 4.1}
The universal central extension \( \widehat{L} \) \textcolor{.}{(of \(L\) satisfying the above assumptions) } is finitely presented.
\end{proposition}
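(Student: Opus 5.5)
We construct an explicit finite presentation and show that it defines $\widehat{L}$ by means of the lemma of Section 3, which says that $\widehat{L}=\langle X\mid R^*=(0)\rangle$. Choose a basis $v_1,\dots,v_k$ of $V$ (with $h$ among them) and the finite-dimensional subspaces $X_i\subseteq L_i$, $i\neq 0$, from Main Assumption (2). Since $h=\sum_i[L_{-i},L_i]$-type combinations exist, each $v_s$ and each $f(v_s,v_t)$ lies in $L_0=\sum_{i\ge 1}[L_{-i},L_i]$. So we may fix finitely many elements of $\bigcup_{i\neq0}L_i$ expressing them. The finite generating set $Y$ consists of bases of the $X_i$, $i\neq 0$, together with these auxiliary elements. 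In the free superalgebra $L(Y)$ we fix Lie words $\tilde v_s$ and $\tilde f_{st}$ of degree $0$ mapping to $v_s$ and $f(v_s,v_t)$.

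The finite set of relations $R_0^{\mathrm{fin}}\subseteq R^*$ consists of the following four families.
\begin{enumerate}
\item For each generator $x\in X_i$, the relation $(\operatorname{ad}\tilde h-i)x=0$.
\item For generators $x\in X_i$, the relations $\operatorname{ad}(\tilde v_s)\operatorname{ad}(\tilde v_t)x=\operatorname{ad}(\tilde v_t)\operatorname{ad}(\tilde v_s)x$.
\item For $P$ a monomial of degree $\le 2$ in the $\operatorname{ad}\tilde v$'s and suitable $i,s,t$, the relations $\bigl(\operatorname{ad}\tilde v_s\operatorname{ad}\tilde v_t-i\operatorname{ad}\tilde f_{st}\bigr)P\,x=$ (a fixed lift of its value). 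Together with the $S_i^2=0$ identity, these let us reduce any $\operatorname{ad}$-monomial in $\tilde v$'s of length $\ge 2$ to lower length plus $\operatorname{ad}\tilde f$ terms.
\item For all pairs of generators $x\in X_{-i}$, $y\in X_j$, $i,j\neq0$, with $|i|,|j|\le m$, the relations $[P x, Q y]=\text{lift}$, where $P,Q$ are $\operatorname{ad}$-monomials in $\tilde v$'s of length $\le 2$ (or $\le 3$), and the right-hand side is an expression in terms of generators whenever $-i+j\neq 0$. Degrees outside $[-m,m]$ give the relation $0$.
\end{enumerate}
All of these lie in $R^*$ because their degree is nonzero. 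Let $\widetilde L=\langle Y\mid R_0^{\mathrm{fin}}\rangle$; it maps onto $\widehat L$. The goal is to show that this map is an isomorphism, i.e.\ that all of $R^*$ already vanishes in $\widetilde L$.

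The key steps are as follows.
\begin{enumerate}
\item Show that in $\widetilde L$ the subspace $\widetilde L_i:=\widetilde{\mathcal A}X_i$, where $\widetilde{\mathcal A}$ is the algebra generated by $\operatorname{ad}\tilde v_s$ and $\operatorname{ad}\tilde f_{st}$, is spanned by elements $P x$ with $P$ of bounded length modulo the image of $\operatorname{ad}\tilde f$. Here the char hypothesis ($0$ or $>2m$, so $i$ is invertible) lets us solve for $\operatorname{ad}\tilde f$. The key point is that commutativity of the $\operatorname{ad}\tilde v$'s on $\widetilde L_i$ follows from the finitely many relations, because the set of elements on which two operators commute is stable under $\operatorname{ad}\tilde v$ once the $\tilde v$'s commute among themselves. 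The latter holds by Jacobi, since $[\tilde v_s,\tilde v_t]$ is a degree-$0$ element acting as zero on the generators. For the same reason, $\operatorname{ad}\tilde v$ and $\operatorname{ad}\tilde f$ commute.
\item Prove by induction on the total length of $\operatorname{ad}$-monomials that $[\widetilde L_{-i},\widetilde L_j]\subseteq\widetilde L_{j-i}$ for $j\neq i$, and that the bracket is given by the same formula as in $L$. For this, use Jacobi to move an $\operatorname{ad}\tilde v$ from one factor to the other. This is exactly where Main Assumption (3), $[V,[V,L_0]]=0$, and its consequence $[\operatorname{ad}(V)^2x,y]\subseteq[\operatorname{ad}(V)x,\operatorname{ad}(V)y]+[x,\operatorname{ad}(V)^2y]$ are needed, together with the relation $S_i^2=0$ to cut lengths. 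Once both monomials are short, family (4) applies.
\item Conclude that $\widetilde L^*=\sum_{i\ne0}\widetilde L_i$ is closed under brackets landing in nonzero degree, and that the natural map $\widetilde L_i\to L_i$ is injective. Injectivity follows by comparing $\widetilde{\mathcal A}$-module structures: the relations force $\widetilde L_i$ to be a quotient of the same module presentation as $L_i$. Hence every element of $R^*$ vanishes in $\widetilde L$, so $\widetilde L=\widehat L$.
\end{enumerate}

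The main obstacle is the induction in Step 2, in particular controlling brackets $[Px,Qy]$ landing in degree $0$. These produce elements of $\widetilde L_0$ that cannot be described by relations in $R^*$, and we need $[\tilde v,[\tilde v,\cdot]]$ to kill them modulo the finitely many imposed relations. I would first try to prove that $[\tilde v_s,[\tilde v_t,[Px,Qy]]]$ vanishes in $\widetilde L$ whenever $\deg=0$. Expanding it via Jacobi yields brackets of nonzero degree already known to be correct, whose sum vanishes in $L$ and hence in $\widetilde L$ by the induction hypothesis. This yields the displayed inclusion inside $\widetilde L$, and the length reduction then goes through.
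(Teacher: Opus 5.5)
Your architecture matches the paper's: finitely many nonzero-degree relations, closure of $\sum_{i\neq0}\mathcal A X_i+\sum_i[\mathcal A X_i,\mathcal A X_{-i}]$, injectivity of $\mathcal A X_i\to L_i$ from a module presentation, and a central kernel. But the two inductions that carry the proof are not actually supplied, and the route you sketch for the harder one cannot work.

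\emph{Step 2.} Take $i+j\neq0$ and $w=f(v,v)$. Jacobi only moves $\operatorname{ad}(v)$'s between the factors; it does not lower the total length. The definition of $S_i$ only lets you trade $\operatorname{ad}(v)^2x$ for $i\operatorname{ad}(w)x$ modulo $S_ix$. If you then move $\operatorname{ad}(w)$ across and use $j\operatorname{ad}(w)y\equiv\operatorname{ad}(v)^2y$ modulo $S_jy$, you get back $[\operatorname{ad}(v)^2x,y]$ with coefficient $-i/j$. The argument closes only because $1+i/j\neq0$. This is Lemma 4.2(1), obtained from Lemma 4.1(1) with $\gamma=i/j\neq-1$. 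It expresses $[\operatorname{ad}(v)^2x,y]$ through $\operatorname{ad}(v)$ of shorter brackets, $[w,[x,y]]$, and terms carrying one more factor from $S$, all with denominator $i+j$. This is where $\operatorname{char}F>2m$ is used: you need $i+j$ invertible, not merely $i$. It also forces the induction to run over $\operatorname{ad}(V)^pS_i^kX_i$ with $k\le1$, and the base relations (R3) must include $k,l=1$. Assumption (3) plays no role in this step.

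\emph{The degree-$0$ step.} Expanding $[\tilde v_s,[\tilde v_t,[Px,Qy]]]$ by Jacobi gives brackets $[P'x,Q'y]$ that again land in degree $0$. ``Vanishes in $L$, hence in $\widetilde L$'' is exactly what is unavailable in degree $0$, since that is where the central kernel lives.

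The claim can be false already in $\widehat L$. Take $L=W(F[t,t^{-1}]:1)$, $h=\xi\frac{\partial}{\partial\xi}$, $U\ni t^{\pm1}$ (Section 5.2). Then $\psi(t^kh,t^{-k}h)=k$ extends to a nontrivial $2$-cocycle of $L$ (the $N=2$ central charge). Its other nonzero values are $\psi(t^kh,t^{1-k}\frac{\partial}{\partial t})=\frac{k^2}{2}$ and $\psi(t^m\frac{\partial}{\partial\xi},\xi t^{1-m}\frac{\partial}{\partial t})=-\frac{m^2}{2}$. Put $d=t\frac{\partial}{\partial t}=[\frac{\partial}{\partial\xi},\xi t\frac{\partial}{\partial t}]\in[L_{-1},L_1]$. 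Then $[\widehat{th},[\widehat{t^{-1}h},\widehat d\,]]=[\widehat{th},\widehat{t^{-1}h}]\neq0$.

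Step 1 contains the same slip: $[\tilde v_s,\tilde v_t]$ can be shown central, not zero. Only centrality of these degree-$0$ elements is automatic, and only their trivial action on nonzero degrees is needed.

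The paper obtains $[[\mathcal A X_i,\mathcal A X_{-i}],\mathcal A X_j]\subseteq\mathcal A X_j$ (Lemma 4.4) by imposing extra relations. These are the triple-bracket relations (R4), with total $\operatorname{ad}(V)$-length $\le4$, as a base, together with (R8). It then runs a separate case analysis using Lemma 4.2(2). Your list has no triple brackets. Without Lemma 4.4 you get neither closure of the subspace nor centrality of the kernel, so Step 3 does not follow.

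The example also shows that (R8) should be imposed as $[[V,[V,[X_i,X_{-i}]]],X_j]=[[V,[V,[X_i,X_{-i}]]],Y_j]=0$ for $j\neq0$. These lie in $R^*$ and suffice for Lemma 4.4. You also need (R1)-type relations on the auxiliary generators, covering $[\tilde v,\tilde f]$ and $[\tilde f,\tilde f]$. Finally, the module relations (R7) must be imposed explicitly.
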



\noindent\textbf{Specifying a Finite System of Relations (of \(\widehat{L}\)).}
\mbox{}

We will consider several finite families of relations that hold in the superalgebra \( L \). There exist finite dimensional graded subspaces \( Y_i \subseteq L_i \), \( i \neq 0 \), such that
\[
\widetilde{V} \subseteq \sum_{i=1}^{m} [Y_{-i}, Y_i].
\]
This implies that the subspace \( \sum_{i \neq 0} X_i + \sum_{i \neq 0} Y_i \) generates \( L \), where \(X_i\) are the subspaces   defined in part (2) of the main assumptions. We will view all relations as relations in the generators \( \sum_{i \neq 0} X_i + \sum_{i \neq 0} Y_i \).
 \footnote{
Let \( \{e_p\}_p, \{f_q\}_q \) be bases of the subspaces $V, \widetilde{V}$ (in \(L\)) respectively. We \textbf{fix} expressions of $e_p, f_q$  in basis elements of $X_i$'s, $Y_i$'s: \(e_p=e_p(x,y), f_q=f_q(x,y).\) \textbf{Caution:} appearances of \({V} \mbox{ (resp. } \widetilde{V})\) in the relations are intended only as \textbf{shorthand} for linear combinations of  $e_p(x,y)$ (resp. $f_q(x,y)$).}

Consider the following relations:


We consider the actions of \(\widetilde{V}\) and \( S_i\) on the generators \footnote{Following the previous footnote, relation (R1) mean
\[[[f_{q_1}(x,y), f_{q_2}(x,y)], X_{\pm i}]=[[f_{q_1}(x,y), f_{q_2}(x,y)], Y_{\pm i}]=0.\]
In the same way we understand all other relations.}

\begin{align*}
[[\widetilde{V}, \widetilde{V}], X_{\pm i}] &= [[\widetilde{V}, \widetilde{V}], Y_{\pm i}] = (0), \quad 1 \leqslant i \leqslant m, \tag{R1} \\
S_i^2 X_i &= (0), \quad i \neq 0, \tag{R2}
\end{align*}

\textit{\textbf{For all \( -m \leqslant i,j \leqslant m \) such that \( i \neq 0, j \neq 0, i+j \neq 0 \)}}, we consider the inclusions
\[
[\operatorname{ad}(V)^p S_i^k X_i, \operatorname{ad}(V)^q S_j^l X_j] \subseteq \mathcal{A} X_{i+j}, \tag{R3}
\]
where \( p+q \leqslant 1 \); \( 0 \leqslant k, l \leqslant \textcolor{.}{1}\). If \( i+j \notin [-m, m] \), then it is assumed that the right hand side is \( (0) \).

We also have
\[
[[\operatorname{ad}(V)^p S_i^k X_i, \operatorname{ad}(V)^q S_{-i}^l X_{-i}], \operatorname{ad}(V)^r S_i^t X_i] \subseteq \mathcal{A} X_i \tag{R4}
\]
for any \( -m \leqslant i \leqslant m \), \( i \neq 0 \), \( p+q+r \leqslant 4 \); \( k, l, t =0 \mbox{ or }1 \).

\[
Y_i \subseteq \mathcal{A} X_i, \quad i \neq 0, \tag{R5}
\]
and
\[
[\widetilde{V}, X_i] \subseteq \mathcal{A} X_i, \quad i \neq 0, \tag{R6}
\]
Inclusions (R3 - R6)  can be expressed as a finite system of relations.

\noindent{}(R7) is the set of relations that present the finitely presented \footnote{\textcolor{.}{\(\mathcal{A}\) is Noetherian.}} \( \mathcal{A} \)-module \( L_i \).

\noindent{}(R8) is the finite system of relations that reflect the equalities \[ [V, [V, [X_i, X_{-i}]]] = 0 , \  i \neq 0 .\]

Let \( \widetilde{L} \) be the Lie superalgebra presented by generators \( X_i, Y_i \), \( i \neq 0 \), and by the relations (R1)-(R8).

We identify the subspaces \( V \subset \widetilde{V} \) with subspaces in \( \widetilde{L} \) in the natural way, \( V \subset \widetilde{V} \subset \widetilde{L}_{\bar{0}} \). Abusing notation we denote the subalgebra of \( \operatorname{End}_F(\widetilde{L}) \) generated by \( \operatorname{Id} \) and the adjoint operators \( \operatorname{ad}(v): \widetilde{L} \to \widetilde{L} \), \( v \in V \), as \( \mathcal{A} \).

\begin{remark}
 \textcolor{.}{One should use caution when comparing \(V, \widetilde{V} \subset L \) and their counterparts \(V, \widetilde{V} \subset \widetilde{L}:\) The former are \textup{abelian subalgebras} of \(L\), while the latter are \textup{only plain subspaces} of a finitely presented  Lie algebra \(\widetilde{L}\) that are \textbf{not}  guaranteed to be abelian.} 
\end{remark}



\noindent\textbf{Technical Lemmas.}
\mbox{}

\begin{lemma}\label{lem:4.1}
Let \( F[T_1, \ldots, T_k] \) be the algebra of polynomials \textcolor{.}{in \( k \) variables}.
\begin{enumerate}
\item[(1)] Let \( \gamma \in F \), \( \gamma \neq -1 \). The element \( T_1^2 \) lies in the ideal of \( F[T_1, T_2] \) generated by \( T_1 + T_2 \) and \( T_1^2 + \gamma{T_2^2} \). More precisely,
\[
T_1^2 = \frac{\gamma}{1+\gamma} (T_1 + T_2)(T_1 - T_2) + \frac{1}{1+\gamma} (T_1^2 + \gamma{T_2^2}).
\]

\item[(2)] \( T_1 T_3 \) lies in the ideal of \( F[T_1, T_2, T_3] \) generated by \( T_1 + T_2 + T_3 \), \( T_1^2 - T_2^2 + T_3^2 \). More precisely,
\[
T_1 T_3 = \frac{1}{2} ((T_1 + T_2 + T_3)(T_1 - T_2 + T_3) - (T_1^2 - T_2^2 + T_3^2)).
\]
\end{enumerate}
\end{lemma}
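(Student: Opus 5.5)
Both items are checked by direct expansion in the polynomial ring, so the plan is simply to multiply out the right-hand sides and compare coefficients. The only side condition to track is that every scalar we divide by is invertible in \(F\).

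For part (1), the hypothesis \(\gamma \neq -1\) makes \(1+\gamma\) invertible, so the coefficients \(\frac{\gamma}{1+\gamma}\) and \(\frac{1}{1+\gamma}\) lie in \(F\).
\begin{itemize}
\item Using \((T_1+T_2)(T_1-T_2) = T_1^2 - T_2^2\), the right-hand side becomes
\[
\frac{\gamma}{1+\gamma}\,(T_1^2 - T_2^2) + \frac{1}{1+\gamma}\,(T_1^2 + \gamma T_2^2).
\]
\item The coefficient of \(T_1^2\) is \(\frac{\gamma+1}{1+\gamma} = 1\).
\item The coefficient of \(T_2^2\) is \(\frac{-\gamma+\gamma}{1+\gamma} = 0\).
\end{itemize}
So the right-hand side equals \(T_1^2\). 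It is visibly a combination, with coefficients in \(F[T_1,T_2]\), of the two generators \(T_1+T_2\) and \(T_1^2+\gamma T_2^2\). Hence \(T_1^2\) lies in the ideal they generate.

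For part (2), the hypothesis \(\operatorname{char} F \neq 2\) makes \(\tfrac12\) available in \(F\).
\begin{itemize}
\item Write \(u = T_1 + T_3\). Then the first product is
\[
(u + T_2)(u - T_2) = u^2 - T_2^2 = T_1^2 + 2T_1T_3 + T_3^2 - T_2^2.
\]
\item Subtracting \(T_1^2 - T_2^2 + T_3^2\) leaves exactly \(2T_1T_3\).
\item Multiplying by \(\tfrac12\) gives \(T_1T_3\).
\end{itemize}
The expression is again an \(F[T_1,T_2,T_3]\)-combination of \(T_1+T_2+T_3\) and \(T_1^2-T_2^2+T_3^2\), so \(T_1T_3\) lies in the ideal they generate.

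There is no real obstacle: the computations are immediate. The only points needing care are the two invertibility conditions, \(\gamma \neq -1\) in part (1) and \(\operatorname{char} F \neq 2\) in part (2), and both are given by hypothesis.
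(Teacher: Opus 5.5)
Your proposal is correct and follows the paper's approach: the paper proves both identities by direct expansion ("both computations are straightforward"), which is exactly what you do. You also correctly note that \(\gamma \neq -1\) and the paper's standing assumption \(\operatorname{char} F \neq 2\) are what make the coefficients legitimate elements of \(F\).
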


\begin{proof}
Both computations are straightforward.
\end{proof}


In what follows \(\widetilde{L}\) denotes the finitely presented algebra defined above and the same notation will be used.

\begin{lemma}\label{lem:4.2}  
The following identities are satisfied in \(\widetilde{L}\):

\begin{enumerate}
\item[(1)] Let \( i,j \in [-m, m] \setminus \{0\} \), \( i+j \neq 0 \). Let \( x \in \mathcal{A} X_i \), \( y \in \mathcal{A} X_j \), \( v \in V \), \( w = f(v, v) \). Then
\[
[[v, [v, x]], y] = \frac{i}{i+j} [v, [v, [x, y]] - {\color{.}{2}}[x, [v, y]]] + \frac{\color{.}j}{i+j} [( \operatorname{ad}(v)^2 - i \operatorname{ad}(w))x, y] 
\]
\[
+ \frac{\color{.}i}{i+j} [x, ( \operatorname{ad}(v)^2 - j \operatorname{ad}(w))y] + \frac{ij}{i+j} [w, [x, y]].
\]

\item[(2)] Let \( i \in [-m, m] \setminus \{0\} \); \( x, z \in \mathcal{A} X_i \), \( y \in \mathcal{A} X_{-i} \), \( v \in V \), \( w = f(v, v) \). Then
\[
\textcolor{.}{2}[[[v, x], y], [v, z]] =  [v, [[[v, x], y], z] - [[x, [v, y]], z] + [[x, y], [v, z]]]
\]
\[
- [[( \operatorname{ad}(v)^2 - i \operatorname{ad}(w))x, y], z] + [[x, ( \operatorname{ad}(v)^2 + i \operatorname{ad}(w))y], z] 
\]
\[
- [[x, y], ( \operatorname{ad}(v)^2 - i \operatorname{ad}(w))z] \textcolor{.}{-} i[w, [[x, y], z]].\]
\end{enumerate}
\end{lemma}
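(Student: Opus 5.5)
Both identities should hold in an arbitrary Lie superalgebra for even elements $v,w$, so the plan is to verify them using only the (super) Jacobi identity, with no appeal to the relations (R1)--(R8). Since $v\in V$ and $w=f(v,v)\in\widetilde V$ are even, $\operatorname{ad}(v)$ and $\operatorname{ad}(w)$ act as ordinary (sign-free) derivations of the bracket. The hypotheses $x\in\mathcal A X_i$, $y\in\mathcal A X_j$, etc., only serve to fix the scalars $i,j$; they play no role in the algebra.

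For (1), the key step is to read $[v,[v,[x,y]]-2[x,[v,y]]]$ as $\operatorname{ad}(v)$ applied to the bracketed difference. Expanding with the Leibniz rule gives
\[
\operatorname{ad}(v)^2[x,y]=[v^2x,y]+2[vx,vy]+[x,v^2y],\qquad \operatorname{ad}(v)[x,[v,y]]=[vx,vy]+[x,v^2y],
\]
where $v^2x$ is shorthand for $\operatorname{ad}(v)^2x$. Hence the first term of the right-hand side equals $\frac{i}{i+j}\bigl([v^2x,y]-[x,v^2y]\bigr)$. Next I expand $[w,[x,y]]=[wx,y]+[x,wy]$. Then the $\operatorname{ad}(w)$-contributions cancel: $-\frac{ij}{i+j}[wx,y]-\frac{ij}{i+j}[x,wy]+\frac{ij}{i+j}([wx,y]+[x,wy])=0$. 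The remaining $v^2$-terms collect as
\[
\Bigl(\tfrac{i}{i+j}+\tfrac{j}{i+j}\Bigr)[v^2x,y]+\Bigl(-\tfrac{i}{i+j}+\tfrac{i}{i+j}\Bigr)[x,v^2y],
\]
which is $[v^2x,y]$. This is the left-hand side, and the hypothesis $i+j\neq 0$ is needed only for the division.

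For (2), I proceed the same way. First, the three $\operatorname{ad}(w)$ terms contribute $+i\bigl([[wx,y],z]+[[x,wy],z]+[[x,y],wz]\bigr)$. By the derivation property this is exactly $i[w,[[x,y],z]]$, so it cancels the final term. What remains is the $v$-part: I apply $\operatorname{ad}(v)$ to $[[vx,y],z]-[[x,vy],z]+[[x,y],vz]$ and expand each summand into three terms by Leibniz. From the result I subtract $[[v^2x,y],z]$ and $[[x,y],v^2z]$, and add $[[x,v^2y],z]$. I then collect like monomials in the placement of the two copies of $v$ among $x,y,z$. The terms with $v^2$ on a single argument should cancel, as should the terms $[[vx,vy],z]$ and $[[x,vy],vz]$. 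This should leave exactly $2[[vx,y],vz]$.

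The only real obstacle is bookkeeping in (2): nine Leibniz terms plus three corrections must be tracked. The care required is sign hygiene: since $v$ is even, no Koszul signs arise, regardless of the parities of $x,y,z$. I would organize the computation as a table indexed by where the two $v$'s land (on $x$ twice, $y$ twice, $z$ twice, or split as $xy$, $xz$, $yz$) and check that each coefficient vanishes except the $xz$ entry, which should equal $2$.
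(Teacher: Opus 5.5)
Your proposal is correct and is essentially the paper's argument: both identities are universal consequences of the Leibniz rule for the even elements $v$ and $w$, and your expansions (the cancellation of all $\operatorname{ad}(w)$ terms, the cancellation of the single-argument $\operatorname{ad}(v)^2$ terms, and the surviving coefficient $2$ on $[[[v,x],y],[v,z]]$) check out term by term. The paper packages the same bookkeeping as polynomial identities in commuting operators. It uses $T_1,T_2$ on $\mathcal{A}X_i\otimes\mathcal{A}X_j$ for (1), and $T_1,T_2,T_3$ on a triple tensor product for (2), via Lemma~\ref{lem:4.1}. This is exactly your table indexed by where the copies of $v$ land.
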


\begin{proof}
(1) Consider the tensor product \(\mathcal{A}X_i \otimes  \mathcal{A}X_j \) and the commuting operators
\[
T_1: x \otimes y \mapsto [v, x] \otimes y, \quad T_2: x \otimes y \mapsto x \otimes [v, y].
\]
Let \( \gamma = i/j \). Assertion (1)  follows from Lemma  {\ref{lem:4.1}(1)}, and the identity {\color{.} {\[[v, [v, [x, y]] - {2}[x, [v, y]]]=[\operatorname{ad}(v)^{2}x,y]-[x, \operatorname{ad}(v)^{2}y].\]}}

(2) Consider \( \mathcal{A}X_i \otimes \mathcal{A}X_{-i} \otimes \mathcal{A}X_i \), and the commuting operators
\[
T_1: x \otimes y \otimes z \mapsto [v, x] \otimes y \otimes z, \ \
T_2: x \otimes y \otimes z \mapsto x \otimes [v, y] \otimes z, \ \ 
T_3: x \otimes y \otimes z \mapsto x \otimes y \otimes [v, z].
\]
Similar to (1), assertion (2)  follows from Lemma {\ref{lem:4.1}(2)}. 

This completes the proof of the lemma.
\end{proof}


\begin{lemma}\label{lem:4.3}
 For any \( -m \leqslant i, j \leqslant m \), such that \( i \neq 0, j \neq 0, i+j \neq 0 \), we have in $\widetilde{L}$ the inclusion:
\[
[\mathcal{A} X_i, \mathcal{A} X_j] \subseteq \mathcal{A} X_{i+j}.
\]
If \( i+j \notin [-m, m] \) then the right hand side is \( (0) \).
\end{lemma}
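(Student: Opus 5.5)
We prove the inclusion by induction on degree in the algebra \( \mathcal{A} \). Every element of \( \mathcal{A}X_i \) is a linear combination of elements \( \operatorname{ad}(v_1)\cdots\operatorname{ad}(v_p)x \), \( x\in X_i \), \( v_k\in V \). We show by induction on \( N=p+q \) that
\[
[\operatorname{ad}(V)^p X_i, \operatorname{ad}(V)^q X_j] \subseteq \mathcal{A}X_{i+j},
\]
simultaneously for all admissible pairs \( (i,j) \). By polarization (char \(F\neq 2\)), products \( \operatorname{ad}(v_1)\operatorname{ad}(v_2) \) with \( v_1\neq v_2 \) are combinations of squares \( \operatorname{ad}(v)^2 \), together with commutator corrections \( [\operatorname{ad}(v_1),\operatorname{ad}(v_2)]=\operatorname{ad}([v_1,v_2]) \). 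These corrections are handled by (R1) and (R6), since \( [V,V]\subseteq[\widetilde V,\widetilde V] \) acts as zero on generators and hence, by the Jacobi identity, on everything they generate. So it suffices to treat expressions in which the operators applied are powers \( \operatorname{ad}(v)^2 \) and single \( \operatorname{ad}(v) \).

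The base cases \( p+q\leqslant 1 \) are exactly relation (R3) with \( k=l=0 \). For the inductive step, write the higher-degree element as \( \operatorname{ad}(v)^2 x \) with \( x\in\operatorname{ad}(V)^{p-2}X_i \), or move a single \( \operatorname{ad}(v) \) across the bracket using \( [[v,x],y]=[v,[x,y]]-[x,[v,y]] \). The first term lies in \( \mathcal{A}X_{i+j} \) by induction, since \( \mathcal{A}X_{i+j} \) is \( \operatorname{ad}(V) \)-stable. This reduces everything to the case of \( \operatorname{ad}(v)^2 \) falling on one side, say \( [\operatorname{ad}(v)^2x,y] \), and this is where Lemma \ref{lem:4.2}(1) is applied. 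Its right-hand side contains four kinds of terms:
\begin{itemize}
\item \( [v,[v,[x,y]]-2[x,[v,y]]] \), which has lower total degree inside and is then acted on by \( \operatorname{ad}(V) \), so it is in \( \mathcal{A}X_{i+j} \) by induction;
\item \( [w,[x,y]] \) with \( w=f(v,v)\in\widetilde V \), which is in \( \mathcal{A}X_{i+j} \) by induction plus (R6), extended from \( X \) to \( \mathcal{A}X \) using (R1) and \( [\widetilde V,V]=0 \);
\item the two terms involving \( s=\operatorname{ad}(v)^2-i\operatorname{ad}(w)\in S_i \) applied to \( x \) or \( y \).
\end{itemize}

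The \( S_i \)-terms are the main obstacle. In \( \widetilde L \), a priori, \( S_i \) does not preserve \( \mathcal{A}X_i \) in degree-reducing form. The plan is to show first that \( S_i\mathcal{A}X_i\subseteq\mathcal{A}(S_iX_i) \), since \( S_i \) commutes with \( \mathcal{A} \) modulo (R1), and that \( S_iX_i\subseteq\mathcal{A}X_i \) by (R7)/(R5), with \( S_i^2X_i=0 \) by (R2). An element \( s x \), with \( x=a x_0 \), is then \( a(s x_0) \). Bracketing it is handled by a second induction in which the generator \( s x_0 \) plays the role of \( X_i \). For this, (R3) with \( k,l\leqslant 1 \) supplies the base case, and \( S_i^2=0 \) prevents any further growth. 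So the induction should run on the pair (\( \mathcal{A} \)-degree, \( S \)-degree), with \( S \)-degree at most \( 1 \) on each side by (R2).

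I expect the bookkeeping here — keeping the \( \mathcal{A} \)-degree strictly decreasing while the \( S \)-degree is bounded by \( 1 \), and handling non-commutativity of \( \operatorname{ad}(V) \) in \( \widetilde L \) via (R1) — to be the delicate part. Finally, when \( i+j\notin[-m,m] \), the same induction shows the bracket vanishes, since (R3) sets the base to zero and every reduction produces only terms of the same form.
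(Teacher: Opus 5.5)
Your proposal is correct and is essentially the paper's proof. The paper inducts on \(p+q\) for brackets \([\operatorname{ad}(V)^pS_i^kX_i,\operatorname{ad}(V)^qS_j^lX_j]\) with \(k,l\leqslant 1\), takes (R3) as the base case, moves every \(\operatorname{ad}(V)\) to one side by Jacobi, polarizes to get \(\operatorname{ad}(v)^2\), and applies Lemma~\ref{lem:4.2}(1), whose \(S\)-terms lower the degree by \(2\) and vanish by (R2) once \(k\) or \(l\) reaches \(2\); your explicit use of (R1) (commutativity of \(\operatorname{ad}(\widetilde V)\) on \(\widetilde L\)) and (R6) (the \([w,\cdot]\)-term) fills in steps the paper leaves implicit, and the only slip is that \(S_iX_i\subseteq\mathcal{A}X_i\) follows from (R6), not (R7)/(R5), though your induction never needs it.
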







\begin{proof}

We will use induction on \(p + q\). If \(p + q \leqslant 1\), then the inclusion follows from (R3).

Let \(p + q \geqslant 1\). By the Jacobi identity and the induction assumption we can assume that \(q = 0\) and therefore \(p \geqslant 1\). So, one considers
\[
[\operatorname{ad}(v_1)\operatorname{ad}(v_2)\cdots \operatorname{ad}(v_p)S_i^k X_i, S_j^k X_j].
\]
Since the elements \(v_1, v_2\) commute, while \(0 \neq 2\) in \(F\), without loss of generality we can assume that \(v_1 = v_2 = v\).

Let \(x, y\) be elements from \(\operatorname{ad}(v_3) \cdots \operatorname{ad}(v_p) S_i^k X_i\) and \(S_j^l X_j\) respectively, \(x \in \mathcal{A} X_i\), \(y \in \mathcal{A} X_j\).

Let us apply Lemma \ref{lem:4.2} (1) to \([[v, [v, x]],y]\) and consider all summands on the right hand side of Lemma \ref{lem:4.2} (1) separately.

The element \([v,[ x, y]] - 2 [x, [v, y]]\) lies in \(\mathcal{A} X_{i+j}\) since \(p + q\) went down by 1. The element \[[(\operatorname{ad}(v)^2 - i\operatorname{ad}(w)) x, y] \in [\operatorname{ad}(v_3) \cdots \operatorname{ad}(v_p) S_i^{k+1} X_i; S_j^l X_j]\] lies in \(\mathcal{A} X_{i+j}\) since \(p + q\) went down by 2. Similarly, \[[x, (\operatorname{ad}(v)^2 - j\operatorname{ad}(w))y] \in [\operatorname{ad}(v_3) \cdots \operatorname{ad}(v_p) S_i^k X_i; S_j^{l+1} X_j] \subseteq \mathcal{A} X_{i+j}\] since \(p + q\) went down by 2. 

Finally, \([x, y] \in \mathcal{A} X_{i+j}\) since \(p + q\) went down by 2 and, therefore, \[[w, [x, y]] \in \mathcal{A} X_{i+j}. \qedhere \]
\end{proof}


\begin{lemma}\label{lem: 4.4}
  For arbitrary indices \( -m \leqslant i,j \leqslant m \), \( i \neq 0, j \neq 0 \), we have in $\widetilde{L}$ the inclusion:
\[
[[\mathcal{A} X_i, \mathcal{A} X_{-i}], \mathcal{A} X_j] \subseteq \mathcal{A} X_j.
\]
\end{lemma}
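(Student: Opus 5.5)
We prove the inclusion by induction on the total number of operators $\operatorname{ad}(v)$, $v\in V$, applied to generators, mirroring the proof of Lemma \ref{lem:4.3}. Every element of $\mathcal{A}X_i$ is a sum of elements $\operatorname{ad}(v_1)\cdots\operatorname{ad}(v_p)x$ with $x\in X_i$. Write $S_i^kX_i$ with $k\in\{0,1\}$ for the generating pieces; this uses (R2) ($S_i^2X_i=0$). Consider
\[
[[\operatorname{ad}(V)^pS_i^kX_i,\operatorname{ad}(V)^qS_{-i}^lX_{-i}],\operatorname{ad}(V)^rS_j^tX_j].
\]
We induct on $p+q+r$.

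\textbf{Case $j\neq \pm i$.} Expand $[[a,b],c]=[a,[b,c]]\pm[b,[a,c]]$. Lemma \ref{lem:4.3} gives $[b,c]\in\mathcal{A}X_{j-i}$ (or $0$ if $j-i\notin[-m,m]$). Applying Lemma \ref{lem:4.3} again gives $[a,[b,c]]\in\mathcal{A}X_j$, provided $j-i\neq 0$ and $i+(j-i)=j\neq 0$. The second term is handled the same way. So only the case $j=\pm i$ needs work, and by the symmetry $i\leftrightarrow -i$ we may assume $j=i$.

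\textbf{Case $j=i$.} The base case $p+q+r\le 4$ is exactly (R4). For the inductive step, first use the Jacobi identity to move a single $\operatorname{ad}(v)$ from one slot to another. The error terms are $\operatorname{ad}(v)$ applied to the whole triple bracket, and these stay in $\mathcal{A}X_i$ because $\mathcal{A}X_i$ is $\operatorname{ad}(V)$-stable. After this move and polarization (valid since $2\neq 0$ and the $v$'s may be taken equal), we reduce to two configurations: two copies of $\operatorname{ad}(v)$ in the same slot, or one $\operatorname{ad}(v)$ in each of the first and third slots.

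In the first configuration we rewrite $\operatorname{ad}(v)^2=(\operatorname{ad}(v)^2-i\operatorname{ad}(w))+i\operatorname{ad}(w)$. The first summand raises $k$ to $k+1$ while lowering $p+q+r$ by $2$; if $k+1=2$ the term vanishes by (R2). The second summand, $\operatorname{ad}(w)$ with $w\in\widetilde V$, is moved outside the bracket by the Jacobi identity and (R6), again at the cost of two $V$'s. In the second configuration we apply Lemma \ref{lem:4.2}(2), which writes $2[[[v,x],y],[v,z]]$ as $\operatorname{ad}(v)$ of triple brackets with fewer $V$'s, plus terms involving $S$-operators, plus $[w,\cdot]$ terms; all of these are covered by induction.

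The one subtlety is the identity $[\operatorname{ad}(V)^2x,y]\subseteq[\operatorname{ad}(V)x,\operatorname{ad}(V)y]+[x,\operatorname{ad}(V)^2y]$ for $x\in X_i,y\in X_{-i}$. It lets us shift $V$'s across the middle bracket, and it holds in $\widetilde L$ by (R8).

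\textbf{Main obstacle.} The delicate point is that $V$ and $\widetilde V$ are not known to be abelian in $\widetilde L$, and that $[w,\cdot]$ must preserve $\mathcal{A}X_i$ for $w\in\widetilde V$ that are not in $V$. To handle this, I would first establish that $[\widetilde V,\mathcal{A}X_i]\subseteq\mathcal{A}X_i$. The plan is to combine (R6) with (R1): (R1) makes $\operatorname{ad}(\widetilde V)$ commute with $\operatorname{ad}(V)$ on the generators, and since $V\subseteq\widetilde V$ this propagates to all of $\mathcal{A}X_i$ by induction on the $\mathcal{A}$-degree. This gives the needed stability, after which the induction described above goes through.
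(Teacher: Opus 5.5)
There is a genuine gap in the inductive step for $j=i$. Your handling of $j\neq\pm i$ via Lemma \ref{lem:4.3} is fine, as is the base case (R4). Your remark about (R1) is also correct: it makes $[\widetilde V,\widetilde V]$ central in $\widetilde L$, so all $\operatorname{ad}(\widetilde V)$ commute and $[\widetilde V,\mathcal{A}X_i]\subseteq\mathcal{A}X_i$ by (R6).

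\textbf{Where it fails.} The problem is your ``first configuration''. You cannot move $\operatorname{ad}(w)$ outside the bracket ``at the cost of two $V$'s'': by the derivation rule it reappears on the other two slots. Since $\operatorname{ad}(v)^2+i\operatorname{ad}(w)\in S_{-i}$ and $\operatorname{ad}(v)^2-i\operatorname{ad}(w)\in S_i$, your manipulation actually yields only
\[
[[\operatorname{ad}(v)^2x,y],z]-[[x,\operatorname{ad}(v)^2y],z]+[[x,y],\operatorname{ad}(v)^2z]\equiv 0 \quad(\text{modulo terms of smaller count}).
\]
In the notation of Lemma \ref{lem:4.1} this is the relation $T_1^2-T_2^2+T_3^2\equiv 0$, not $T_1^2\equiv 0$. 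Expanding $[w,y]$ by (R6) instead gives an element of $\mathcal{A}X_{-i}$ of uncontrolled $V$-degree, so the count does not drop that way either.

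Combining this with the outer-$\operatorname{ad}(v)$ moves ($T_1+T_2+T_3\equiv 0$) can never be enough. Modulo $T_1+T_2+T_3$ one has $T_1^2-T_2^2+T_3^2=-2T_1T_3$, so the quotient is $F[T_1,T_3]/(T_1T_3)$. The monomials $T_1^d$ and $T_3^d$ survive; these are the configurations with all $V$'s in one outer slot, e.g.\ $(p,q,r)=(5,0,0)$.

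There is a second problem. Lemma \ref{lem:4.2}(2) needs the \emph{same} $v$ outermost in the first and third slots. Polarizing across two different slots only produces the symmetric sum $[[\operatorname{ad}(a)x,y],\operatorname{ad}(c)z]+[[\operatorname{ad}(c)x,y],\operatorname{ad}(a)z]$, not the individual term.

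\textbf{What the paper does instead (cases 2--4).} When the first slot carries at least two $V$'s and the third at least one, it polarizes \emph{inside} the first slot to $\operatorname{ad}(a_1)^2$. It then uses the four-term identity with $\operatorname{ad}(a_1+c_1)$ to reduce $[[\operatorname{ad}(a_1)^2x,y],\operatorname{ad}(c_1)z]$ to instances of Lemma \ref{lem:4.2}(2).

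When an outer slot carries no $V$'s, the paper proceeds in three steps:
\begin{enumerate}
\item It uses the Leibniz rule to leave exactly two $V$'s in one slot.
\item It swaps the outer slots via $[[a,b],c]\equiv\pm[[c,b],a]\pmod{\mathcal{A}X_i}$. This holds because $[[a,c],b]\in[\mathcal{A}X_{2i},\mathcal{A}X_{-i}]\subseteq\mathcal{A}X_i$ by Lemma \ref{lem:4.3}.
\item It applies (R8) to $[x,\operatorname{ad}(V)^2y]$ with $x$ carrying no $V$'s. This pushes a $V$ into the first slot and lands in the previous case.
\end{enumerate}

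You cite (R8) but never actually use it, and the slot swap is absent. Without these two ingredients your induction does not close.
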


\begin{proof}
If \( j \neq \pm i \), then the assertion follows from Lemma \ref{lem:4.3}. 

Let \( j = i \). We will use induction on \[p+q+r\]
to show that
\[
[[\operatorname{ad}(V)^p S_i^k X_i, \operatorname{ad}(V)^q S_{-i}^l X_{-i}], \operatorname{ad}(V)^r S_i^t X_i] \subseteq \mathcal{A} X_i.
\]

If \( k + l + t \geqslant 4 \), then the expression is equal to zero.

Note that if  \( p+q+r \leqslant 4 \), then the inclusion follows from (R4). Thus, suppose that \( p+q+r \geqslant 5 \). Let \( x, z \in X_i, y \in X_{-i} \). Consider an element \(\omega\) in 
\[
 [[\operatorname{ad}(a_1)\cdots \operatorname{ad}(a_p) S_i^k x, \operatorname{ad}(b_1)\cdots \operatorname{ad}(b_q) S_{-i}^l y], \operatorname{ad}(c_1)\cdots \operatorname{ad}(c_r) S_i^t z],
\]
where \( a_{\mu}, b_{\nu}, c_{\kappa} \in V \).

\begin{enumerate}
\item[1)] Suppose that \( a_1 = c_1 \). Then the assertion follows from Lemma {\ref{lem:4.2}}(2) and the induction assumption.

\item[2)] Suppose that \( p \geqslant 2, r \geqslant 1 \). Since the elements \( a_1, a_2 \) commute, we assume that \( a_1 = a_2 \). Then
\begin{align*}
\omega &= [[\operatorname{ad}(a_1)\operatorname{ad}(a_1)\cdots x, \cdots y], \operatorname{ad}(c_1)\cdots z]\\ 
&= [[\operatorname{ad}(a_1 + c_1)\operatorname{ad}(a_1)\cdots x, \cdots y], \operatorname{ad}(a_1 + c_1)\cdots z]\\
& \quad - [[\operatorname{ad}(a_1)\operatorname{ad}(a_1)\cdots x, \cdots y], \operatorname{ad}(a_1)\cdots z]\\
& \quad - [[\operatorname{ad}(c_1)\operatorname{ad}(a_1)\cdots x, \cdots y], \operatorname{ad}(c_1)\cdots z]\\
& \quad - [[\operatorname{ad}(c_1)\operatorname{ad}(a_1)\cdots x, \cdots y], \operatorname{ad}(a_1)\cdots z].
\end{align*}
 

All the summands on the right hand side belong to the previously considered case 1.

\item[3)] Suppose that \( q \geqslant 2 \).
\textcolor{.}{For $\alpha, \beta, \gamma \geqslant 0$, \(
\alpha + \beta + \gamma = q - 2, 
\) we have the following inclusion: }\[
[[\operatorname{ad}(V)^p x, \operatorname{ad}(V)^q y], \operatorname{ad}(V)^r z] \subseteq 
\sum\limits_{\textcolor{.}{\substack{\alpha+\beta+\gamma\\=q-2}}} \operatorname{ad}(V)^\alpha [[\operatorname{ad}(V)^{p+\beta} x, \operatorname{ad}(V)^2 y], \operatorname{ad}(V)^{r+\gamma} z].
\]

If \( \alpha \geqslant 1 \), then the summands are covered by the induction assumption. 

\textcolor{.}{Therefore,} let \( \alpha = 0 \). Since \( p + q + r \geqslant 5 \) it follows that \( (p+\beta) + (r+\gamma) \geqslant 3 \).


Now,
\[
[[\operatorname{ad}(V)^p x, \operatorname{ad}(V)^q y], \operatorname{ad}(V)^r z] =
\]
\[
[\operatorname{ad}(V)^p x, [\operatorname{ad}(V)^q y, \operatorname{ad}(V)^r z]] \mod \mathcal{A} X_i
\]
Indeed, \( [\mathcal{A} X_i, \mathcal{A} X_i] \subseteq \mathcal{A} X_{2i} \) and \( [\mathcal{A} X_{2i}, \mathcal{A} X_{-i}] \subseteq \mathcal{A} X_{i} \) by Lemma \ref{lem:4.3}.

If both \( p+\beta \) and \( r+\gamma \) are nonzero, then without loss of generality we assume \(p+\beta \geqslant 2, r+\gamma \geqslant 1.\) Hence the summand is covered by case (2).

Let \( p+\beta = 0, r+\gamma \geqslant 3 \); the case \( p+\beta \geqslant 3, r+\gamma = 0 \) is similar. By (R8) we have
\[
[x, \operatorname{ad}(V)^2 y] \subseteq \sum\limits_{\color{.} s \geqslant 1} [\operatorname{ad}(V)^{\color{.}s} x, \operatorname{ad}(V)^{\color{.}2-s} y],
\]

The summand \( [[\operatorname{ad}(V)^{\color{.}s} x, \operatorname{ad}(V)^{\color{.}2-s} y], \operatorname{ad}(V)^{r+\gamma} z] \) with \( \color{.} s =1,2 \) is covered by case 2). This completes the proof in the case \( q \geqslant 2 \).

\item[4)] Now let \( q \leqslant 1 \) and therefore \( p + r \geqslant 4 \). If both \( p \) and \( r \) are nonzero, then the summand is covered by case 2).

Let \( r = 0, p \geqslant 4 \). Then
\[
[[\operatorname{ad}(V)^p x, \operatorname{ad}(V)^q y],  z] \subseteq
\]
\[
\sum\limits_{\textcolor{.}{\substack{\alpha+\beta+\gamma\\=p-2}}} \operatorname{ad}(V)^\alpha [[\operatorname{ad}(V)^2 x, \operatorname{ad}(V)^{q+\beta} y], \operatorname{ad}(V)^{\gamma} z].
\]

If \( \alpha \geqslant 1 \) then the induction assumption applies.

Let \( \alpha = 0 \). If \( \gamma \geqslant 1 \) then the summand is covered by case 2). Let \( \gamma = 0 \), then 
\[
\beta \geqslant 2. 
\]
 \text{Hence} \(q + \beta \geqslant 2\), which is a case considered in 3). This completes the proof of the lemma. \qedhere
\end{enumerate}
\end{proof}

We are now in place to prove Proposition \ref{prop: 4.1}:


\begin{proposition*}
The universal central extension \( \widehat{L} \) is finitely presented.
\end{proposition*}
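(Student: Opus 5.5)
The plan is to show that the finitely presented superalgebra \(\widetilde{L}\) defined by generators \(X_i, Y_i\) (\(i\neq 0\)) and relations (R1)--(R8) is a central extension of \(L\) that is perfect. Then \(\widehat{L}\), being universal, maps onto \(\widetilde{L}\). Conversely, \(\widetilde{L}\) is a quotient of \(\widehat{L}=\langle X\mid R^*=(0)\rangle\): all relations (R1)--(R8) hold in \(L\), and each of them is homogeneous of nonzero degree with respect to \(h\), except possibly the degree-zero parts of (R1) and (R8). Those degree-zero pieces must also be checked to hold in \(\widehat{L}\). Granting this, we get mutually inverse surjections compatible with the projections to \(L\), and by universality \(\widehat{L}\cong\widetilde{L}\). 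The key structural claim to establish in \(\widetilde{L}\) is
\[
\widetilde{L}=\sum_{i\neq 0}\mathcal{A}X_i+\sum_{i=1}^m[\mathcal{A}X_{-i},\mathcal{A}X_i],
\]
together with the fact that the grading by \(\operatorname{ad}(h)\) passes to \(\widetilde{L}\) so that \(\widetilde{L}_i=\mathcal{A}X_i\) for \(i\neq 0\).

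First I will show that the right-hand side, call it \(N\), is a subalgebra containing the generators. By (R5), \(Y_i\subseteq\mathcal{A}X_i\), so \(N\) contains the generators. Closure splits into three cases.
\begin{enumerate}
\item Products \([\mathcal{A}X_i,\mathcal{A}X_j]\) with \(i+j\neq 0\) are handled by Lemma~\ref{lem:4.3}.
\item Products \([[\mathcal{A}X_i,\mathcal{A}X_{-i}],\mathcal{A}X_j]\) are handled by Lemma~\ref{lem: 4.4}.
\item Products of two degree-zero elements \([[a,b],[c,d]]\) are expanded by the Jacobi identity into \([[[a,b],c],d]\pm[c,[[a,b],d]]\), which lands in \(N\) by the previous two cases.
\end{enumerate}
One must also check that \(\mathcal{A}X_i\) is stable under \(\operatorname{ad}(V)\), which holds by construction, and under \(\operatorname{ad}(\widetilde{V})\). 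The latter follows from (R6) combined with (R1): (R1) gives commutation of \(\operatorname{ad}\widetilde V\) with \(\operatorname{ad}V\) on the generators, and this propagates because \(V\subseteq\widetilde V\) are themselves built from brackets of generators. Hence \(N=\widetilde{L}\).

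Next I will show that the kernel of \(\widetilde{L}\to L\) is central. On \(\mathcal{A}X_i\) with \(i\neq0\), the module \(L_i\) is presented by (R7), so \(\mathcal{A}X_i\subseteq\widetilde{L}\) maps isomorphically onto \(L_i\); \(\mathcal{A}\) acts on \(\widetilde{L}\) through the same relations. Thus the kernel lies in the degree-zero part \(\sum[\mathcal{A}X_{-i},\mathcal{A}X_i]\). Let \(z\) be an element of the kernel in this degree-zero part. For \(y\in\mathcal{A}X_j\), the element \([z,y]\) lies in \(\mathcal{A}X_j\) by Lemma~\ref{lem: 4.4}, and its image in \(L\) is zero. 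By injectivity on \(\mathcal{A}X_j\), we get \([z,y]=0\). Since the \(\mathcal{A}X_j\) generate \(\widetilde{L}\), \(z\) is central. Perfectness of \(\widetilde L\) follows as in the universal central extension lemma: \(h\) lies in \(\widetilde V\subseteq\sum[Y_{-i},Y_i]\), and \(\operatorname{ad}h\) acts on each \(\mathcal{A}X_i\) with the nonzero eigenvalue \(i\), so every generator is a commutator.

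The main obstacle is the bookkeeping identifying \(\widetilde{L}\) with \(\widehat{L}\). Specifically, the relations (R1), (R8), and the inclusions in (R4) whose right side has degree \(i\) but which involve degree-zero subexpressions must be shown to follow from the relations of nonzero degree \(R^*\). Otherwise we only know that \(\widetilde{L}\) is a central extension that is a quotient of \(L(X)/\operatorname{id}(R^*)\). To handle this, I would argue as in the lemma on universal central extensions. Each such relation, say a degree-zero element \(r\in R_0\), only enters after bracketing with a generator, and \([R_0,x_i]\subseteq R^*\) by (20). So I would replace (R1) and (R8) by their bracketed forms \([r,X_j]=0\), which are relations of nonzero degree and suffice for all uses in Lemmas~\ref{lem:4.3}--\ref{lem: 4.4}. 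This reformulation still presents a perfect central extension of \(L\) that is a quotient of \(\widehat{L}\), and hence it equals \(\widehat L\). This makes \(\widehat L\) finitely presented.
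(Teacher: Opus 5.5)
Your argument is correct in substance and follows the paper's skeleton. Lemmas 4.3 and 4.4 give \(\widetilde{L}=\sum_{i\neq 0}\mathcal{A}X_i+\sum_i[\mathcal{A}X_{-i},\mathcal{A}X_i]\). (R7) makes \(\mathcal{A}X_i\to L_i\) injective, so whatever dies in \(L\) sits in degree \(0\) and is central.

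The difference is in how \(\widetilde{L}\) is identified with \(\widehat{L}\). The paper asserts that the identity on generators extends to an epimorphism \(\varphi:\widetilde{L}\to\widehat{L}\). It then concludes from the centrality of \(\ker\varphi\), implicitly using that \(\widehat{L}\) is centrally closed. You instead prove directly that \(\widetilde{L}\to L\) is a perfect central extension, take \(\widehat{L}\to\widetilde{L}\) from universality, and get the reverse surjection because the defining relations lie in \(R^*\).

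This exposes a point the paper passes over. (R8) is the only defining relation of \(\operatorname{ad}(h)\)-degree \(0\), and it need not hold in \(\widehat{L}\). The concern is real. Take \(L=K(1:3)\) of Ramond type, \(h=\zeta_1\eta_1\), \(t^{\pm1}\in U_{\bar 0}\), \(\zeta_1\in X_1\), \(\eta_1\in X_{-1}\). Then \([t^{-1}h,1]=-t^{-1}h\) and \([th,t^{-1}h]=0\) in \(L\). However, in the \(N=3\) algebra \(\widehat{L}\) the (R8) element \([th,[t^{-1}h,[\zeta_1,\eta_1]]]\) is a nonzero multiple of the central charge (the level term of the \(\zeta_1\eta_1\)-current). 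So the paper's \(\widetilde{L}\) would be a proper quotient of \(\widehat{L}\).

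Your replacement of (R8) by the brackets of its elements with the generators costs nothing. Lemma 4.4 (case 3) only uses (R8) after bracketing with an element of \(\mathcal{A}X_i\), and these bracketed relations have nonzero degree, so they lie in \(R^*\).

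Three small fixes are needed.
\begin{enumerate}
\item Only (R8) has degree \(0\). (R1) is already bracketed with \(X_{\pm i},Y_{\pm i}\), and (R4) has total degree \(i\), so both lie in \(R^*\) automatically.
\item Bracket the (R8) elements with all generators \(X_j\cup Y_j\), not only with \(X_j\), so that they become central in the new algebra.
\item The directions are swapped in two places. Relations lying in \(R^*\) make \(\widehat{L}\) a quotient of \(\widetilde{L}\), not the reverse. In your last sentence, being a quotient of \(\widehat{L}\) is automatic for any perfect central extension and does not give equality. Equality comes from the surjection \(\widetilde{L}\twoheadrightarrow\widehat{L}\) combined with the universal map \(\widehat{L}\to\widetilde{L}\).
\end{enumerate}
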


\begin{proof}
Lemmas \ref{lem:4.3} and \ref{lem: 4.4} imply that
\[
\widetilde{L} = \sum_{i \neq 0} \mathcal{A} X_i \oplus \sum_{i \neq 0} [\mathcal{A} X_i, \mathcal{A} X_{-i}].
\]

The identical mapping \( X_i \rightarrow X_i \) extends to an epimorphism \( \varphi: \widetilde{L} \rightarrow \widehat{L} \), where \( \widehat{L} \) is the universal central extension of \( L \). The relations (R7) imply that \( \ker \varphi \subseteq \sum\limits_{i \neq 0} [\mathcal{A} X_{-i}, \mathcal{A} X_i] \). Hence, \( \ker \varphi \) lies in the center of \( \widetilde{L} \). Therefore we conclude that \( \widetilde{L} \cong \widehat{L} \). This completes the proof of the proposition.
\end{proof}


\section{Superconformal Algebras}

In this chapter we will check that the assumptions of Chapter  \ref{sec:CondFinPresUCE} are satisfied for superconformal algebras considered above. It will imply   Theorem \ref{theorem:B}. 

\textcolor{.}{In what follows, let \( U \) be a \textit{finite dimensional} graded generating subspace of \( A \), with \( 1 \in U_{\bar{0}} \). \footnote{{\textcolor{.}{We remind the reader that if $A$ is a finitely generated associative commutative superalgebra, then $A_{\bar{1}}$ is a finitely generated $A_{\bar{0}}$-module.}}}}

\subsection{Superalgebras \( W(A:n) \), \( n \geqslant 1 \) and \( S(\delta,n) \), \( n \geqslant 2 \)}\label{subsec:5.1}
\mbox{}\\


Let \( n \geqslant 2 \), \( L = W(A:n) \), \( h = \xi_1 \frac{\partial}{\partial \xi_1} - \xi_2 \frac{\partial}{\partial \xi_2} \). The operator \( \operatorname{ad}(h) \) has eigenvalues \( -2, -1, 0, 1, 2 \), and
\[
L = L_{-2} + L_{-1} + L_0 + L_1 + L_2
\]
is a direct sum of eigenspaces. We have
\begin{align*}
L_{-2} &= \sum A \cdot \xi_{i_1} \cdots \xi_{i_r} \xi_2 \frac{\partial}{\partial \xi_1}, \\
L_{-1} &= \sum \xi_{i_1} \cdots \xi_{i_r} \left( \xi_2 \operatorname{Der} A + A \frac{\partial}{\partial \xi_1} + \sum_{k=1}^n A \xi_2 \xi_k \frac{\partial}{\partial \xi_k}  \textcolor{.}{+\sum_{j=3}^n A \xi_2 \frac{\partial}{\partial \xi_j}} \right), \\
L_1 &= \sum \xi_{i_1} \cdots \xi_{i_r} \left( \xi_1 \operatorname{Der} A + A \frac{\partial}{\partial \xi_2} + \sum_{k=1}^n A \xi_1 \xi_k \frac{\partial}{\partial \xi_k} \textcolor{.}{+\sum_{j=3}^n A \xi_1 \frac{\partial}{\partial \xi_j}} \right), \\
L_2 &= \sum A \cdot \xi_{i_1} \cdots \xi_{i_r} \xi_1 \frac{\partial}{\partial \xi_2},
\end{align*}
where \( 3 \leqslant i_1 < \cdots < i_r \leqslant n,\)  
 \textcolor{.}{ \(\xi_{\emptyset} = 1\) when  \(r=0\)}.


Let \( z = \xi_{i_1} \cdots \xi_{i_r} \), \( 3 \leqslant i_1 < \ldots < i_r \leqslant n \), \( d \in \operatorname{Der} A \), \( a \in A_{\bar{0}} \). We have
\begin{align*}
[a h, z \xi_2 d] &= -a z \xi_2 d - z \xi_2 d(a) \xi_1 \frac{\partial}{\partial \xi_1}, \tag{22} \\
[a h, z \xi_1 d] &= a z \xi_1 d + z \xi_1 d(a) \xi_2 \frac{\partial}{\partial \xi_2}. \tag{23}
\end{align*}

For all elements \( u \in L_i \), \( i \neq 0 \), not involving summands from
\[
\xi_{i_1} \cdots \xi_{i_r} \xi_2 \operatorname{Der} A, \quad \xi_{i_1} \cdots \xi_{i_r} \xi_1 \operatorname{Der} A,
\]
we have
\[
[a h, u] = i a u. \tag{24}
\]

Let \( V = U_{\bar{0}} h \), \( \widetilde{V} = (U_{\bar{0}})^2 h \). For elements \( v' = u' h \), \( v'' = u'' h \); \( u', u'' \in U_{\bar{0}} \), define \( f(v', v'') = u' u'' h \).

The equalities (22), (23), (24) imply
\[
S_i^2 L_i = (0), \quad i = \pm 1, \pm 2.
\]

The same formulas imply that each eigenspace \( L_i \), \( i \neq 0 \), is a finitely generated \( \mathcal{A} \)-module.

Let us check the condition (21). We have
\[
L_0 = \sum \xi_{i_1} \cdots \xi_{i_r} \left( \operatorname{Der} A + \sum_{k=1}^n A \xi_k \frac{\partial}{\partial \xi_k} \right),
\]
where in each summand the set \( \{i_1, \ldots, i_r\} \) either does not include 1 and 2 or includes both 1 and 2.

Denote \( L_0' = \sum \xi_{i_1} \cdots \xi_{i_r} \left( \sum\limits_{k=1}^n A \xi_k \frac{\partial}{\partial \xi_k} \right) \), we have
\[
[A_{\bar{0}} (\xi_1 \frac{\partial}{\partial \xi_1}- \xi_2 \frac{\partial}{\partial \xi_2}), L_0] \subseteq L_0',
\]
\[
[A_{\bar{0}} (\xi_1 \frac{\partial}{\partial \xi_1}- \xi_2 \frac{\partial}{\partial \xi_2}), L_0'] = (0).
\]
This implies (21), so we showed that the superalgebra \( W(A:n) \), \( n \geqslant 2 \) satisfies all the assumptions of Chapter {\ref{sec:CondFinPresUCE}}.


Now let \( \delta : \operatorname{Der} A \to A \) be a divergence mapping, let \( L = S(\delta, n) \), \( n \geqslant 2 \).

We notice that
\[
h = \xi_1 \frac{\partial}{\partial \xi_1} - \xi_2 \frac{\partial}{\partial \xi_2} \in S(\delta, n) \subset W(A:n),
\]
\[
V \subseteq \widetilde{V} \subseteq S(\delta, n).
\]

The decomposition \( L = L_{-2} + L_{-1} + L_0 + L_1 + L_2 \) of the superalgebra \( L \) into eigenspaces is the restriction of the decomposition of \( W(A:n) \). It implies that all \( L_i \), \( i \neq 0 \), are finitely generated \( \mathcal{A} \)-modules and \( S_i^2 L_i = (0) \), \( i \neq 0 \).

The assumption (21) also follows, since it holds for the algebra \( W(A:n) \).

\subsection{The Case \( W(A:1) \) with \( A = A^{\mathcal{D}} \)} \mbox{}\\ 

Let \( L = W(A:1) \) and assume that \( A = A^{\mathcal{D}} \). We have only one Grassmann variable \( \xi \).

Let \( h = \xi \frac{\partial}{\partial \xi} \), \( L = L_{-1} + L_0 + L_1 \), \( L_{-1} = A \frac{\partial}{\partial \xi} \), \( L_1 = \xi \operatorname{Der} A \).

For arbitrary elements \( a \in A_{\bar{0}} \), \( x \in A\partial/\partial{\xi} \)  , \( d \in \operatorname{Der} A \)  we have
\begin{align*}
[a \xi \frac{\partial}{\partial \xi}, x  ] &= -a{x}, \\
[a \xi \frac{\partial}{\partial \xi}, \xi d] &= a \xi d.
\end{align*}

\textcolor{.}{Define 
 \( V , \widetilde{V}, \mbox{ and } f: V \times V \to \widetilde{V} \) as in the previous case. It follows that}  \( L_i \), \( i = \pm 1 \) are finitely generated \( \mathcal{A} \)-modules and \( S_i L_i = (0) \), \( i = \pm 1 \).

We have \( L_0 = \operatorname{Der} A + A \xi \frac{\partial}{\partial \xi} \). Furthermore,
\[
[A_{\bar{0}} \xi \frac{\partial}{\partial \xi}, L_0] \subseteq A \xi \frac{\partial}{\partial \xi}, \quad [A_{\bar{0}} \xi \frac{\partial}{\partial \xi}, A \xi \frac{\partial}{\partial \xi}] = (0).
\]

This implies the assumption (21).

\subsection{Cheng-Kac Superalgebra \( CK(A,d) \)} \mbox{}\\

Let \( W = \sum_{i \geqslant 0} A d^i \) be the Weyl algebra. Let \( P = A + A d \). From the construction in Section {\ref{subsec:CK construction}} it follows that \( CK(A,d) \subseteq M_8(W) \) and, moreover, \( CK(A,d) \subseteq M_8(P) \). Let \( h = \operatorname{diag}(1, -1, 0, 0, -1, 1, 0, 0) \).

For an element \( a \in A_{\bar{0}} \) denote
\[
h(a) = \operatorname{diag}(a, -a, 0, 0, -a, a, 0, 0).
\]
Let
\[
V = h(U_{\bar{0}}), \quad \widetilde{V} = h(U_{\bar{0}}^2).
\]
For elements \( v' = h(u') \), \( v'' = h(u'') \), \( u', u'' \in U_{\bar{0}} \), we let \( f(v', v'') = h(u'u'') \).

As above, for an element \( w \in W \) and integers \( 1 \leqslant i, j \leqslant 8 \), the matrix \( e_{ij}(w) \in M_{8}(W) \) has the element \( w \) at the intersection of the \( i \)-th row and the \( j \)-th column and zeros everywhere else.

The element \( e_{ij}(w) \) belongs to one of the eigenvalues $ 0, \pm 1, \pm 2$


If \( e_{ij}(w) \) belongs to the eigenvalue \( k \neq 0 \) then
\[
S_k e_{ij}(P) \subseteq e_{ij}(A), \quad S_k e_{ij}(A) = (0).
\]
Hence,
\[
S_k^2 e_{ij}(P) = (0).
\] \textcolor{.}{From here} it is \textcolor{.}{a} straightforward \textcolor{.}{computation} that for \( k \neq 0 \), the space \( e_{ij}(P) \) is a finitely generated  \( \mathcal{A} \)-module.

The superalgebra \( L = CK(A,d) \) also decomposes as \( L = L_{-2} + L_{-1} + L_0 + L_1 + L_2 \), where
\[
L_k = L \cap M_8(P)_k.
\] This implies that each \( L_k, k\neq 0 \) is a finitely generated \( \mathcal{A} \)-module and \( S_k^2 L_k = (0) \).

It remains to check that \( [V, [V, L_0]] = (0) \); We will check that \[ [V, [V, M_8(P)_0]] = (0).\] Indeed,
\[
M_8(P)_0 \ \textcolor{.}{\subseteq} \ e_{11}(P) + e_{22}(P) + \textcolor{.}{ e_{55}(P) + e_{66}(P) +}\sum\limits_{\textcolor{.}{\substack{i,j \in \\ \{3,4,7,8\}}}} e_{ij}(P).
\]
Moreover, we have
\[
[V, M_8(P)_0] \subseteq e_{11}(A) + e_{22}(A) \textcolor{.}{+ e_{55}(A) + e_{66}(A) +0,}\] 
\[[V, e_{11}(A) + e_{22}(A) \textcolor{.}{+ e_{55}(A) + e_{66}(A)}] = (0).
\]

We have checked that the superalgebra \( CK(A,d) \) satisfies the assumptions of Chapter \ref{sec:CondFinPresUCE}.

\subsection{Superalgebras \( K(A:n) \), \( n \geqslant 2 \)}\label{subsec:5.4} \mbox{}\\

Let \( n \geqslant 3 \). By Theorem  \ref{theorem:A} the superalgebra \( L = [K(A:n), K(A:n)] \) is perfect and finitely generated. Replace the Grassmann generators \( \xi_1, \xi_2 \) by
\[
\zeta_1 = \frac{1}{\sqrt{2}} (\xi_1 + \sqrt{-1} \xi_2), \quad 
\eta_1 = \frac{1}{\sqrt{2}} (\xi_1 - \sqrt{-1} \xi_2).
\]

Let \( h = \zeta_1 \eta_1 \), \( V = U_{\bar{0}} h \), \( \widetilde{V} = U_{\bar{0}}^2 h \),  \( f(u'h , u''h) = u'u''h; u', u'' \in U_{\bar{0}} \).

Let \( x \) be a product of \( \zeta_1, \eta_1, \xi_3, \ldots, \xi_n \). Let \( a \in A_{\bar{0}}, b \in A, \) and let  \( \deg(x)\) be the degree of \(x\) in the \(\mathbb{Z}\)-grading of $G(n)$. We have
\[
\tag{25} [a \zeta_1 \eta_1, b {x}] =
\begin{cases}
abx, & \text{if } x \text{ involves } \zeta_1, \text{ but does not involve } \eta_1; \\
-abx, & \text{if } x \text{ involves } \eta_1, \text{ but does not involve } \zeta_1; \\
0, & \text{if } x \text{ involves both } \zeta_1 \text{ and } \eta_1; \\
[a, bx] \zeta_1 \eta_1 + a \zeta_1 \eta_1 d(b) x \\ \textcolor{.}{\pm  \tfrac{deg(x)}{2}{\zeta_1\eta_1}d(a)bx} \in A \zeta_1\eta_1 x, & \text{if both } \zeta_1 \text{ and } \eta_1 \text{ are not involved in } x.
\end{cases} 
\]

The Lie algebra \( L \) decomposes into the sum of eigenvalues 
\(
L =  L_{-1} + L_0 + L_1 
\) 
with respect to \( \operatorname{ad}(h).\)

From (25) it follows that \( S_i^2 L_i = 0 \) for any \( i \neq 0 \).

Let \( K(A:n)_0' = A + \sum A \xi_{i_1} \cdots \xi_{i_r} \), where \( 3 \leqslant i_1 < \cdots <i_r \leqslant n \) in each summand. Then
\[
K(A:n)_0 = K(A:n)_0' + \zeta_1\eta_1 K(A:n)_0'.
\]

From (25) it follows that
\[
[A_{\bar{0}}\zeta_1\eta_1, K(A:n)_0] \subseteq \zeta_1\eta_1 K(A:n)_0',
\]
\[
[A_{\bar{0}}\zeta_1\eta_1, \zeta_1\eta_1 K(A:n)_0'] = 0.
\]

This implies the assumption (21).

Now let \( L = K(A:2) \), we assume that \( A = M_2(A) \). Then the superalgebra \( L \) is perfect and finitely generated. The decomposition of \( L \) into a sum of eigenspaces with respect to \( \ad(\zeta_1\eta_1) \) looks as
\[
L = A \eta_1 + (A + A \zeta_1\eta_1)+A \zeta_1.
\]

The inclusions \([V, A + A \zeta_1\eta_1] \subseteq A \zeta_1\eta_1, [V, A \zeta_1\eta_1] = (0)\) imply (21). All other assumptions of Chapter {\ref{sec:CondFinPresUCE}} are straightforward.

\subsection{Twisted superalgebras \(K^{(2)} (A : n), n \geqslant 3\)} \mbox{}\\

Recall that the contact superalgebra \(A\) is equipped with a \(\mathbb{Z}/2\mathbb{Z}\)-grading \(A = A^{(0)} \oplus A^{(1)}\) that is compatible with the superalgebra grading
\[
A = A_{\bar{0}} \oplus A_{\bar{1}}.
\]

Consider the superalgebra \(K(A^{(0)}; n-1)\) generated by \(A^{(0)}, \zeta_1, \eta_1, \xi_3, \ldots, \xi_{n-1}\), 
\[
K^{(2)} (A : n) = K(A^{(0)}; n-1) + \zeta_n K(A^{(0)}; n-1).
\]

As above, we choose \( h = \zeta_1\eta_1 \). \textcolor{.}{Again} let \( U \) be a finite dimensional graded (\textcolor{.}{but} with respect to both gradings) generating subspace of the superalgebra \( A \),  
\[
U = U_{\bar{0}} \oplus U_{\bar{1}} = U^{(0)} \oplus U^{(1)}.
\]

Without loss of generality, we assume that:
\[
U_{\bar{1}} U_{\bar{1}} \subseteq U_{\bar{0}} \quad  \mbox{ and } \quad   U_{\bar{0}}^{(1)}U_{\bar{0}}^{(1)}\subseteq U_{\bar{0}}^{(0)} = U_{\bar{0}} \cap U^{(0)}.
\]

Then \( A \) becomes a finitely generated module over the subalgebra generated by \( U_{\bar{0}}^{(0)} \).  
Let
\[
V = U_{\bar{0}}^{(0)} h, \quad \widetilde{V} = (U_{\bar{0}}^{(0)})^2 h.
\]

The Lie superalgebra \( L = K^{(2)}(A : 2) \) decomposes as
\[
L = L_{-1} + L_0 + L_1, \quad L_i = L \cap K(A : 2)_i, \quad\quad \mbox{ where } i = -1, 0, 1 .
\] 

From (25) it follows that the eigenspaces \( L_{-1}, L_1 \) are finitely generated \(\mathcal{A}\) modules; all other assumptions of Chapter {\ref{sec:CondFinPresUCE} } immediately follow from the fact that they hold in \( K(A:n) \), see Section {\ref{subsec:5.4}}.

\section{Presentation of twisted superconformal algebras} 

In this chapter we separately consider the twisted superalgebra
\[
L = K^{(2)}(F[t,t^{-1}]:2) = F[t^2,t^{-2},\xi_1]  \oplus F[t^2,t^{-2},\xi_1]t\xi_2  
\]
of Ramond type.

The Lie superalgebra \(L\) is generated by elements
\[
t^2, t^{-2}, t^4, t^{-4}, \xi_1, t\xi_2, t\xi_1\xi_2,  t^{-1}\xi_1\xi_2,  , \tag{26}
\]

The elements \(t^{\pm 2}, t^{\pm 4}\) generate the centerless Virasoro algebra \(\Vir(2)\) that is known to be finitely presented (see \cite{fairlie1988}). We have

\begin{align*}
    L = \Vir(2) + \sum_{i \geqslant 0} F \, \ad(t^2)^i (t \xi_1 \xi_2) + 
\sum_{i \geqslant 1} F \, \ad(t^{-2})^i (t \xi_1 \xi_2) + \sum_{i \geqslant 0} F \, \ad(t^2)^i \xi_1  \\ +\sum_{i \geqslant 1} F \, \ad(t^{-2})^i \xi_1 + \sum_{i \geqslant 0} F \, \ad(t^2)^i t \xi_2 + \sum_{i \geqslant 1} F \, \ad(t^{-2})^i t \xi_2. \tag{27}
\end{align*}

Let \( \mathrm{Lie} \langle X \rangle \) be the free Lie superalgebra on \( x_1, \dots, x_8 \). Parities of the free generators \( x_1, \dots, x_8 \) correspond to parities of the generators (26).

Consider the homomorphism \( \mathrm{Lie} \langle X \rangle \xrightarrow{\varphi} L \) that maps the generators \( x_1, \dots, x_8 \) to the generators (26). We call an element \( r(x_1, \dots, x_8) \) a \textit{relation} if \(\varphi(r) = 0\).

We say that an element of \( \mathrm{Lie}\langle X \rangle \) has  degree \( \leqslant n \) if it is a linear combination of commutators in \( x_1, \dots, x_8 \) of length \( \leqslant n \).

\subsection{Relations in   \(L=K^{(2)}(F[t,t^{-1}]:2)\)} \mbox{}\\

Consider the following relations.

\begin{itemize}

\item  Let \(k = \pm 2\) or \(\pm 4\), \(m \geqslant 0\), \(\sigma = 1\) or \(-1\). Suppose that \(k+\sigma m\) has the same sign as \(\sigma\). Then there exist \(s \geqslant 0\) and a nonzero scalar \(\alpha_1(k,\sigma,m) \in F\) such that
\begin{equation} \tag{28}
\operatorname{ad}(t^k) \operatorname{ad}(t^\sigma \xi_1 \xi_2)^m \xi_1 = \alpha_1(k,\sigma,m) \operatorname{ad}(t^\sigma \xi_1 \xi_2)^s \xi_1
\end{equation}
The scalar \(\alpha_1(k,\sigma,m)\) can be found via an appropriate computation in the superalgebra \(L\), but it is irrelevant, \(s = |k+\sigma m|\).

The integer \(k+\sigma m\) and \(\sigma\) have different signs if and only if \(k = \pm 4\), \(\sigma = \mp 1\), \(0 \leqslant m \leqslant 3\) or \(k = \pm 2\), \(\sigma = \mp 1\), \(0 \leqslant m \leqslant 1\). In all cases the relations
\(
\operatorname{ad}(t^k) \operatorname{ad}(t^\sigma \xi_1 \xi_2)^m \xi_1 \in F \operatorname{ad}(t^\sigma \xi_1 \xi_2)^s \xi_1 \text{ have degrees } \leqslant 5.
\)

\item   Similarly, under the same condition there exists a scalar \( \alpha_2 (k, \sigma, n) \in F \)
such that
\[
\ad(t^k)\ad(t^{\sigma}\xi_1\xi_2)^n(t\xi_2) = \alpha_2(k, \sigma, n)\ad(t^{\sigma}\xi_1\xi_2)^s(t\xi_2)
;\tag{29}
\]

\item 
 The relations (28), (29) and relations of degree \( \leqslant 5 \) immediately imply
\[
\ad(t^{2\sigma})^n\xi_1 = \beta_1(\sigma, n)\ad(t^{\sigma}\xi_1\xi_2)^{2n}\xi_1, \quad 0 \neq \beta_1(\sigma, n) \in F, \sigma \pm 1, n \geqslant 0 ;
\tag{30}
\]
\[
\ad(t^{2\sigma})^n(t{\xi_2}) = \beta_2(\sigma, n)\ad(t^{\sigma}\xi_1\xi_2)^{2n}(t{\xi_2}). \tag{31}
\]

\item   For nonnegative integers \(m,n\) and \(\sigma,\tau = \pm 1\) we consider the relations
\[
[\ad(t^{2\sigma})^n(\xi_1\xi_2 t), \ad(t^{2
\tau})^m(\xi_1\xi_2 t)] = 0.
\tag{32}
\]

\item   For nonnegative integers \(m,n\) and \(\sigma,\tau = \pm 1\) we consider the relations \textcolor{.}{(in which \(
0 \neq \gamma_i(n, m, \sigma, \tau) \in F)
\)}
\[
[\ad(t^{\sigma}\xi_1\xi_2)^{2n}\xi_1, \ad(t^{\tau}\xi_1\xi_2)^{2m}\xi_1] = \gamma_1(n, m, \sigma, \tau)t^{2(\sigma{n}+\tau{m})};
\tag{33}
\]
\[
[\ad(t^{\sigma}\xi_1\xi_2)^{2n}(t{\xi_2}), \ad(t^{\tau}\xi_1\xi_2)^{2m}(t{\xi_2})] = \gamma_2(n, m, \sigma, \tau)t^{2(\sigma{n}+\tau{m}+1)}.
\tag{34}
\] For any nonnegative integers \(m,n\) and \(\sigma,\tau = \pm 1\) there exist \(\varepsilon = \pm 1\) and \(s \geqslant 0\) such that
\[
[\ad(t^{\sigma}\xi_1\xi_2)^{2n} \xi_1, \ad(t^{\tau}\xi_1\xi_2)^{2m}(t{\xi_2})] = \gamma_3(n, m, \sigma, \tau)\ad(t^{2\varepsilon})^{s}(t{\xi_1} \xi_2).
\tag{35}
\]

For \( k = \pm 4, \sigma = \pm 1, n \geqslant 0 \) there exist \( \varepsilon = \pm 1, s \geqslant 0 \)
such that
\[ 
[t^k, \ad(t^{2\sigma})^n (t\xi_1\xi_2)] = \mu(k, \sigma, n) \ad(t^{2\varepsilon})^s(t\xi_1\xi_2),
\tag{36}
\]
where \( 0 \neq \mu(k, \sigma, n) \in F. \)
\end{itemize}

\subsection{Presentation by Generators and Relations}\label{sec:5.8} \mbox{}\\
\mbox{}

Consider the Lie superalgebra \(\widetilde{L}\) presented by generators \( x_1, \ldots, x_8 \) and all relations in generators (26) of degree \( \leqslant 5 \). Abusing notation, the images of \( x_i\)'s in \(\widetilde{L}\) \textcolor{.}{are listed by} (26). In other words, from now on we only assume that the generators (26) satisfy all relations \textcolor{.}{(i.e. elements of \(\ker\varphi\))} of degree \( \leqslant 5 \).

\textbf{Claim:} Each of the relations (28)-(36) follows from (i) relations (28)-(36) of smaller degrees;  and (ii) relations of degree \( \leqslant 5 \). Hence, all the relations (28)-(36) hold in the Lie superalgebra \(\widetilde{L}\).

\textit{Note that the defining relations of the Virasoro algebra have degrees \(\leqslant 3\). (\cite{fairlie1988})}  Hence, the elements \(t^{\pm{2}}, t^{\pm{4}}\) generate the subalgebra in \(\widetilde{L}\) that is isomorphic to $\Vir(2)$.

\subsection{Verification of Relations} \mbox{}

We will verify the claim for each of the relations \((28)-(36)\).

\begin{itemize}
    \item Let us start with relations \((28)\). 
    
    Let \(k= \pm 2 \mbox{ or } \pm 4.\) We have \([[t^k, t^{\sigma}\xi_1\xi_2]\), \(t^{\sigma}\xi_1\xi_2] = 0\) (Since this is a relation of degree 3). Let $n\geq 2.$ To avoid relations of degree $\leqslant 5$, we also  assume that $n \geqslant 5.$ We have
\[
\ad(t^k) \ad(t^{\sigma}\xi_1\xi_2)^n \xi_1 = 2\ad(t^{\sigma}\xi_1\xi_2) \ad(t^k) \ad(t^{\sigma}\xi_1\xi_2)^{n-1} \xi_1
\]
\[
- \ad(t^{\sigma}\xi_1\xi_2)^2 \ad(t^k) \ad(t^{\sigma}\xi_1\xi_2)^{n-2} \xi_1 =
\]
\[
(2\alpha(k,\sigma,n-1) - \alpha(k,\sigma,n-2)) \ad(t^{\sigma}\xi_1\xi_2)^s \xi_1
\]

\item In view of the relations \((28)\) of smaller degrees, the claim for the relations \((29)\) is proved similarly.

\item The relations (30), (31) are particular cases of the relations (28), (29). 

\item Consider a relation  (32): using (32) of smaller degrees, we get
\[
\left[ \ad(t^{2\sigma})^n (\xi_1 \xi_2 t), \ad(t^{2\tau})^m (\xi_1 \xi_2 t) \right] = 
(-1)^m \left[ \ad(t^{2\tau})^m \ad(t^{2\sigma})^n (\xi_1 \xi_2 t), \xi_1 \xi_2 t \right].
\]

Suppose that \( \sigma \neq \tau; m \geqslant 1, n \geqslant 1 \). Using
\[
\ad(t^{2\tau}) \ad(t^{2\sigma}) = \ad(t^{2\sigma}) \ad(t^{2\tau}) \pm  4\ad(\mathbbm{1}),
\]
\[
\ad(\mathbbm{1}) \ad(t^k) = \ad(t^k) \ad(\mathbbm{1}) - k \ad(t^k),
\]
we either reduce the length of the product or move \( \ad(t^2) \ad(t^{-2}) \) to the right end and then use
\[
\ad(t^2) \ad(t^{-2}) (\xi_1 \xi_2 t) = -\xi_1 \xi_2 t.
\]

Assume \( m = 0 \) and consider
\[
\left[ \ad(t^2)^n (\xi_1 \xi_2 t), \xi_1 \xi_2 t \right], n \geqslant 2.
\] The case \( \sigma=-1 \) is similar. In the superalgebra \( L \)

\[
\ad(t^2)^n(\xi_1\xi_2 t) = -(2n-1) \ad(t^4) \ad(t^2)^{n-2} (\xi_1\xi_2 t) 
\] is a relation of type (36) of smaller degree. \footnote{\textcolor{.}{The following relation has degree $3$: \(
\ad(t^2)^2 (\xi_1\xi_2 t) = -3 \ad(t^4) (\xi_1\xi_2 t).
\)}}

Now, 
\begin{align*}
\left[ \ad(t^2)^n (\xi_1 \xi_2 t), \xi_1 \xi_2 t \right]&=\left[ \ad(t^2)^{\textcolor{.}{2}}\ad(t^2)^{n-2} (\xi_1 \xi_2 t), \xi_1 \xi_2 t \right]\\
&=\left[ \ad(t^2)^{n-2} (\xi_1 \xi_2 t), \ad(t^2)^2(\xi_1 \xi_2 t) \right]\\
&=-3\left[ \ad(t^2)^{n-2} (\xi_1 \xi_2 t), \ad(t^4)(\xi_1 \xi_2 t) \right].
\end{align*}

On the other hand, 
\begin{align*}
\left[ \ad(t^2)^n (\xi_1 \xi_2 t), \xi_1 \xi_2 t \right]&=-(2n-1)\left[ \ad(t^4)\ad(t^2)^{n-2} (\xi_1 \xi_2 t), \xi_1 \xi_2 t \right]\\
&=(2n-1)\left[\ad(t^2)^{n-2} (\xi_1 \xi_2 t), \ad(t^4)(\xi_1 \xi_2 t) \right], 
\end{align*} which implies \([\ad(t^2)^n(\xi_1\xi_2t), \xi_1\xi_2t] = 0\).

\item Among the relations (33), (34), (35) we will discuss only (33). The relations (34), (35) are treated similarly.

By relations (31) of smaller degree we have
\[
\ad(t^{\sigma}\xi_1\xi_2)^{2n}\xi_1 = \frac{1}{\beta_1(\sigma, n)} \ad(t^{2\sigma})^n\xi_1,
\]
\[
\ad(t^{\tau}\xi_1\xi_2)^{2m}\xi_1 = \frac{1}{\beta_1(\tau, m)} \ad(t^{2\tau})^m\xi_1.
\]

Consider the commutator
\[
[\ad(t^{2\sigma})^n\xi_1, \ad(t^{2\tau})^m\xi_1].
\]

If \(m \geqslant 1\) then this commutator is equal to
\[
[t^{2\tau}, [\ad(t^{2\sigma})^n\xi_1, \ad(t^{2\tau})^{m-1}\xi_1]] - [\ad(t^{2\tau}) \ad(t^{2\sigma})^n\xi_1, \ad(t^{2\tau})^{m-1}\xi_1].
\]

Now,
\[
[\ad(t^{2\sigma})^n\xi_1, \ad(t^{2\tau})^{m-1}\xi_1] =
\]
\[
\beta_1 (\sigma, n), \beta_1 (\tau, m-1) \gamma_1 (n, m-1, \sigma, \tau) t^{2(\sigma{n} + \tau(m - 1))}
\]
by the relations (30), (33) of smaller degree. Hence,
\[
[\ad(t^{2\sigma})^n \xi_1, \ad(t^{2\tau})^m \xi_1] \in (-1)^m [\ad(t^{2\tau})^m \ad(t^{2\sigma})^n \xi_1,\xi_1]
 + F t^{2(\sigma{n} + \tau{m})}.
\]

Suppose that \( \tau \neq \sigma\) and \( m \geqslant 1, n \geqslant 1 \). Arguing 
like we did for relations (32), we move \( \ad(t^2) \ad(t^{-2}) \) to the right side of the product and use
\[
\ad(t^2) \ad(t^{-2}) \xi_1 = -3 \xi_1.
\]

It remains to consider the case \( \sigma = \tau, m=0, n \geqslant \textcolor{.}{3} \): consecutively   applying the relations (30), (28) and again (30) of smaller degrees, we get
\[
\ad(t^{2\sigma})^n \xi_1 = \gamma \ad(t^{4\sigma}) \ad(t^{2\sigma})^{n-2} \xi_1, \gamma \in F.
\]

We also have \( \ad(t^{2\sigma})^2 \xi_1 = \gamma'\ad(t^{4\sigma}) \xi_1 \) , and \( \gamma + \gamma' \neq 0 \).

In view of the above,  
\[[\ad(t^{2\sigma})^n \xi_1, \xi_1] = [\ad(t^{2\sigma})^{n-2} \xi_1, \ad(t^{2\sigma})^2 \xi_1]\]  
\[= \gamma'[\ad(t^{2\sigma})^{n-2} \xi_1, \ad(t^{4\sigma}) \xi_1] \mod Ft^{2{\sigma}n}.\]

On the other hand,  
\[[\ad(t^{2\sigma})^n \xi_1, \xi_1] = r[\ad(t^{4\sigma}) \ad(t^{2\sigma})^n \xi_1, \xi_1]\]
\[= -\gamma[\ad(t^{2\sigma})^n \xi_1, \ad(t^{4\sigma}) \xi_1] \mod Ft^{2{\sigma}n}.\]

This implies  
\[[\ad(t^{2\sigma})^n \xi_1, \xi_1] \in Ft^{2{\sigma}n}.\]

This relation has degree $\leqslant 4$.

\item Now let us consider the relation (36).  
Let \( n = n_1 + n_2 \), \( n_i \geqslant 1 \).  

By (35),  
\[
\ad(t^{2\sigma})^{n} (t \xi_1 \xi_2) = \mu[\ad(t^{\sigma} \xi_1 \xi_2)^{n_1} \xi_1, \ad(t^{\sigma} \xi_1 \xi_2)^{n_2}(t \xi_2)]
\]
\[
0 \neq \mu \in F.
\]

Hence,
\[
[t^k, \ad(t^{2\sigma})^n (t\xi_1\xi_2)] = 
\]
\[
\mu \bigl[[t^k, \ad(t^{\sigma}\xi_1\xi_2)^{n_1}\xi_1],\ad(t^{\sigma}\xi_1\xi_2)^{n_2} (t{\xi_2})\bigr] + 
\]
\[
\textcolor{.}{\mu}\bigl[\ad(t^{\sigma}\xi_1\xi_2)^{n_1}\xi_1, [t^k, \ad(t^{\sigma}\xi_1\xi_2)^{n_2}(t{\xi_2})]\bigr]
\]
\end{itemize}

Now it remains to consequently apply the relations (28), (29), (35); this will complete our proof of the Claim. This results in:




\begin{theorem}\label{thm5.13manuscript}
    The superalgebra \( L = K^{(2)} (F[t,t^{-1}]:2) \) is presented by generators (26) and relations of degree \(\leqslant \textcolor{.}{5}\) .
\end{theorem}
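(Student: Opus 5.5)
Let \(\widetilde{L}\) be the Lie superalgebra presented by the generators (26) and all relations of degree \(\leqslant 5\), and let \(\psi:\widetilde{L}\to L\) be the natural epimorphism. The plan is to show that \(\psi\) is injective. To do this I would exhibit a spanning set of \(\widetilde{L}\) that is mapped bijectively onto a basis of \(L\).

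By the Claim of Section \ref{sec:5.8}, every relation (28)--(36) already holds in \(\widetilde{L}\). The Claim is proved by induction on degree. The base case is degree \(\leqslant 5\). In the inductive step, one peels off one \(\operatorname{ad}(t^k)\) or one \(\operatorname{ad}(t^{\sigma}\xi_1\xi_2)\) and applies the lower-degree relations, as done above for each family. Since the Virasoro relations have degree \(\leqslant 3\), the subalgebra \(\widetilde{\Vir}\) of \(\widetilde{L}\) generated by \(t^{\pm 2},t^{\pm 4}\) is a copy of \(\Vir(2)\). In particular, it is spanned by the elements \(t^{2k}\), \(k\in\mathbb{Z}\).

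Next I would define \(\widetilde{N}\subseteq \widetilde{L}\) to be the span of \(\widetilde{\Vir}\) together with the elements
\[
\operatorname{ad}(t^{\pm 2})^i(t\xi_1\xi_2),\quad \operatorname{ad}(t^{\pm 2})^i\xi_1,\quad \operatorname{ad}(t^{\pm 2})^i(t\xi_2),\qquad i\geqslant 0.
\]
These are exactly the preimages of the summands in (27). The map \(\psi\) sends these elements, up to nonzero scalars, to the monomials \(t^{2k}\), \(t^{2k+1}\xi_1\xi_2\), \(t^{2k}\xi_1\), \(t^{2k+1}\xi_2\). These monomials form a basis of \(L\). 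Hence if \(\widetilde{N}=\widetilde{L}\), then \(\psi\) is injective once we keep only one representative per exponent. Relation (36) and the \(\Vir(2)\) relations identify \(\operatorname{ad}(t^{2})^i\) and \(\operatorname{ad}(t^{-2})^j\) words producing the same exponent, up to scalars. Relations (30), (31) do the same for the odd elements. So \(\widetilde{N}\) has at most one spanning vector per monomial.

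It remains to show that \(\widetilde{N}\) is a subalgebra containing the generators (26). It then equals \(\widetilde{L}\). Closure under \(\operatorname{ad}(t^{\pm2})\) holds by construction. Closure under \(\operatorname{ad}(t^{\pm4})\) follows from (36) for the \(t\xi_1\xi_2\) part, and from (28)--(31) for \(\xi_1\) and \(t\xi_2\). Using (30), (31), the odd part of \(\widetilde{N}\) equals the span of \(\operatorname{ad}(t^{\sigma}\xi_1\xi_2)^{2n}\xi_1\) and \(\operatorname{ad}(t^{\sigma}\xi_1\xi_2)^{2n}(t\xi_2)\). Brackets among these odd elements are handled by (33), (34), (35), which land in \(\widetilde{\Vir}\) or in the \(t\xi_1\xi_2\) part. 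Brackets of \(t\xi_1\xi_2\)-type elements with each other vanish by (32).

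The remaining case, which I expect to be the main obstacle, is the brackets of \(t\xi_1\xi_2\)-type elements with odd elements, and with general \(t^{2k}\) (not just \(t^{\pm2},t^{\pm4}\)). The plan there is to write \(\operatorname{ad}(t^{2\sigma})^n(t\xi_1\xi_2)\) as a bracket of two odd elements using (35), as in the verification of (36). Then apply the Jacobi identity and reduce to the already-handled brackets of odd elements with \(t\xi_1\xi_2\)-type elements. Those are in turn controlled by (28), (29), because \(\operatorname{ad}(t^{\sigma}\xi_1\xi_2)\) shifts the power in those relations. Brackets with \(t^{2k}\) for \(|k|>2\) reduce to repeated \(\operatorname{ad}(t^{\pm2})\), via the Jacobi identity in \(\widetilde{\Vir}\). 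The delicate part is checking that no unwanted central terms survive in the \(\widetilde{\Vir}\) component. This should follow because (33), (34) fix these brackets exactly to the values they take in \(L\), and \(\widetilde{\Vir}\cong\Vir(2)\) is centerless.
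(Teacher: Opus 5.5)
Your route is the paper's own. You present $\widetilde L$ by all relations of degree $\leqslant 5$, use the Claim that (28)--(36) hold in $\widetilde L$, and show that the span modelled on (27) is a subalgebra on which $\psi$ is injective. The step that fails is the $\xi_2$-tower $\operatorname{ad}(t^{\pm 2})^i(t\xi_2)$, and you inherited it from (27) and (31) as printed.

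By (14) and the Ramond bracket, $[t^{p},t^{q}\xi_j]=(\tfrac{p}{2}-q)\,t^{p+q}\xi_j$ for $j=1,2$. This agrees with the paper's own $\operatorname{ad}(t^2)\operatorname{ad}(t^{-2})\xi_1=-3\xi_1$. Hence $[t^{2},t\xi_2]=0$ and $[t^{-2},t^{-1}\xi_2]=0$: the elements $t\xi_2,t^{-1}\xi_2$ span the two-dimensional module over the $\mathfrak{sl}(2)$ spanned by $t^{-2},1,t^{2}$. These are relations of degree $2$, so in $\widetilde L$ as well $\operatorname{ad}(t^{2})^i(t\xi_2)=0$ for $i\geqslant 1$ and $\operatorname{ad}(t^{-2})^i(t\xi_2)=0$ for $i\geqslant 2$. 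Consequently:
the $\xi_2$-part of your $\widetilde N$ maps only onto $Ft\xi_2+Ft^{-1}\xi_2$;
your assertion that $\psi$ hits every $t^{2k+1}\xi_2$ is false;
$\widetilde N$ is not closed under $\operatorname{ad}(t^{4})$, since $\psi([t^{4},t\xi_2])=t^{5}\xi_2\notin\psi(\widetilde N)$.
Likewise (31) cannot turn the odd part into the span of the $\operatorname{ad}(t^{\sigma}\xi_1\xi_2)^{2n}(t\xi_2)$. For $\sigma=1$ these are nonzero multiples of $t^{2n+1}\xi_2$, so $\beta_2(1,n)=0$ for every $n\geqslant 1$.

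The fix is to take as the odd part of $\widetilde N$ the span of $\operatorname{ad}(t^{\sigma}\xi_1\xi_2)^{m}\xi_1$, $\sigma=\pm 1$, $m\geqslant 0$. Their images are nonzero multiples of $t^{\sigma m}\xi_1$ ($m$ even) and $t^{\sigma m}\xi_2$ ($m$ odd), and these are exactly the elements appearing in (28), (29), (34), (35).
\begin{itemize}
\item Closure under $\operatorname{ad}(t^{\pm 2}),\operatorname{ad}(t^{\pm 4})$ is then (28) itself. Only the shape of (28) matters; the asserted $\alpha_1\neq 0$ actually fails, e.g.\ for $k=2$, $\sigma=1$, $m=1$.
\item Closure under $\operatorname{ad}(t^{\pm 1}\xi_1\xi_2)$ holds because these two operators commute in $\widetilde L$ ($[t\xi_1\xi_2,t^{-1}\xi_1\xi_2]=0$ has degree $2$), and $[t\xi_1\xi_2,[t^{-1}\xi_1\xi_2,\xi_1]]\in F^{\times}\xi_1$ has degree $3$.
\item The Claim's treatment of (34), (35) ``similarly to (33)'' would divide by $\beta_2$. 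It needs the same rerouting, e.g.\ through $t\xi_2=\pm[t\xi_1\xi_2,\xi_1]$.
\end{itemize}

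Finally, you can bypass what you call the main obstacle. Since $\widetilde N$ contains the generators and $\{x\in\widetilde L:[x,\widetilde N]\subseteq\widetilde N\}$ is a subalgebra, it suffices to check $[g,\widetilde N]\subseteq\widetilde N$ for the eight generators $g$ in (26). In particular you never need $\operatorname{ad}(t^{2k})$ for $|k|>2$. In any case that operator does not reduce to $\operatorname{ad}(t^{\pm 2})$ alone, because $t^{\pm 2}$ generate only an $\mathfrak{sl}(2)$.
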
 

\textbf{Proof:}  The relations (28) -- (36) imply that the subspace 
\[
\Vir(2) + \sum_{i \geqslant 0} \ad(t^2)^i (t\xi_1\xi_2) + \sum_{i \geqslant 1} \ad(t^{-2})^i (t\xi_1\xi_2)
\]
\[
+ \sum_{i \geqslant 0} F \ad(t^2)^i \xi_1 + \sum_{i \geqslant 1} F \ad(t^{-2})^i \xi_1 + \sum_{i \geqslant 0} F \ad(t^2)^i (t{\xi_2})
\]
\[
+ \sum_{i \geqslant 1}  F \ad(t^{-2})^i (t{\xi_2}),
\]
is a subsuperalgebra in \( \textcolor{.}{\widetilde{L}} \). Since this subsuperalgebra contains generators it follows that it is equal to \( \textcolor{.}{\widetilde{L}} \). Hence the homomorphism \( \textcolor{.}{\widetilde{L}} \rightarrow L \) is an isomorphism. It completes the proof of the theorem.


\section*{Acknowledgments}
The first author has been partially supported by the Spanish Ministry of Science and Universities, project MCIU-22-PID2021-123461NB-C22. The second author has been partially supported by NSFC Grant No. 12350710787 and the Guangdong Program 2023JC10X085.

\bibliographystyle{plain}
\bibliography{refs}

\end{document}